\providecommand{\U}[1]{\protect\rule{.1in}{.1in}}
\newtheorem{theorem}{Theorem}[section]
\newtheorem{corollary}[theorem]{Corollary}
\newtheorem{definition}[theorem]{Definition}
\newtheorem{lemma}[theorem]{Lemma}
\newtheorem{proposition}[theorem]{Proposition}
\theoremstyle{remark}
\newtheorem{remark}[theorem]{Remark}
\numberwithin{equation}{section}
\begin{document}

\title{Brown measure support and the free multiplicative Brownian motion}
\author{Brian C.\ Hall}
\address{Department of Mathematics \\
University of Notre Dame \\
Notre Dame, IN  46556}
\email{bhall@nd.edu}
\author{Todd Kemp}
\thanks{Kemp's research is supported in part by NSF CAREER Award DMS-1254807 and NSF Grant DMS-1800733}
\address{Department of Mathematics\\
University of California, San Diego \\
La Jolla, CA 92093-0112}
\email{tkemp@ucsd.edu}

\begin{abstract}
The free multiplicative Brownian motion $b_{t}$ is the large-$N$
limit of Brownian motion $B_t^N$ on the general linear group $\mathrm{GL}(N;\mathbb{C})$. We
prove that the Brown measure for $b_{t}$---which is an analog of the
empirical eigenvalue distribution for matrices---is supported on the closure
of a certain domain $\Sigma_{t}$ in the plane. The domain $\Sigma_t$ was introduced by Biane in the context
of the large-$N$ limit of the Segal--Bargmann transform associated to $\mathrm{GL}(N;\mathbb{C})$.

We also consider a two-parameter version, $b_{s,t}$: the large-$N$ limit of a related family of diffusion
processes on $\mathrm{GL}(N;\mathbb{C})$ introduced by the second author.  We show that the Brown
measure of $b_{s,t}$ is supported on the closure of a certain planar domain $\Sigma_{s,t}$,
generalizing $\Sigma_t$, introduced by Ho.

In the process, we introduce a new family of spectral domains related to any operator in a tracial von Neumann
algebra: the {\em $L^p_n$-spectrum} for $n\in\mathbb{N}$ and $p\ge 1$, a subset of the ordinary spectrum
defined relative to potentially-unbounded inverses.  We show that, in general, the support of the Brown
measure of an operator is contained in its $L_2^2$-spectrum.

\end{abstract}

\maketitle

\tableofcontents

\section{Introduction}

One of the core theorems in random matrix theory is the {\em circular law}. Suppose $C^N$ is an $N\times N$
complex matrix whose entries are independent centered normal random variables of variance
$\frac1N$.  Then the empirical eigenvalue distribution of $C^N$ (the random probability measure placing
points of equal mass at the eigenvalues) converges almost surely to the uniform probability measure on the unit disk
as $N\to\infty$. This theorem is due to Ginibre \cite{Gin} and $C^N$ is often
called a {\em Ginibre ensemble}. The circular law has been incrementally generalized to its strongest
form where the entries are independent but can have any distribution with two finite moments \cite{Bai,Girko,GT,TV}.

We can recast this as a theorem about {\em matrix-valued Brownian motion}.  In a finite-dimensional real Hilbert space,
there is a canonical Brownian motion, constructed by adding independent standard real Brownian
motions in all the directions of any orthonormal basis.  (See Section \ref{section Lie group BM} below.)
Let us regard the space $M_N(\mathbb{C})$ of complex $N\times N$ matrices as a real vector space of dimension $2N^2$, and equip it with the inner product
\begin{equation} \label{e.scaled.inner.prod}
\left\langle X,Y\right\rangle _{N}=N\operatorname{Re}\mathrm{Trace}(X^{\ast}Y).
\end{equation}
Then the associated Brownian motion $C^N_t$ has the same law as the Ginibre ensemble, scaled by a factor of $\sqrt{t}$.  (The factor $N$ in front of the Hilbert--Schmidt inner product is the correct choice to give the entries variance of order $1/N$.)
Hence, the matrix-valued Brownian motion $C^N_t$ has eigenvalues that concentrate uniformly in the disk of
radius $\sqrt{t}$ as $N\to\infty$.

In this paper, we are interested in the Brownian motion $B^N_t$ on the general linear group $\mathrm{GL}(N;\mathbb{C})$.  One nice
geometric way to define this object is by the {\em rolling map}. The tangent space to the identity in $\mathrm{GL}(N;\mathbb{C})$
(i.e.,\ the Lie algebra of this Lie group) is all of $M_N(\mathbb{C})$. Take the Brownian paths in the tangent space, and roll
them onto the group; this yields the paths of $B^N_t$.  Since the paths are not smooth this rolling 
is accomplished by a \textit{stochastic} differential equation for $B^N_t$ in terms of $C^N_t$ (cf.\ \eqref{e.BM.Lie.Group}).  In particular, for small 
time, $B^N_t$ and $C^N_t$ are "close", and so it is natural to expect that the eigenvalues of $B^N_t$ should follow a
deformation of the circular law of radius $\sqrt{t}$.

For a normal matrix $A$, the eigenvalues are encoded in the
matrix moments $\{\mathrm{Trace}[A^k(A^\ast)^{\ell}]\}_{k,\ell\in\mathbb{N}}$.  However, the ensemble
$C^N_t$ is almost surely non-normal for any $t>0$; in fact, a stronger statement is true: with probability $1$, $C^N_t$ is non-normal for all $t>0$\ \cite[Proposition 4.15]{KempLargeN}. The lack of normality presents significant hurdles to understanding the limit behavior of its
eigenvalues, whose connection to matrix moments is quite a bit more subtle.  Nevertheless, the $\ast$-moments of $C_t^N$ and $(C_t^N)^\ast$ (i.e.,\ traces of all words in these non-commuting matrices) do have a meaningful
large-$N$ limit: in the language of free probability, the ensemble $C^N_t$ {\em converges in $\ast$-distribution}
to an operator $c_t$, cf.\ \cite{Voiculescu} (see Section \ref{section large-N limits}).

This {\em circular Brownian motion} $c_t$, living in noncommutative probability space,
does not have eigenvalues, and is not normal, so it does not have a spectral resolution. 
Nevertheless, there is a construction, known as the \textbf{Brown measure} that reproduces the spectral distribution in the normal case but is also valid for non-normal operators. Each operator $a$ in a tracial von Neumann algebra has an associated Brown measure $\mu_a$, which is
a probability measure supported in the spectrum of $a$ in $\mathbb{C}$.  If $a$ is normal, $\mu_a$ is the usual
spectral measure inherited from the spectral theorem; if $A$ is an $N\times N$ matrix, its Brown measure is simply its empirical eigenvalue distribution.  We discuss the Brown
measure in general in Section \ref{section Brown measure} below.  Girko's proof \cite{Girko} of the general circular law
begins by proving that the Brown measure of $c_1$ is uniform on the unit disk, and then shows
that the empirical eigenvalue distribution of $C^N$ actually converges to the Brown measure of the large-$N$ limit.

Meanwhile, the Brownian motion $B_{t}^{N}$ on the group $\mathrm{GL}(N;\mathbb{C})$
also has a large-$N$ limit in terms of $\ast$-distribution:\ an operator $b_t$ known as the \textbf{free
multiplicative Brownian motion}.  It was introduced by Biane \cite{BianeFields,BianeJFA} and conjectured by him
to be the large-$N$ limit of $B_t^{N}$; this conjecture was proven by the second author in \cite{KempLargeN}.
The first step in understanding the large-$N$ behavior of the eigenvalues of $B_t^N$ is to determine
the Brown measure $\mu_{b_t}$ of $b_t$.  It is a probability measure supported in the spectrum of $b_t$; but $b_t$
is a complicated object, and in particular its spectrum is completely unknown.

In this paper, we identify a closed set $\overline{\Sigma}_t$ (see Section \ref{domains.sec}) which contains the support of the Brown measure $\mu_{b_t}$. The region $\Sigma_t$ was
introduced by Biane in \cite{BianeJFA} in the context of the {\em Segal--Bargmann transform} (or ``Hall transform'')
associated to the unitary group $\mathrm{U}(N)$ and its complexification $\mathrm{GL}(N;\mathbb{C})$
(cf.\ \cite{Ha1994}).  Biane introduced a  {\em free Hall transform} $\mathscr{G}_t$, which he understood as a sort
of large-$N$ limit of the Hall transform for $\mathrm{U}(N)$.  The transform $\mathscr{G}_t$ is an integral operator which
maps functions on the unit circle to a space
$\mathscr{A}_t$ of holomorphic functions on the region $\Sigma_t\subset\mathbb{C}$, whose definition
falls out of the complex analysis used in Biane's proofs.

The meaning of the region $\Sigma_t$ and its relation to the free multiplicative Brownian motion $b_t$ have remained mysterious.
One clue to its origin comes from the holomorphic functional calculus.  Using the metric properties of $\mathscr{G}_t$,
Biane showed that one can make sense of $F(b_t)$, as a possibly unbounded operator, for any $F\in\mathscr{A}_t$.
Now, if the spectrum of $b_t$ were contained in $\Sigma_t$, properties of the holomorphic functional calculus would
show that $F(b_t)$ is a {\em bounded} operator for all $F$ in $\mathcal{H}(\Sigma_t)$ and thus for all $F$ in $\mathscr{A}_t$, which is
(presumably) not the case.  On the other hand, the fact that $F(b_t)$ can be defined at all --- even as an unbounded
operator --- suggests that the spectrum of $b_t$ is at least contained in the {\em closure} of $\Sigma_t$.  Such a
results would then imply that the support of the Brown measure of $b_t$ is contained in $\overline{\Sigma}_t$.  The
latter statement is the main theorem of this paper.

\begin{theorem}
\label{main.thm}For all $t>0$, the support of the Brown measure $\mu_{b_{t}}$
of the free multiplicative Brownian motion $b_{t}$ is contained in
$\overline{\Sigma}_{t}$.
\end{theorem}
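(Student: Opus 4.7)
The strategy is to combine two tools, both introduced in the paper. First, the general inclusion announced in the abstract that the support of the Brown measure of any operator is contained in its $L_2^2$-spectrum; and second, Biane's free Hall transform $\mathscr{G}_t$ together with its associated functional calculus on $\mathscr{A}_t$. Granted the former, it suffices to prove that the $L_2^2$-spectrum of $b_t$ is itself contained in $\overline{\Sigma}_t$, i.e., to show that for every $\lambda \in \mathbb{C} \setminus \overline{\Sigma}_t$ the operator $b_t - \lambda$ admits a (possibly unbounded) two-sided inverse $R_\lambda$ affiliated to the ambient tracial von Neumann algebra, with both $R_\lambda$ and $R_\lambda^*$ satisfying the $L^2$-type trace-integrability built into the definition of the $L_2^2$-spectrum.

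\textbf{Construction of the inverse.} Fix such a $\lambda$. Since $\lambda$ sits at positive distance from $\overline{\Sigma}_t$, the function $f_\lambda(z) := (z - \lambda)^{-1}$ is holomorphic on an open neighborhood of $\overline{\Sigma}_t$ and in particular lies in $\mathscr{A}_t$. By Biane's theory this yields an affiliated operator $f_\lambda(b_t)$. The next task is to identify this with the resolvent of $b_t$ at $\lambda$: since $F \mapsto F(b_t)$ is a $\ast$-algebra homomorphism on polynomials (matching the ordinary functional calculus there), one expects it to extend to an algebra homomorphism from $\mathcal{H}$ of a neighborhood of $\overline{\Sigma}_t$ into closed affiliated operators. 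Lifting the pointwise identity $(z - \lambda) f_\lambda(z) = 1$ under this extension gives $(b_t - \lambda)\, f_\lambda(b_t) = 1 = f_\lambda(b_t)\,(b_t - \lambda)$. The $L_2^2$-regularity of $R_\lambda := f_\lambda(b_t)$ then follows from the boundedness of $f_\lambda$ on $\overline{\Sigma}_t$ via the metric properties of $\mathscr{G}_t$; in fact one expects $R_\lambda$ to be bounded outright, which gives $L^p_n$-regularity for every $p \geq 1$ and $n \in \mathbb{N}$.

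\textbf{Main obstacle.} The heart of the argument is to verify that Biane's analytically-defined calculus $F \mapsto F(b_t)$, constructed via the free Hall transform rather than via a classical holomorphic functional calculus, genuinely respects multiplication when one factor $b_t - \lambda$ is unbounded and the other $f_\lambda(b_t)$ is a priori only closed and affiliated. A careful extension argument is needed---for example, approximating $f_\lambda$ by polynomials or rational functions (Runge-style) on a neighborhood of $\overline{\Sigma}_t$ and controlling the resulting limits in an appropriate topology on affiliated operators---to ensure that the identity $f_\lambda(b_t)\,(b_t - \lambda) = 1$ holds as an equality of closed operators rather than merely on some dense core. Once this homomorphism-into-affiliated-operators property is established, the $L_2^2$-estimate on $R_\lambda$ is essentially automatic, and Theorem~\ref{main.thm} follows from the general Brown-measure/$L_2^2$-spectrum inclusion.
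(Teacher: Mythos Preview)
Your overall strategy matches the paper's: reduce to showing $\mathrm{spec}_2^2(b_t)\subset\overline{\Sigma}_t$ via the general Brown-measure/$L^2_2$-spectrum inclusion, and then use Biane's transform $\mathscr{G}_t$ to build the required inverses. However, the proposal has a genuine gap, and it misidentifies where the real work lies.

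The central unjustified step is the sentence ``$f_\lambda(z):=(z-\lambda)^{-1}$ is holomorphic on an open neighborhood of $\overline{\Sigma}_t$ and in particular lies in $\mathscr{A}_t$.'' The space $\mathscr{A}_t$ is \emph{not} $\mathcal{H}(\Sigma_t)$ or any space characterized by holomorphy alone; it is defined as $\mathrm{Image}(\mathscr{G}_t)\subset\mathcal{H}(\Sigma_t)$, and its norm is the transported $L^2(\partial\mathbb{D},\nu_t)$ norm, which is neither a sup norm nor an $L^2$ norm against any measure on $\Sigma_t$. So holomorphy of $f_\lambda$ near $\overline{\Sigma}_t$ does not by itself place $f_\lambda$ in $\mathscr{A}_t$, and boundedness of $f_\lambda$ on $\overline{\Sigma}_t$ says nothing about $\|f_\lambda(b_t)\|_{L^2}$. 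The paper's main technical work (Theorem~\ref{L2inv.thm}) is precisely to prove $f_\lambda\in\mathscr{A}_t$ by exhibiting its preimage under $\mathscr{G}_t$ explicitly: one computes, via the generating function of the transform, that $(\mathscr{G}_t)^{-1}(f_\lambda)(\omega)=\frac{f_t(\lambda)}{\lambda}\cdot\frac{1}{\omega-f_t(\lambda)}$, and this is in $L^2(\partial\mathbb{D},\nu_t)$ exactly because of the conformal-mapping fact (Proposition~\ref{ftMaps.prop}) that $f_t$ sends $\mathbb{C}\setminus\overline{\Sigma}_t$ into $\mathbb{C}\setminus\mathrm{supp}\,\nu_t$. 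This is where the geometry of $\Sigma_t$ actually enters, and your proposal bypasses it entirely.

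Two related points. First, your expectation that $R_\lambda$ is ``bounded outright'' is not established and is not claimed in the paper; indeed, the paper explicitly notes that for general $\lambda\notin\overline{\Sigma}_t$ one only obtains an $L^2$ inverse, not a bounded one. Without boundedness of $R_\lambda$ you cannot conclude $R_\lambda^2\in L^2$ from $R_\lambda\in L^2$ (the product of two $L^2$ elements is only in $L^1$), so the $L^2_2$ condition is not automatic---the paper handles this by separately showing $(z-\lambda)^{-n}\in\mathscr{A}_t$ for each $n$ via $\lambda$-differentiation of the explicit preimage. Second, the obstacle you flag (verifying $(b_t-\lambda)f_\lambda(b_t)=1$) is handled in the paper not by Runge approximation but by analytic continuation in $\lambda$ from the region $|\lambda|>\|b_t\|$, where the Neumann series gives the identity directly; since $b_t-\lambda$ is bounded, multiplication by it is continuous on $L^2$, and the continuation is routine. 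That part is comparatively easy; the substantive content you are missing is the explicit preimage computation tying $\Sigma_t$ to $\mathrm{supp}\,\nu_t$.
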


We expect that the large-$N$ limit of the empirical eigenvalue
distribution of the Brownian motion $B_{t}^{N}$ on $\mathrm{GL}(N;\mathbb{C})$ 
coincides with the Brown measure $\mu_{b_{t}}$ of the free multiplicative Brownian motion.
If that is the case, the eigenvalues of $B_{t}^{N}$ should concentrate in $\overline{\Sigma}_{t}$
for large $N$; this claim is supported by numerical evidence. Figure \ref{4evalplots.fig}
shows simulations of $B_{t}^{N}$ with $N=2000$ and four different values of $t$, plotted along
with the domains $\Sigma_{t}$. (The domain for $t\ge4$ has a small hole around
the origin, which can be seen in the bottom two images in the figure.)%

\begin{figure}[hptb]%
\centering
\includegraphics[scale=0.61]
{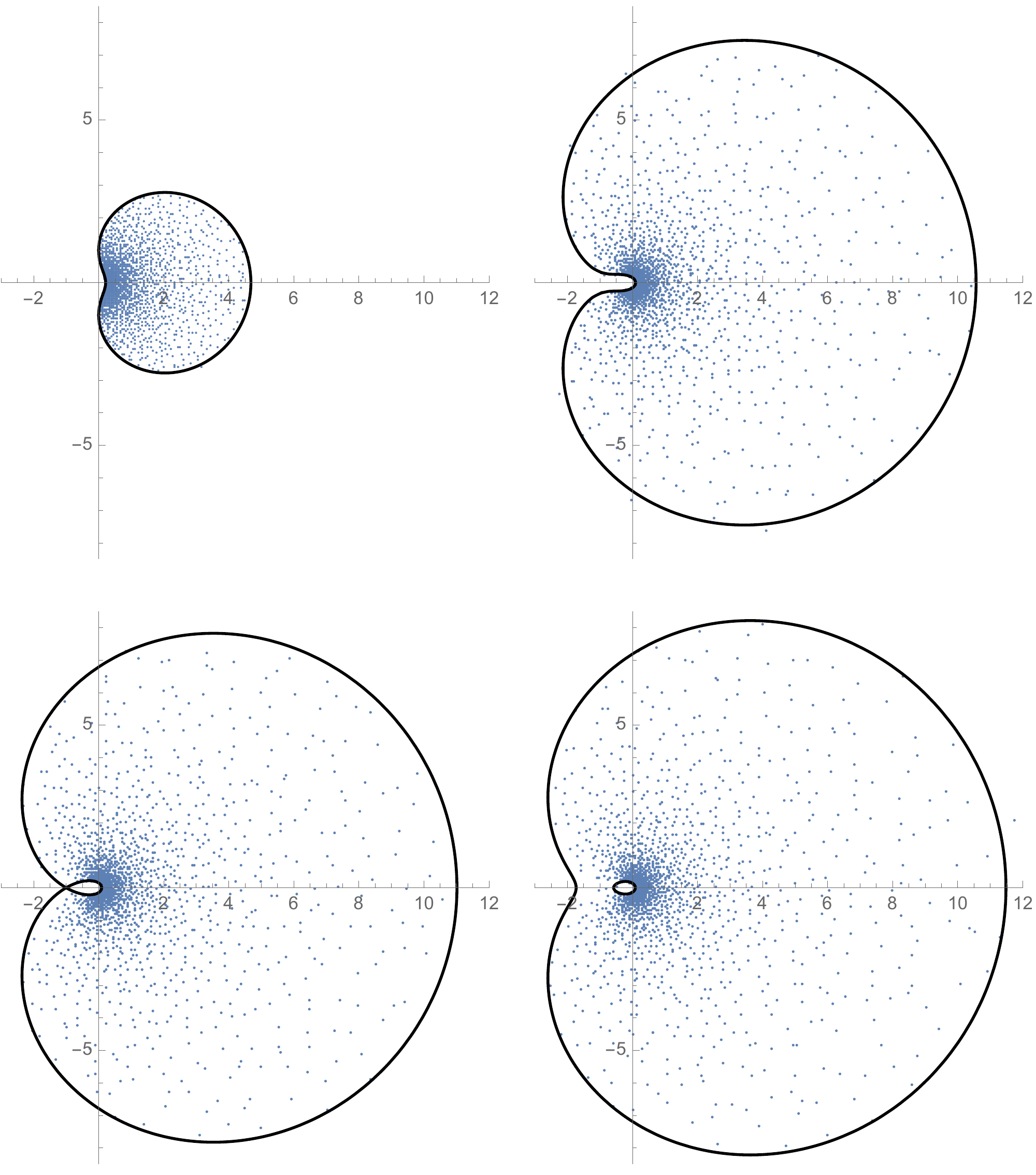}%
\caption{Simulations of the eigenvalues of $B_{t}^{N}$ shown with the domain
$\Sigma_{t}$, for $N=2000$ and $t=2$, $t=3.9$, $t=4$, and $t=4.1$}%
\label{4evalplots.fig}%
\end{figure}

We also consider a two-parameter version $b_{s,t}$ of the free multiplicative
Brownian motion and show that its Brown measure is supported on the closure of
a certain domain $\Sigma_{s,t}$, introduced by Ho \cite{Ho}. These domains similarly arise in
the large-$N$ limit of the two-parameter Segal--Bargmann transform in the Lie group
setting.  The precise statement and proof can be found in Section
\ref{twoParameter.sec}.


After this paper was submitted for publication, we became aware of two
papers in the physics literature that address the large-$N$ limit of
Brownian motion in $GL(N;\mathbb{C}).$ The first, by Gudowska-Nowak, Janik,
Jurkiewicz, Nowak \cite{Nowak} addresses only the one-parameter case (what
we call $B_{t}^{N}$). The second, by Lohmayer, Neuberger, and Wettig \cite%
{Lohmayer} addresses the full two-parameter case (what we call $B_{s,t}^{N}$
in Section 5). Using nonrigorous methods, both papers identify the region in
which the eigenvalues should live in the large-$N$ limit. The domains they
identify are precisely what we call $\Sigma _{t}$ (in the one-parameter case
in \cite{Nowak}) and what we call $\Sigma _{s,t}$ (in the two-parameter case
in \cite{Lohmayer}).

Our results are rigorous and use completely different methods from those in 
\cite{Nowak,Lohmayer}. Our approach also has the conceptual advantage of
connecting the Brown measure of the free multiplicative Brownian motion to
the previously known results on the distribution of the free \textit{unitary}
Brownian motion. Finally, we develop a general result about Brown measures
and the notion of $L_{2}^{2}$ spectrum.  (For more details on the relationship between
\cite{Nowak,Lohmayer} and our results, see Remark \ref{remark.physics} below.)


The strategy we employ to prove Theorem \ref{main.thm} is of independent interest, as it
provides a new restriction on the support of the Brown measure of an operator, in terms of a family of spectral
domains associated to the operator.
Let $(\mathcal{A},\tau)$ be a tracial von Neumann algebra and let $a\in\mathcal{A}$. As noted,
the Brown measure $\mu_a$ is 
supported in the spectrum of $a$; in fact, its support set may be a strict subset of the spectrum.
Recall that the spectrum of $a$ is the complement of the resolvent set of of $a$, which is the set
of $\lambda\in\mathbb{C}$ for which $a-\lambda$ is invertible (meaning that $(a-\lambda)^{-1}$ is a
bounded operator).  The rich structure of $\mathcal{A}$ and the trace $\tau$
give a natural generalization of these spaces: we may ask that the inverse
$(a-\lambda)^{-1}$ exist but not necessarily be bounded, instead insisting that it is in
$L^p(\mathcal{A},\tau)$ for some $p\ge1$. (The $p=\infty$ case coincides with the usual resolvent set.)
What is more, given the often bizarre algebraic properties of non-normal operators (which can, for example,
be nilpotent), we may ask that some power $(a-\lambda)^{n}$ have an inverse in $L^p(\mathcal{A},\tau)$.
(Unless $a$ is normal, this is {\em not} equivalent to $a-\lambda$ having an inverse in $L^{np}(\mathcal{A},\tau)$.)
The set of $\lambda\in\mathbb{C}$ for which $(a-\lambda)^n$ has an inverse in $L^p$ (and for which certain
uniform local bounds hold) is called the {\bf $L^p_n$-resolvent set} of $a$, and its
complement is $\mathrm{spec}^p_n(a)$, the {\bf $L^p_n$-spectrum} of $a$.

Our key observation leading to the proof of Theorem \ref{main.thm} is the following new description of (a closed 
set containing) the support of the Brown measure.

\begin{theorem} \label{thm.L22.spec} For any operator $a$ in a tracial von Neumann algebra, the support of the Brown measure $\mu_a$ is contained in $\mathrm{spec}^2_2(a)$, which is a subset of the spectrum of $a$. \end{theorem}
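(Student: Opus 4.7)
The plan is to realize the Brown measure by the standard smoothing and show that the $L^2_2$-resolvent condition makes the resulting distributional Laplacian vanish. Recall that $L(\lambda) = \tau(\log|a-\lambda|)$ is subharmonic on $\mathbb{C}$ and $\mu_a = \frac{1}{2\pi}\Delta L$ in the sense of distributions. I would work with the smooth regularizations
\[
L_\epsilon(\lambda) = \tfrac12\,\tau\log\bigl((a-\lambda)^*(a-\lambda)+\epsilon^2\bigr), \qquad \epsilon > 0,
\]
which decrease pointwise, and in $L^1_{\mathrm{loc}}(\mathbb{C})$, to $L$ as $\epsilon\downarrow 0$.

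A direct computation using $\Delta = 4\,\partial_\lambda\bar\partial_\lambda$, cyclicity of $\tau$, and the intertwining $(a-\lambda)X_\epsilon = Y_\epsilon(a-\lambda)$, yields the Hermitization identity
\[
\Delta L_\epsilon(\lambda) = 2\epsilon^2\,\tau\bigl(X_\epsilon^{-1}Y_\epsilon^{-1}\bigr), \qquad X_\epsilon = (a-\lambda)^*(a-\lambda)+\epsilon^2,\ \ Y_\epsilon = (a-\lambda)(a-\lambda)^*+\epsilon^2.
\]
The key step is the estimate
\[
\tau\bigl(X_\epsilon^{-1}Y_\epsilon^{-1}\bigr) \;\leq\; \bigl\|(a-\lambda)^{-2}\bigr\|_2^2,
\]
which I would establish by combining the operator-monotonicity bounds $X_\epsilon^{-1} \leq ((a-\lambda)^*(a-\lambda))^{-1}$ and $Y_\epsilon^{-1} \leq ((a-\lambda)(a-\lambda)^*)^{-1}$ with cyclicity of $\tau$ and the identity $((a-\lambda)^*(a-\lambda))^{-1}((a-\lambda)(a-\lambda)^*)^{-1} = (a-\lambda)^{-1}((a-\lambda)^*)^{-2}(a-\lambda)^{-1}$, valid whenever $a-\lambda$ has trivial kernel (as is forced by the existence of $(a-\lambda)^{-2}$ in $L^2$).

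Granting this estimate, suppose $\lambda_0 \notin \mathrm{spec}^2_2(a)$. The uniform local bound built into the definition of the $L^2_2$-resolvent set then furnishes a neighborhood $U$ of $\lambda_0$ on which $\|(a-\lambda)^{-2}\|_2 \leq M < \infty$, and so $0 \leq \Delta L_\epsilon(\lambda) \leq 2\epsilon^2 M^2$ uniformly for $\lambda \in U$. For any test function $\phi \in C_c^\infty(U)$, integration by parts together with $L_\epsilon \to L$ in $L^1_{\mathrm{loc}}$ gives
\[
\langle\mu_a,\phi\rangle \;=\; \tfrac{1}{2\pi}\lim_{\epsilon\to 0}\int_{\mathbb{C}} \Delta L_\epsilon(\lambda)\,\phi(\lambda)\,d^2\lambda \;=\; 0,
\]
whence $\mu_a|_U = 0$ and $\lambda_0 \notin \mathrm{supp}(\mu_a)$. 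The containment $\mathrm{spec}^2_2(a) \subseteq \mathrm{spec}(a)$ is elementary: if $(a-\lambda_0)^{-1}$ is bounded then so is $(a-\lambda_0)^{-2}$, which then lies in $L^2$ with $L^2$-norm at most its operator norm, and continuity of inversion in the Banach algebra $\mathcal{A}$ supplies the required local uniform bounds.

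The main obstacle is making the inequality $\tau(X_\epsilon^{-1}Y_\epsilon^{-1}) \leq \|(a-\lambda)^{-2}\|_2^2$ rigorous when $(a-\lambda)^{-1}$ is unbounded: the right-hand side must then be interpreted within the $L^2$-calculus of operators affiliated with $\mathcal{A}$, and the operator-monotone comparisons and cyclic trace manipulations must be justified in that setting. This should follow from standard $L^p$-theory for $\tau$-measurable operators via bounded spectral truncations of $(a-\lambda)^{-1}$ and monotone convergence for the trace, but the bookkeeping is the technical heart of the argument.
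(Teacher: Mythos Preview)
Your proposal is correct and takes essentially the same approach as the paper: both reduce to the key estimate $\tau[(a_\lambda^*a_\lambda+\varepsilon)^{-1}(a_\lambda a_\lambda^*+\varepsilon)^{-1}]\le\|a_\lambda^{-2}\|_2^2$ and then appeal to the limiting formula \eqref{limitFormula2} for $\mu_a$. The paper carries out that estimate in two separate conjugation steps (first by $(a^*a+\varepsilon)^{1/2}$, then by $a$), together with the lemma that $0\le x\le y$ with both invertible in $L^1$ implies $\tau(y^{-1})\le\tau(x^{-1})$, rather than applying the two operator-monotone bounds simultaneously---this two-step structure is precisely the unbounded-operator bookkeeping you correctly flag as the technical heart.
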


A detailed discussion of these generalized spectral domains, and the proof of Theorem \ref{thm.L22.spec},
can be found in Section \ref{section L22}.  Once this result is established, we use Biane's
free Hall transform $\mathscr{G}_{t}$ to show that
$\mathrm{spec}_2^2(b_t) = \overline{\Sigma}_t$, which proves that the support of $\mu_{b_t}$
is contained in $\overline{\Sigma}_t$, establishing Theorem \ref{main.thm}.

A more detailed version of this outline of the proof is contained in Section \ref{outline.sec}, (cf.\ Theorems \ref{BrownAndL2Spec.thm} and \ref{L2Inverse.thm}); the complete proofs can then be found in Section \ref{proofs.sec}.

\section{Preliminaries}

In this section, we provide background on the objects in the statement of our
main theorem---the free multiplicative Brownian motion, the Brown measure, and
the domains $\Sigma_{t}$.

\subsection{Lie group Brownian motions and their large-$N$ limits\label{section SDEs}}
\subsubsection{Lie group Brownian motions\label{section Lie group BM}}

Let $H$ be a finite-dimensional real Hilbert space.  The {\em Brownian motion on $H$}, $W^H_t$, is the diffusion
process defined by
\begin{equation} \label{e.BM.Hilbert} W^H_t = \sum_{j=1}^d B^j_t e_j \end{equation}
where $\{e_1,\ldots,e_d\}$ is an orthonormal basis for $H$, and $\{B^j_t\}_{j=1}^d$ are i.i.d.\ standard (real)
Brownian motions.  The law of this process is invariant under rotations, and hence does not depend on which
orthonormal basis is chosen.

Let $G\subset\mathrm{GL}(N;\mathbb C)$ be a matrix Lie group (in $M_N(\mathbb{C})$), and let $\mathfrak{g}\subset M_N(\mathbb{C})$ be its Lie algebra.
A choice of inner product on $\mathfrak{g}$ induces a left-invariant Riemannian metric on $G$.  As a Riemannian manifold, then, $G$ has a
well-defined Brownian motion: the diffusion with infinitesimal generator given by half the Laplacian. In the Lie group setting
there is a simple description of the Brownian motion $B^G_t$ in terms of the Brownian motion $W^{\mathfrak{g}}_t$ on the Lie algebra (as in \eqref{e.BM.Hilbert}):
\begin{equation} \label{e.BM.Lie.Group} dB^G_t = B^G_t\circ dW^{\mathfrak{g}}_t, \qquad B^G_0 = I. \end{equation}
The $\circ$ denotes the Stratonovich stochastic integral.  This Stratonovich SDE can be converted to an It\^o SDE; the form of the resulting equation depends on the structure of the group (cf.\ \cite[p.\ 116]{McKean}).


For our purposes, the two relevant Lie groups are the unitary group $\mathrm{U}(N)$ whose Lie algebra is $\mathfrak{u}(N) = M_N^{\mathrm{s.a.}}(\mathbb{C})$ (self-adjoint matrices), and the general linear group
$\mathrm{GL}(N;\mathbb{C})$ whose Lie algebra consists of all complex matrices $\mathfrak{gl}(N;\mathbb{C}) = M_N(\mathbb{C})$. (In the unitary case, we follow the physicists' convention; mathematicians typically use skew-self-adjoint matrices.)
Using the inner product \eqref{e.scaled.inner.prod} on both these Lie algebras, we obtain  Brownian motions which we will denote thus:
\[ W^{\mathfrak{u}(N)}_t = X^N_t \qquad \text{and} \qquad W^{\mathfrak{gl}(N;\mathbb{C})}_t = C^N_t. \]
To be more explicit, $C^N_t$ is the {\em Ginibre Brownian motion}, which has i.i.d.\ entries that are all complex Brownian motions of variance $t/N$, and $X^N_t$ is the {\em Wigner Brownian motion}, which is Hermitian with i.i.d.\ upper triangular entries, with complex Brownian motions above the diagonal and real Brownian motions on the diagonal, each of variance $t/N$.  

The Brownian motions on the groups, which we denote $B^{\mathrm{U}(N)}_t = U^N_t$ and $B^{\mathrm{GL}(N;\mathbb{C})}_t = B^N_t$, then satisfy Stratonovich SDEs given by \eqref{e.BM.Lie.Group}. These equations can be written in It\^o form as follows:
\begin{equation} \label{e.UB.SDEs} dU^N_t = iU^N_t\,dX^N_t -\textstyle{\frac12}U^N_t\,dt \qquad \text{and} \qquad dB^N_t = B^N_t\,dC^N_t \end{equation}
(both started at the identity matrix).  These defining SDEs play the role of the rolling map described in the introduction.

\subsubsection{The large-$N$ limits\label{section large-N limits}}
The four processes $X^N_t$, $C^N_t$, $U^N_t$, and $B^N_t$ all have large-$N$ limits in the sense of free probability
theory.  (For a thorough introduction to free probability theory and its connection to random matrix theory,
the reader is directed to \cite{MS} and \cite{NS}.) The
limits are one-parameter families of operators $x_t$, $c_t$, $u_t$, and $b_t$ all living in a noncommutative
probability space $(\mathcal{B},\tau)$. (More precisely, $\mathcal{B}$ is a finite von Neumann algebra and $\tau$ is a faithful, normal, tracial state.)
 The sense of convergence is
{\em almost sure convergence of the finite-dimensional noncommutative distributions}, defined as follows.

\begin{definition} \label{def.limit.process} Let $A^N_t$ be a sequence of $M_N(\mathbb{C})$-valued stochastic
processes (all defined on the same sample space).  Let $(\mathcal{A},\tau)$ be a noncommutative probability space,
and let $a_t\in\mathcal{A}$ for each $t>0$.  We say {\bf $A^N_t$ converges to $a_t$ in finite-dimensional
noncommutative distributions} if, for each
$n\in\mathbb{N}$ and times $t_1,\ldots,t_n\ge0$, and each noncommutative polynomial $P$ in $2n$ indeterminates, then almost surely
\[ \lim_{N\to\infty}\textstyle{\frac1N}\mathrm{Trace}\!\left[P(A^N_{t_1},\ldots,A^N_{t_n},(A^N_{t_1})^\ast,\ldots,(A^N_{t_n})^\ast)\right]
= \tau\!\left[P(a_{t_1},\ldots,a_{t_n},a_{t_1}^\ast,\ldots,a_{t_n}^\ast)\right]. \]
\end{definition}
The limit of $X^N_t$ was identified by Voiculescu \cite{Voiculescu}; it is known as {\bf free additive Brownian motion} $x_t$,
and can be constructed on a Fock space.  From here, one can derive the $C^N_t$ case by noting that
$C^N_t = \frac{1}{\sqrt{2}}(X^N_t + iY^N_t)$ where $Y^N_t$ is an independent copy
of $X^N_t$.  Given the independence and rotational invariance, it follows by standard results on asymptotic freeness 
that the large-$N$ limit of $C^N_t$ can be represented as $c_t = \frac{1}{\sqrt{2}}(x_t + iy_t)$ where $\{x_t,y_t\}$ are
{\em freely} independent free additive Brownian motions; we call $c_t$ a {\bf free circular Brownian motion}.

Since the unitary and general linear Brownian motions $U^N_t$ and $B^N_t$ are defined as solutions of SDEs involving
$X^N_t$ and $C^N_t$, good candidates for their large-$N$ limits are given by {\em free} SDEs involving
$x_t$ and $c_t$. (Free stochastic analysis was introduced in \cite{BS1} and further developed in \cite{BS2,KNPS}; the reader
may also consult the background sections of \cite{CK,KempLargeN} for succinct summaries of the relevant concepts.)  The {\bf free unitary Brownian motion} $u_t$ and {\bf free multiplicative Brownian motion} $b_t$ are defined
as solutions to the free SDEs
\begin{equation} \label{e.free.SDEs} du_t = iu_t\,dx_t - \textstyle{\frac12}u_t\,dt \qquad \text{and} \qquad db_t = b_t\,dc_t \end{equation}
(both started at $1$), mirroring the matrix SDEs of \eqref{e.UB.SDEs}.

The free unitary Brownian motion $u_t$ was introduced by Biane in \cite{BianeFields}, wherein he also showed that
it is the large-$N$ limit of the unitary Brownian motion $U^N_t$ as a process (as in Definition \ref{def.limit.process}).
In particular, in the case of a single time $t$ ($n=1$ in the definition, $t_1=t$), since $u_t^\ast = u_t^{-1}$, the statement
is simply that the trace moments $\frac1N\mathrm{Trace}[(U_t^N)^k]$, $k\in\mathbb{Z}$, converge almost surely
as $N\to\infty$ to $\tau[u_t^k]$. The numbers $\nu_k(t):=\tau[u_t^k]$, meanwhile, are the moments of a probability measure $\nu_t$ on the unit circle.
Biane computed these limit moments, which had already appeared in work of Singer \cite{Singer} in Yang--Mills theory
on the plane in an asymptotic regime.  They are given by
\begin{equation}
\nu_{k}(t):=\int_{\partial\mathbb{D}}\omega^{k}~\nu_{t}(d\omega)=e^{-\frac{\left\vert
k\right\vert }{2}t}\sum_{j=0}^{\left\vert k\right\vert -1}\frac{(-t)^{j}}%
{j!}\left\vert k\right\vert ^{j-1}\binom{\left\vert k\right\vert }{j}\label{nutMoments}%
\end{equation}
for $n\in\mathbb{Z}\setminus\{0\}$, with $\nu_0(t) =1$.

From here, using complex analytic techniques, Biane
completely determined the measure $\nu_t$.  For $t>4$, the support of $\nu_t$ is the whole unit circle while for $t<4$ its support is the following arc:
\begin{equation} \label{e.supp.nut} \mathrm{supp}\,\nu_t = \left\{e^{i\theta}\colon |\theta| \le \frac12\sqrt{t(4-t)}+\arccos\left(1-\frac{t}{2}\right)\right\},\quad t<4. \end{equation}
Biane also gave an implicit description of the measure $\nu_t$, which has a real analytic density on the interior of its
support, but we do not need this description presently.  

The key to analyzing $\nu_t$
was determining a certain analytic transform of $\nu_t$ in the unit disk $\mathbb{D}$.  Let
\[ \psi_t(z) := \int_{\partial\mathbb{D}} \frac{\omega z}{1-\omega z}\,\nu_t(d\omega), \qquad z\in\mathbb{D} \]
denote the moment generating function (with no constant term).  The function $\psi_t$ has a continuous extension to
the closed disk $\overline{\mathbb{D}}$; this is tantamount to the fact, as Biane proved, that $\nu_t$ possesses a
continuous density on $\partial\mathbb{D}$. Of greater computational use is the following function:
\begin{equation} \label{e.chit} \chi_t(z) := \frac{\psi_t(z)}{1+\psi_t(z)} \end{equation}
which also has a continuous extension to $\overline{\mathbb{D}}$.  In fact, $\chi_t$ is one-to-one on the open disk, and
its inverse is analytic, with the following simple explicit formula:
\begin{equation} \label{e.ft} f_t(z) := \chi_t^{\langle -1\rangle}(z) = ze^{\frac{t}{2}\frac{1+z}{1-z}}. \end{equation}
It is from this identity that the explicit formulas \eqref{nutMoments} and \eqref{e.supp.nut} are derived.

Biane also introduced the free multiplicative Brownian motion process $b_t$ in \cite{BianeJFA}, where he conjectured
that it is the large-$N$ limit of the $\mathrm{GL}(N;\mathbb{C})$ Brownian motion $B^N_t$.  Given the non-normality of
the matrices and operators involved, this turned out to be a difficult problem that took nearly two decades to solve;
the second author proved this in \cite{KempLargeN}.  Now, the ``holomorphic'' moments $\tau[b_t^k]$ are easily shown to
have the value $1$ for all $k$ (see also \eqref{bstMoments}). But since the process $b_t$ is not normal, these moments do not determine much
of the noncommutative distribution.  The free SDE \eqref{e.free.SDEs} that defines $b_t$ allows for any mixed
moment in $b_t$ and $b_t^\ast$ to be computed (iteratively) given enough patience; see \cite[Proposition 1.8]{KempLargeN}
for some notable examples.  There is, at present, no known simple description of the full noncommutative
distribution of this complicated object.  Its spectrum is also unknown.

\subsection{The domains $\Sigma_{t}$\label{domains.sec}}%

\begin{figure}[htpb]%
\centering
\includegraphics[scale=0.6]
{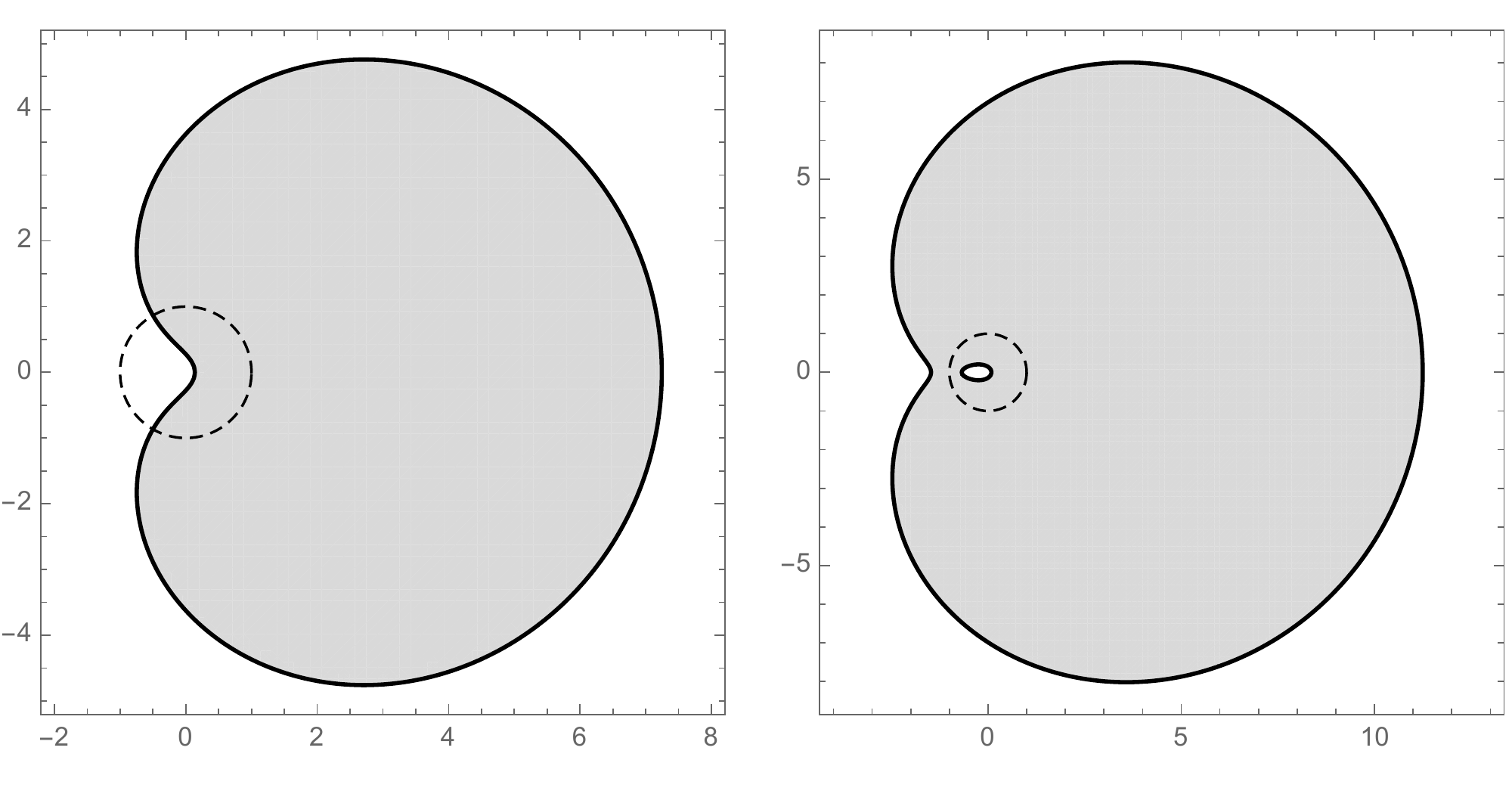}%
\caption{The domains $\Sigma_{t}$ with $t=3$ and $t=4.05$, with the unit
circle (dashed) shown for comparison}%
\label{t3withcircle.fig}%
\end{figure}

In this section, we describe the family of domains $\Sigma_{t}\subset
\mathbb{C}$, introduced by Biane in \cite{BianeJFA}, which enter into the
statement of our main result. They arose in the context of the free Segal--Bargmann
transform (see Section \ref{BianesTransform.sec}), which connects $u_t$ and $b_t$.
For this reason, they are related to the function $f_t$ in \eqref{e.ft}, which is the inverse of the shifted moment generating function of the spectral measure $\nu_t$ of the free unitary Brownian motion.

It is easily verified that if $\left\vert z\right\vert =1$, then $\left\vert
f_{t}(z)\right\vert =1$. There are, however, points $z$ with $\left\vert
z\right\vert\ne 1$ for which $\left\vert f_{t}(z)\right\vert $
is nevertheless equal to 1.

\begin{proposition}
For all $t>0$, consider the set%
\[
E_{t}=\overline{\left\{  z\in\mathbb{C}\left\vert \left\vert z\right\vert
\neq1,~\left\vert f_{t}(z)\right\vert =1\right.  \right\}  }%
\]
and define $\Sigma_t$ to be the connected component of the complement of $E_t$ containing 1. Then $\Sigma_t$ is bounded for all $t>0$, $\Sigma_t$ is simply connected for $t\leq 4$, and $\Sigma_t$ is doubly connected for $t>4$. In all cases, we have 
\[
\partial\Sigma_{t}=E_{t}.
\]

\end{proposition}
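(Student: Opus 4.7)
The plan is to pass to polar coordinates and reduce the level-set equation to one variable. Writing $|f_t(z)|^2 = |z|^2\exp\bigl(t\,\mathrm{Re}\tfrac{1+z}{1-z}\bigr) = |z|^2\exp\bigl(t\tfrac{1-|z|^2}{|1-z|^2}\bigr)$ and using $|1-re^{i\theta}|^2 = 1 - 2r\cos\theta + r^2$, the condition $|f_t(re^{i\theta})| = 1$ with $r \ne 1$ becomes $\cos\theta = h_t(r)$, where
\[
h_t(r) = \frac{1+r^2}{2r} + \frac{t(1-r^2)}{4r\log r}.
\]
One checks the boundary behavior $h_t(r) \to +\infty$ as $r \to 0^+$ or $r \to \infty$, and (by L'H\^opital) $h_t(r) \to 1 - t/2$ as $r \to 1^{\pm}$. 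The value $1-t/2$ lies in $(-1,1)$ iff $t < 4$, which is why $t = 4$ is the transition. In parallel I compute the critical points of $f_t$: the zeros of $(1-z)^2 + tz$, i.e.\ $z_\pm = (2-t)/2 \pm \sqrt{t(t-4)}/2$, which form a conjugate pair $e^{\pm i\theta_c}$ on $\partial\mathbb{D}$ (with $\cos\theta_c = 1-t/2$) when $t < 4$, merge to a double critical point at $-1$ when $t = 4$, and become two negative reals of product $1$ when $t > 4$. Since $\log|f_t|$ is harmonic off $\{0,1\}$, these are exactly the points at which the level set $\{|f_t|=1\}$ can fail to be a smooth $1$-manifold.

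Two monotonicity inputs on the real axis are needed. On each of $(0,1)$ and $(1,\infty)$, $r \mapsto f_t(r) = r\exp\bigl(\tfrac{t}{2}\tfrac{1+r}{1-r}\bigr)$ has derivative $e^{\cdots}[(1-r)^2 + tr]/(1-r)^2 > 0$, is monotone increasing, and sweeps out $(0,+\infty)$; so $f_t(r) = 1$ has unique solutions $r^* \in (0,1)$ and $r^{**} \in (1,\infty)$, the only real positive points of $E_t$. On the negative real axis, $r \mapsto |f_t(-r)| = r\exp\bigl(\tfrac{t}{2}\tfrac{1-r}{1+r}\bigr)$ has logarithmic derivative $\tfrac{1}{r} - \tfrac{t}{(1+r)^2}$, vanishing iff $r^2 + (2-t)r + 1 = 0$. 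For $t < 4$ this has no real root, so $|f_t(-r)|=1$ only at $r=1$; for $t > 4$ the roots are reciprocal critical points $r_- < 1 < r_+$, and $|f_t(-r)|=1$ acquires two extra solutions $r^{(1)} \in (0, r_-)$ and $r^{(3)} \in (r_+,\infty)$. Combined with the polar reduction, this pins down the intersections of $E_t$ with the real axis and forces $h_t > -1$ on $(0,1)\cup(1,\infty)$ whenever $t < 4$ (otherwise IVT would produce an extra solution to $|f_t(-r)|=1$).

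For $t < 4$, each simple critical point $e^{\pm i\theta_c}$ contributes two smooth branches of $\{|f_t|=1\}$ crossing transversally, one being the unit circle and the other a ``non-trivial'' branch passing into and out of $\mathbb{D}$. Since $h_t > -1$ off $\partial\mathbb{D}$, the non-trivial branch cannot reach the negative real axis, so the interior branches from $e^{\pm i\theta_c}$ must link up at the unique interior real-axis point $r^*$, and the exterior branches at $r^{**}$. This yields a single smooth Jordan curve $E_t$ threading $e^{i\theta_c}, r^*, e^{-i\theta_c}, r^{**}$ in cyclic order; the Jordan curve theorem identifies $\Sigma_t$ as the bounded component of its complement (which contains $1$, since the segment $[r^*, r^{**}]$ lies in a single component), giving simple connectedness and $\partial\Sigma_t = E_t$. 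For $t > 4$ the same tracing now closes off separately inside and outside: the interior branch is a Jordan curve through $r^*$ and $-r^{(1)}$ encircling $0$, and the exterior branch is a Jordan curve through $r^{**}$ and $-r^{(3)}$ encircling $\overline{\mathbb{D}}$. Thus $E_t$ is a disjoint union of two Jordan curves and $\Sigma_t$ is the doubly connected annular region between them.

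The main obstacle is the transitional case $t = 4$. Here $f_4'(-1) = f_4''(-1) = 0$ but $f_4'''(-1) \ne 0$, so $f_4(z) + 1 \sim c(z+1)^3$ locally, and the preimage of the unit circle near $-1$ consists of three smooth branches (the unit circle plus two others crossing transversally at $-1$ at angles $\pm \pi/6$ off the vertical tangent). Globally these two extra branches form a curve with a single self-intersection at $-1$ --- topologically a figure-$8$, with one loop through $r^*$ inside $\mathbb{D}$ and one through $r^{**}$ outside. The complement has three components; the one containing $1$ is a ``pinched annulus'' $\Sigma_4$ whose inner and outer boundary loops touch at the single point $-1$. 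Simple connectedness of this pinched annulus is not automatic, but follows from the standard criterion that an open connected $U \subset \mathbb{C}$ is simply connected iff $\widehat{\mathbb{C}}\setminus U$ is connected: the complement of $\Sigma_4$ in $\widehat{\mathbb{C}}$ is (closed inner region) $\cup$ (closed outer region including $\infty$), and both pieces contain the pinch point $-1$, so their union is connected. Finally, boundedness of $\Sigma_t$ for every $t > 0$ follows from $|f_t(z)| \sim |z|e^{-t/2} \to \infty$ as $|z| \to \infty$, so the outer curve of $E_t$ separates $1$ from $\infty$; and $\partial\Sigma_t = E_t$ is immediate from the Jordan-curve (or figure-$8$) description in each case.
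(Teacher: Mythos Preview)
The paper itself does not give a proof of this proposition: immediately after stating it, the authors write that ``these properties of the region $\Sigma_t$ were proved by Biane in \cite{BianeJFA}; see especially pp.\ 273--274,'' and they also point to the later level-set description via $T(\lambda)=|\lambda-1|^2\log(|\lambda|^2)/(|\lambda|^2-1)$ from \cite{DHK-Brown-Measure}. So there is no in-paper argument to compare against.

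Your argument is correct and is, in fact, essentially Biane's original one. The polar reduction of $|f_t(re^{i\theta})|=1$, $r\neq 1$, to the scalar equation $\cos\theta=h_t(r)$; the identification of the critical points $z_\pm$ of $f_t$ as the only places where the level set $\{|f_t|=1\}$ can be singular; the monotonicity of $f_t$ on $(0,1)$ and $(1,\infty)$ giving the unique positive-axis crossings $r^*,r^{**}$; the negative-axis analysis producing no extra crossings for $t<4$ and exactly two for $t>4$; and the figure-$8$ picture at $t=4$ with simple connectedness via connectedness of $\widehat{\mathbb{C}}\setminus\Sigma_4$ --- all of this matches \cite[pp.\ 273--274]{BianeJFA}. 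One small point you might make explicit for the $t<4$ case: because $h_t=1$ has $r^*$ as its \emph{only} solution in $(0,1)$ (by the monotonicity you established for $f_t$), the set $\{r\in(0,1):h_t(r)\le 1\}$ is exactly $[r^*,1)$, which is what guarantees the two symmetric arcs $r\mapsto re^{\pm i\arccos h_t(r)}$ exhaust $E_t\cap\mathbb{D}$ and meet only at $r^*$; with that sentence added, the ``must link up'' step is airtight. The alternative route, via the function $T$, has the advantage of giving $\Sigma_t$ and $\partial\Sigma_t$ directly as sublevel and level sets of a single explicit real-analytic function, from which the topological claims follow without tracing branches.
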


%

These properties of the region $\Sigma_t$ were proved  by Biane in \cite{BianeJFA}; see especially pp. 273--274. See also \cite[Section 4.2]{Ho}.
The closure in the definition of the set $E_{t}$ is needed to fill in the points (at most two of them)
where $\partial\Sigma_{t}$ intersects the unit circle.  In recent joint work between the present authors and Driver \cite[Theorem 4.1]{DHK-Brown-Measure}, we found a simpler description of the regions $\Sigma_t$. They are the level sets of a certain explicit function: $\Sigma_t = \{\lambda\in\mathbb{C}\colon T(\lambda)<t\}$ and $\partial\Sigma_t = \{\lambda\in\mathbb{C}\colon T(\lambda)=t\}$, where
\[ T(\lambda) = |\lambda-1|^2\frac{\log(|\lambda|^2)}{|\lambda|^2-1}. \]
(It is easy to compute that this expression has a limit as $|\lambda|\to 1$. Indeed, $T$ extends to a real analytic function on $\mathbb{C}\setminus\{0\}$, and for $|\lambda|=1$, we have $T(\lambda) = |\lambda-1|^2$.)  Figure \ref{t3withcircle.fig} shows the domain
$\Sigma_{t}$ with $t=3$ and $t=4.05$, with the unit circle shown for comparison. Figure \ref{t4region.fig}
then shows the transitional case $t=4$ in more detail. In all cases, 1 is in $\Sigma_t$ and 0 is not in $\overline{\Sigma}_t$.

\begin{figure}[htpb]%
\centering
\includegraphics[scale=0.46]{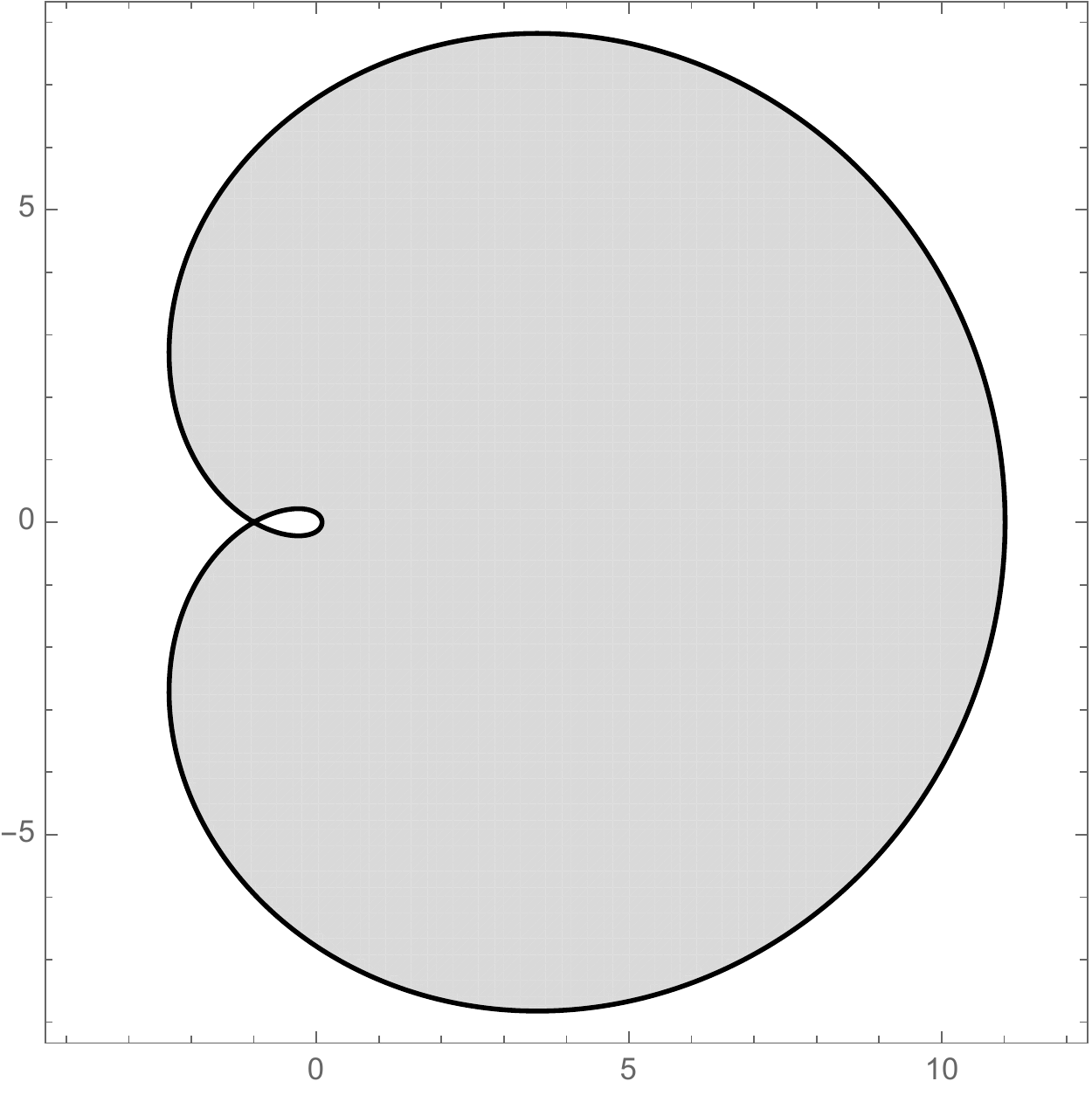}%
\quad\,
\includegraphics[scale=0.477]{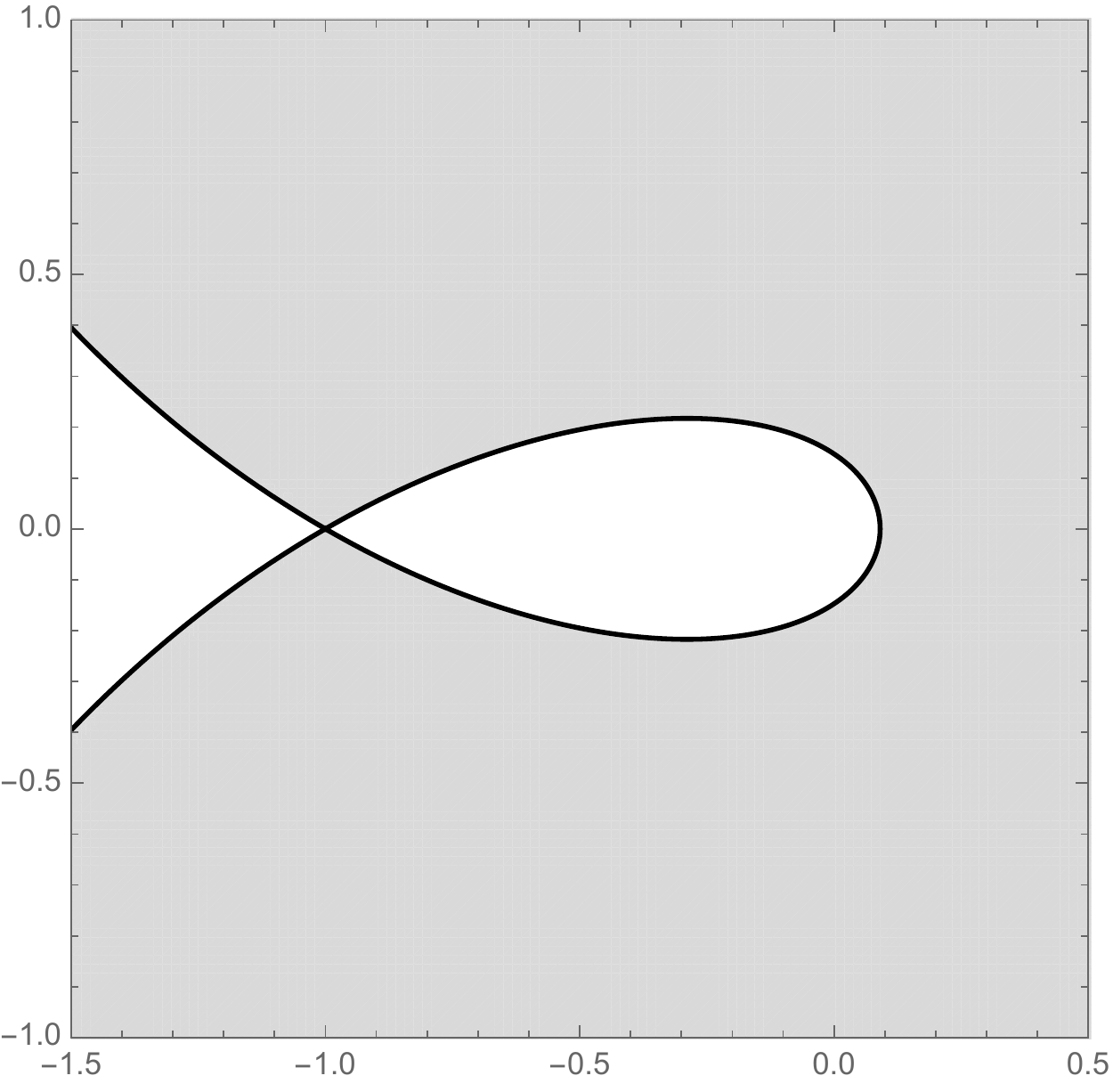}%
\caption{The region $\Sigma_{t}$ with $t=4$ (left) and a detail thereof
(right)}%
\label{t4region.fig}%
\end{figure}

An important property of the region $\Sigma_t$, which follows from the just-cited results of Biane \cite{BianeJFA}, involves
the support arc of the spectral measure $\nu_t$ of free unitary Brownian motion.  This
result is crucial to the proof of our main theorem.

\begin{proposition}
\label{ftMaps.prop}For all $t>0$, the function $f_{t}$ maps $\mathbb{C}\setminus\overline{\Sigma}_{t}$
injectively onto $\mathbb{C}\setminus\mathrm{supp}\,\nu_{t}$.
\end{proposition}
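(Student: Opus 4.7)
My plan is to construct an analytic two-sided inverse to $f_t$ by globally extending $\chi_t$. The integral $\psi_t(z) = \int_{\partial\mathbb{D}}\omega z/(1-\omega z)\,\nu_t(d\omega)$ is holomorphic away from $\{1/\omega : \omega \in \mathrm{supp}\,\nu_t\}$; since $\nu_t$ lives on $\partial\mathbb{D}$ and is invariant under conjugation, that pole locus coincides with $\mathrm{supp}\,\nu_t$. Thus $\chi_t = \psi_t/(1+\psi_t)$ extends meromorphically to $\mathbb{C}\setminus\mathrm{supp}\,\nu_t$, and the identity $f_t\circ\chi_t=\mathrm{id}_{\mathbb{D}}$ propagates by analytic continuation to every connected component of $\mathbb{C}\setminus\mathrm{supp}\,\nu_t$ reachable from $\mathbb{D}$. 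For $t<4$ this set is connected; for $t\ge 4$ the exterior component is reached using the inversion symmetry $f_t(1/z)=1/f_t(z)$, which follows directly from \eqref{e.ft}.

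Next I would show $\chi_t$ sends $\mathbb{C}\setminus\mathrm{supp}\,\nu_t$ into $\mathbb{C}\setminus\overline{\Sigma}_t$. On $\mathbb{D}$: if $\chi_t(w)\in E_t=\partial\Sigma_t$, then $|f_t(\chi_t(w))|=|w|=1$ contradicts $w\in\mathbb{D}$; combined with $\chi_t(0)=0\notin\overline{\Sigma}_t$ and connectedness of $\chi_t(\mathbb{D})$, the image of $\mathbb{D}$ lies in the $0$-component of $\mathbb{C}\setminus\overline{\Sigma}_t$. Across the arcs $\partial\mathbb{D}\setminus\mathrm{supp}\,\nu_t$, the conjugation symmetry of $\nu_t$ together with the inversion identity (inherited by $\chi_t$ as $\chi_t(1/w)=1/\chi_t(w)$) forces $\chi_t(\partial\mathbb{D}\setminus\mathrm{supp}\,\nu_t)\subset\partial\mathbb{D}$, and an open-mapping argument then extends the containment to the full domain. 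The identity $f_t\circ\chi_t=\mathrm{id}$ yields surjectivity of $f_t:\mathbb{C}\setminus\overline{\Sigma}_t\to\mathbb{C}\setminus\mathrm{supp}\,\nu_t$. For injectivity, the reverse composition $\chi_t\circ f_t$ is the identity on the open subset $\chi_t(\mathbb{C}\setminus\mathrm{supp}\,\nu_t)$ by construction; analytic continuation on each connected component of $\mathbb{C}\setminus\overline{\Sigma}_t$ spreads this globally, once one knows $f_t$ avoids $\mathrm{supp}\,\nu_t$ there---which is the same surjectivity claim.

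The chief difficulty I anticipate is demonstrating that $\chi_t(\mathbb{C}\setminus\mathrm{supp}\,\nu_t)$ exhausts all of $\mathbb{C}\setminus\overline{\Sigma}_t$, with correct boundary matching between $\partial\Sigma_t$ and $\mathrm{supp}\,\nu_t$. This rests on Biane's complex-analytic study of $\Sigma_t$ in \cite{BianeJFA}, together with the explicit level-set description $\Sigma_t=\{T<t\}$ from \cite{DHK-Brown-Measure} and the critical-point structure of $f_t$, which changes qualitatively at $t=4$: for $t>4$ the two real critical points open a hole in $\Sigma_t$ around the origin, and the inversion symmetry becomes essential for relating the two components of $\mathbb{C}\setminus\overline{\Sigma}_t$.
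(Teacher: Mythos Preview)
The paper does not supply its own proof of this proposition: it is quoted as a known fact due to Biane \cite{BianeJFA} (see the sentence immediately preceding the proposition, and also \cite[Section~4.2]{Ho} for the two-parameter version). Your sketch is therefore not being compared against a proof in this paper but is essentially a reconstruction of Biane's argument. The overall strategy---extend $\chi_t$ holomorphically to $\mathbb{C}\setminus\mathrm{supp}\,\nu_t$, use the inversion symmetry $\chi_t(1/w)=1/\chi_t(w)$ to handle the exterior component when $t\ge4$, and show that the extended $\chi_t$ is a two-sided inverse to $f_t$---is correct and matches how the paper itself sets up the extension of $\chi_s$ in the two-parameter section.

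One comment on the logic: your separate injectivity paragraph is superfluous, and its final clause is muddled. Once you have (i) $f_t\circ\chi_t=\mathrm{id}$ on $\mathbb{C}\setminus\mathrm{supp}\,\nu_t$ and (ii) $\chi_t$ maps $\mathbb{C}\setminus\mathrm{supp}\,\nu_t$ \emph{onto} $\mathbb{C}\setminus\overline{\Sigma}_t$, then $\chi_t$ is automatically a bijection between these two sets (injectivity of $\chi_t$ is immediate from (i)), and $f_t|_{\mathbb{C}\setminus\overline{\Sigma}_t}$ is simply its inverse; no further analytic-continuation argument is needed. The statement ``$f_t$ avoids $\mathrm{supp}\,\nu_t$ on $\mathbb{C}\setminus\overline{\Sigma}_t$'' does indeed follow from (ii), but it is \emph{not} ``the same surjectivity claim'' you invoked earlier---that earlier claim was surjectivity of $f_t$, i.e.\ the inclusion $f_t(\mathbb{C}\setminus\overline{\Sigma}_t)\supseteq\mathbb{C}\setminus\mathrm{supp}\,\nu_t$, whereas here you need the opposite inclusion. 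In any case, everything collapses to the gap you have already correctly identified: proving that $\chi_t(\mathbb{C}\setminus\mathrm{supp}\,\nu_t)$ exhausts $\mathbb{C}\setminus\overline{\Sigma}_t$. That boundary-matching argument is the real content of Biane's proof, and your outline does not supply it.
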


This is a typical ``slit plane'' conformal map; see Figure \ref{f3maps.fig}.%

\begin{figure}[htpb]%
\centering
\includegraphics[scale=0.4]
{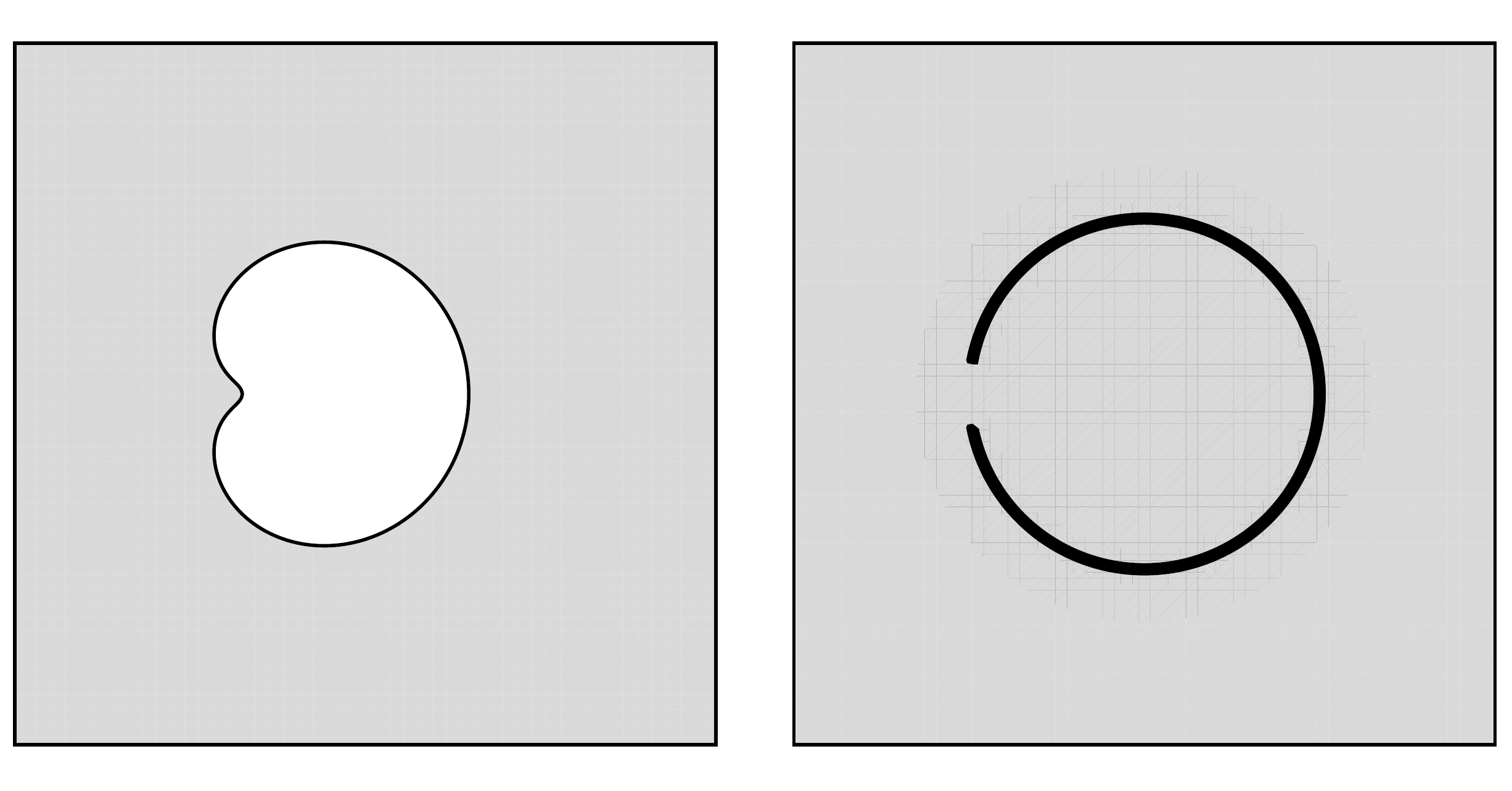}%
\caption{The map $f_{t}$ takes $\mathbb{C}\setminus\overline{\Sigma}_{t}$ (left)
injectively onto $\mathbb{C}\setminus\mathrm{supp}\,\nu_{t}$ (right). Shown
for $t=3$}%
\label{f3maps.fig}%
\end{figure}

\subsection{Brown measure\label{section Brown measure}}

We work in the context of a sufficiently rich noncommutative probability space: a tracial von Neumann algebra.

\begin{definition}
\label{noncom.def}
A \textbf{tracial von Neumann algebra} is a finite von Neumann algebra $\mathcal{A}$ together with a faithful, normal, tracial state $\tau:\mathcal{A}\rightarrow\mathbb{C}$.
\end{definition}

Recall that a \textit{state} $\tau$ is norm-$1$ linear functional taking non-negative elements to non-negative real numbers. (Such a functional necessarily satisfies $\tau(1)=\|\tau\|=1$.) A state $\tau$ is called \textit{faithful} if $\tau(a^*a)>0$ for all $a\neq 0$, it is called \textit{normal} if it is continuous with respect to the weak$^\ast$ topology on $\mathcal{A}$ (cf.\ \cite[Theorem III. 2.1.4, p.\ 262]{Blackadar}), and it is called \textit{tracial} if $\tau(ab)=\tau(ba)$ for all $a,b\in\mathcal{A}$.

Let $(\mathcal{A},\tau)$ be a tracial von Neumann algebra. For each element $a$ of
$\mathcal{A}$, it is possible to define a probability measure $\mu_{a}$ on
$\mathbb{C}$ called the \textbf{Brown measure} of $a$, which should be
interpreted as something like an empirical eigenvalue distribution for the
operator $a$.  The definitions and properties stated in this section may be
found in Brown's original paper \cite{Br} and in \cite[Chapter 11]{MS}.

We first recall the notion of the \textbf{Fuglede--Kadison determinant} of $a$
\cite{FK1,FK2}, denoted $\Delta(a)$, which is most easily defined by a
limiting process:%
\begin{equation}
\log\Delta(a)=\lim_{\varepsilon\rightarrow0}\frac{1}{2}\tau\lbrack\log
(a^{\ast}a+\varepsilon)]. \label{FKdet}%
\end{equation}
In general, $\log\Delta(a)$ may have the value $-\infty$, in which case,
$\Delta(a)=0$. If, for example, $\mathcal{A}=M_N(\mathbb{C})$ and $\tau$
is the normalized trace, then 
$\Delta(a)=\left\vert\det a\right\vert ^{1/N}$, where $\det a$ is the ordinary determinant of $a$.

For a tracial von Neumann algebra $(\mathcal{A},\tau)$, the Brown measure of an element
$a\in\mathcal{A}$ is then defined as%
\[
\mu_{a}=\frac{1}{2\pi}\nabla_{\lambda}^{2}\log(\Delta(a-\lambda)),
\]
where $\nabla_{\lambda}^{2}$ is the Laplacian with respect to $\lambda$,
computed in the distributional sense. It can be shown that this distributional
Laplacian is a represented by a probability measure on the plane.

\begin{proposition}
\label{brownSupport.prop}Let $a$ be an element of $\mathcal{A}$ and let
$\mu_{a}$ be its Brown measure. Then the following results hold.

\begin{enumerate}
\item \label{BrownPropertyProb}The measure $\mu_{a}$ is a probability measure
on the plane.

\item \label{BrownPropertySupport}The support of $\mu_{a}$ is contained
in the spectrum of $a$, but the two sets do not coincide in general.

\item \label{BrownPropertyMoments}For all non-negative integers $n$,
\[
\int z^{n}~d\mu_{a}(z)=\tau\lbrack a^{n}],
\]
and if $a$ is invertible, the same result holds for all integers $n$.
\end{enumerate}
\end{proposition}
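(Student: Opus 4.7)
The plan is to study the function $u(\lambda):=\log\Delta(a-\lambda)$ and derive each statement from its distributional Laplacian. First, a direct calculation shows that each regularized function $u_\varepsilon(\lambda):=\frac{1}{2}\tau[\log((a-\lambda)^\ast(a-\lambda)+\varepsilon)]$ is smooth and subharmonic in $\lambda$ (by differentiating under the trace using $\partial_\lambda\partial_{\bar\lambda}$ and exploiting positivity of the resolvent of $(a-\lambda)^\ast(a-\lambda)+\varepsilon$), and $u_\varepsilon\downarrow u$ as $\varepsilon\downarrow 0$. Hence $u$ is subharmonic and $\mu_a=\frac{1}{2\pi}\nabla^2_\lambda u$ is a non-negative Radon measure. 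To identify its total mass, I would establish the asymptotic expansion
\begin{equation*}
u(\lambda)=\log|\lambda|-\mathrm{Re}\sum_{k=1}^\infty\frac{\tau(a^k)}{k\lambda^k},\qquad |\lambda|>\|a\|,
\end{equation*}
using multiplicativity $\Delta(a-\lambda)=|\lambda|\,\Delta(I-\lambda^{-1}a)$ together with the local identity $\log\Delta(T)=\mathrm{Re}\,\tau(\log T)$ applied to the convergent series for $\log(I-\lambda^{-1}a)$. Applying the divergence theorem to $u-\log|\lambda|$ on a large disk and using $\nabla^2\log|\lambda|=2\pi\delta_0$, the boundary integral of $\partial_r u$ converges to $2\pi$, giving $\mu_a(\mathbb{C})=1$ and establishing part (1).

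For part (2), suppose $\lambda_0$ lies in the resolvent set of $a$, so that $b:=a-\lambda_0$ is invertible. On a disk $U$ around $\lambda_0$ with $|\lambda-\lambda_0|\cdot\|b^{-1}\|<1$, factor $a-\lambda=b\cdot(I-(\lambda-\lambda_0)b^{-1})$; by multiplicativity of the Fuglede--Kadison determinant and the same power-series argument as above,
\begin{equation*}
u(\lambda)=\log\Delta(b)-\mathrm{Re}\sum_{k=1}^\infty\frac{(\lambda-\lambda_0)^k}{k}\,\tau(b^{-k}),
\end{equation*}
which is the real part of a holomorphic function of $\lambda$ on $U$, hence harmonic. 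Therefore $\nabla^2 u=0$ on $U$ and $\mu_a(U)=0$, yielding $\mathrm{supp}(\mu_a)\subseteq\mathrm{spec}(a)$. For the non-coincidence assertion, one exhibits a non-normal operator whose Brown measure is concentrated on a strict subset of its spectrum; indeed, Theorem \ref{main.thm} itself is expected to provide such an example, since $\overline{\Sigma}_t$ is conjecturally a proper subset of $\mathrm{spec}(b_t)$.

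For part (3), I would pair $\lambda^n$ against $\mu_a$ via Green's second identity. Choose $R>\|a\|$ so that $\mathrm{supp}(\mu_a)\subset D_R$; since $\nabla^2\lambda^n=0$ for $n\ge 0$,
\begin{equation*}
2\pi\int\lambda^n\,d\mu_a(\lambda)=\int_{D_R}\lambda^n\nabla^2 u\,dA=\oint_{\partial D_R}\bigl(\lambda^n\partial_r u-u\,\partial_r\lambda^n\bigr)\,ds.
\end{equation*}
Parametrizing $\lambda=Re^{i\theta}$ and substituting the expansion of $u$ from part (1), orthogonality of $\{e^{ik\theta}\}$ on $[0,2\pi]$ isolates the $k=-n$ Fourier mode and produces $2\pi\tau(a^n)$, completing the non-negative case. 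When $a$ is invertible and $n<0$, the same computation applies to an annulus $\varepsilon<|\lambda|<R$ with $\varepsilon$ below $\min\{|\lambda|:\lambda\in\mathrm{spec}(a)\}$; the inner boundary contribution, computed from the analogous expansion $u(\lambda)=\log\Delta(a)-\mathrm{Re}\sum_{k\ge 1}\lambda^k\tau(a^{-k})/k$ near $\lambda=0$ (derived by factoring $a-\lambda=a(I-\lambda a^{-1})$), yields $\tau(a^n)$. The main technical obstacle is justifying that $\log\Delta(T)=\mathrm{Re}\,\tau(\log T)$ holds in the relevant regimes and that the distributional Laplacian computations commute with the $\varepsilon\downarrow 0$ limit; these are handled uniformly via the logarithmic growth bound on $u_\varepsilon$ at infinity and dominated convergence, exploiting the fact that $u_\varepsilon$ decreases monotonically to $u$ in $L^1_{\mathrm{loc}}$.
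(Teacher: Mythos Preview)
The paper does not prove Proposition~\ref{brownSupport.prop}; it is stated as background, with the remark that ``the definitions and properties stated in this section may be found in Brown's original paper \cite{Br} and in \cite[Chapter 11]{MS}.'' The only piece the paper revisits is Point~\ref{BrownPropertySupport}: after recording the explicit Laplacian formula \eqref{limitFormula2}, it notes (Corollary~\ref{BrownSupport.cor}) that uniform boundedness of $\tau[(a_\lambda^\ast a_\lambda+\varepsilon)^{-1}(a_\lambda a_\lambda^\ast+\varepsilon)^{-1}]$ near $\lambda_0$ forces $\lambda_0\notin\mathrm{supp}\,\mu_a$, and then observes that this boundedness is immediate when $\lambda_0$ is in the resolvent set. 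This is a different route to Point~\ref{BrownPropertySupport} than yours (direct harmonicity of $u$ via a power-series factorization), and it is the one the paper actually needs later, since Theorem~\ref{L22support.thm} is proved by feeding a sharper bound into Corollary~\ref{BrownSupport.cor}.

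Your overall argument is the standard one and is essentially correct, but two points deserve comment. First, your proposed witness for ``the two sets do not coincide in general'' is not rigorous: Theorem~\ref{main.thm} only gives $\mathrm{supp}\,\mu_{b_t}\subset\overline{\Sigma}_t$, and the paper explicitly says the strict inclusion $\overline{\Sigma}_t\subsetneq\mathrm{spec}(b_t)$ is expected but not established (see the discussion after Theorem~\ref{bianeSBT.thm}). A clean example already available in the paper is the Haagerup--Larsen result on $\mathscr{R}$-diagonal operators \cite{HaagerupLarsen}, discussed after Theorem~\ref{L22support.thm}: there the spectrum is a full disk while the Brown support is a proper annulus. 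Second, in Part~(3) your use of Green's identity pairs the distribution $\nabla^2 u$ against the non-compactly-supported, and only locally integrable, function $\lambda^n$, while $u$ itself is merely subharmonic on $D_R$; this requires justification. The cleanest fix is to use the Riesz representation $u(\lambda)=\int\log|\lambda-z|\,d\mu_a(z)$ (which follows from your asymptotics and the fact that $\mu_a$ is compactly supported with total mass one), expand $\log|\lambda-z|$ for $|\lambda|>\|a\|$, and match Fourier coefficients with your expansion of $u$; this avoids integrating by parts against a singular $u$.
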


Although the support of the Brown measure can
be a proper subset of the spectrum, there are many interesting examples in
which the two sets coincide.

Using the limiting formula \eqref{FKdet} for the Flugede--Kadison determinant,
we may give a limiting formula for the Brown measure. With the notation%
\[
a_{\lambda}:=a-\lambda,
\]
we have%
\begin{equation}
\mu_{a}=\frac{1}{4\pi}\lim_{\varepsilon\rightarrow0}\left\{  \nabla_{\lambda
}^{2}\tau\lbrack\log(a_{\lambda}^{\ast}a_{\lambda}+\varepsilon)]~d^{2}%
\lambda\right\}  , \label{limitFormula1}%
\end{equation}
where $d^{2}\lambda$ is the two-dimensional Lebesgue measure on $\mathbb{C}$
and the limit is in the weak sense. Furthermore, the Laplacian on the
right-hand side of \eqref{limitFormula1} can be computed explicitly
\cite[Section 11.5]{MS}, giving still another formula for the Brown measure:%
\begin{equation}
\mu_{a}=\frac{1}{\pi}\lim_{\varepsilon\rightarrow0}\left\{  \varepsilon
~\tau\lbrack(a_{\lambda}^{\ast}a_{\lambda}+\varepsilon)^{-1}(a_{\lambda
}a_{\lambda}^{\ast}+\varepsilon)^{-1}]~d^{2}\lambda\right\}  .
\label{limitFormula2}%
\end{equation}

The following result follows easily from \eqref{limitFormula2}.

\begin{corollary}
\label{BrownSupport.cor}Suppose the quantity
\begin{equation}
\tau\lbrack(a_{\lambda}^{\ast}a_{\lambda}+\varepsilon)^{-1}(a_{\lambda
}a_{\lambda}^{\ast}+\varepsilon)^{-1}] \label{boundedQuantity}%
\end{equation}
is bounded uniformly for all $\varepsilon>0$ and all $\lambda$ in a
neighborhood of some value $\lambda_{0}$. Then $\lambda_{0}$ does not belong
to the support of the Brown measure $\mu_{a}$.
\end{corollary}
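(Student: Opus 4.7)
The plan is to feed the hypothesis directly into the weak-limit formula \eqref{limitFormula2}. The key observation is that the integrand appearing in that formula has an \emph{explicit} prefactor of $\varepsilon$ multiplying the trace expression; if the trace itself is uniformly bounded on a neighborhood of $\lambda_0$ for all small $\varepsilon$, then the whole integrand goes pointwise to zero on that neighborhood, which is enough to force the Brown measure to vanish there by a dominated convergence argument.

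Concretely, I would fix an open neighborhood $U$ of $\lambda_0$ and a constant $M<\infty$ such that
\[
\tau[(a_{\lambda}^{\ast}a_{\lambda}+\varepsilon)^{-1}(a_{\lambda}a_{\lambda}^{\ast}+\varepsilon)^{-1}] \le M
\]
for every $\lambda\in U$ and every $\varepsilon\in(0,1]$. Then for any test function $\phi\in C_c(U)$, the weak-limit formula \eqref{limitFormula2} gives
\[
\int \phi\,d\mu_a \;=\; \frac{1}{\pi}\lim_{\varepsilon\to 0^+}\int_U \phi(\lambda)\,\varepsilon\,\tau[(a_{\lambda}^{\ast}a_{\lambda}+\varepsilon)^{-1}(a_{\lambda}a_{\lambda}^{\ast}+\varepsilon)^{-1}]\,d^2\lambda.
\]
The integrand is pointwise dominated by the integrable function $M\|\phi\|_\infty\mathbf{1}_{\mathrm{supp}\,\phi}$ for $\varepsilon\le 1$, and tends pointwise to $0$ as $\varepsilon\to 0^+$ because of the explicit factor of $\varepsilon$. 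Dominated convergence yields $\int\phi\,d\mu_a = 0$, and since $\phi\in C_c(U)$ was arbitrary, the Riesz representation theorem gives $\mu_a(U)=0$, so $\lambda_0\notin\mathrm{supp}\,\mu_a$.

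I expect essentially no obstacle here. The only point requiring care is that \eqref{limitFormula2} asserts a \emph{weak} limit of measures on $\mathbb{C}$: one cannot take a pointwise limit of the density but must pair against a compactly supported test function. The uniform bound in the hypothesis is precisely the hypothesis needed to interchange the limit and the integral via the dominated convergence theorem, so once the weak-limit formula is in hand the corollary is immediate.
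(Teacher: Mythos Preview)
Your argument is correct and is exactly the natural way to justify the claim; the paper itself gives no proof beyond stating that the corollary ``follows easily from \eqref{limitFormula2}'', and your dominated-convergence argument against a compactly supported test function is the intended one-line justification.
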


In particular, if $\lambda_{0}$ belongs to the resolvent set of $a$, it is not
hard to see that the quantity \eqref{boundedQuantity} has a finite limit as
$\varepsilon\to 0$, for all $\lambda$ in a neighborhood of
$\lambda_{0}$, so that the corollary applies. Thus, Corollary
\ref{BrownSupport.cor} implies Point \ref{BrownPropertySupport} of Proposition
\ref{brownSupport.prop}. But the corollary is stronger, in the sense that it may apply even if $\lambda_0$ is in the spectrum of $a$.

We close this section by noting three important special cases of the Brown measure.

\begin{itemize}
\item When $\mathcal{A}=M_N(\mathbb{C})$ and $\tau$
is the normalized trace, the Brown measure of a matrix $A$ is its empirical
eigenvalue distribution. That is,%
\[
\mu_{A}=\frac{1}{N}\sum_{j=1}^{n}\delta_{\lambda_{j}},
\]
where $\lambda_{1},\ldots,\lambda_{N}$ are the eigenvalues of $A$, listed with
their algebraic multiplicity.

\item For a normal element $a$ of $\mathcal{A}$, Brown measure coincided with the
spectral measure: $\mu_a$ is just the composition of the projection-valued spectral resolution with $\tau$:%
\[
\mu_{a}(V)=\tau(E^{a}(V)), \qquad V\in\mathrm{Borel}(\mathbb{C}),
\]
where $E^{a}$ is the projection-valued measure associated to $a$ by the
spectral theorem.

\item {\em $\mathscr{R}$-diagonal operators} form an important class of (generally) non-normal elements of a tracial von Neumann algebra; these include the circular operators $c_t$ and Haar unitaries.  An element
$a\in\mathcal{A}$ is $\mathscr{R}$-diagonal if it has the same non-commutative distribution as $ua$
for any Haar unitary operator $u$ freely independent from $a$; see \cite[Lecture 15]{NS} for more details.
In \cite{HaagerupLarsen}, Haagerup and Larsen proved that the Brown measure of an
$\mathscr{R}$-diagonal operator is rotationally invariant, with a radial real analytic density supported on a certain annulus (or circle) determined by $a$; see the discussion following Theorem \ref{L22support.thm} below for more details.

\end{itemize}

Since the free multiplicative Brownian motion $b_{t}$ is not finite-dimensional, normal, or $\mathscr{R}$-diagonal, none of the preceding cases applies.  We will see, however, that some of the ideas related to the support of the Brown measure of $\mathscr{R}$-diagonal operators are useful in the present context.

\section{Free Segal--Bargmann transform\label{BianesTransform.sec}}

Recall from the Section \ref{section SDEs} that the law $\nu_{t}$ of free unitary
Brownian motion is a probability
measure on the unit circle that represents the limiting empirical eigenvalue
distribution for Brownian motion in the unitary group. In \cite{BianeJFA},
Biane introduced a \textquotedblleft free Hall transform\textquotedblright\ $\mathscr{G}_{t}$
that maps $L^{2}(\partial\mathbb{D},\nu_{t})$ into $\mathcal{H}%
(\Sigma_{t})$, the space of holomorphic functions on the domain $\Sigma_{t}$.
In this section, we recall both the original construction of $\mathscr{G}_{t}$
given by Biane and a realization given by the authors together with Driver
\cite{DHKLargeN} and C\'ebron \cite{Ceb}. The transform $\mathscr{G}_{t}$
will be a crucial tool in the proof of our main theorem.

\subsection{Using free probability\label{freeSBT.sec}}

Let $u_{t}$ be a free unitary Brownian motion and $b_{t}$ a free
multiplicative Brownian that is freely independent from $u_{t}$, both living in
a tracial von Neumann algebra $(\mathcal{B},\tau)$. In the approach pioneered by
Biane and further developed by C\'ebron, the map $\mathscr{G}_{t}$ is characterized by the
requirement that for each Laurent polynomial $p$, we have%
\begin{equation}
(\mathscr{G}_{t}p)(b_{t})=\tau[p(b_{t}u_{t})|b_{t}], \label{freeExpectation}%
\end{equation}
where $\tau[\,\cdot\,|b_{t}]$ is the conditional trace with respect to the algebra
generated by $b_{t}$. (Compare \cite[Theorem 8]{BianeJFA} for a strictly unitary analog, and \cite[Theorem 3]{Ceb} for the precise statement of \eqref{freeExpectation}.) If, for example, $p$ is the polynomial $p(u)=u^{2}$, then it
is not hard to compute (cf.\ \cite[p.\ 55]{MS}) that
\[
\tau[b_{t}u_{t}b_{t}u_{t}|b_{t}]   =\tau(u_{t})^{2}b_{t}^{2}+(\tau(u_{t}%
^{2})-\tau(u_{t})^{2})\tau(b_{t})b_{t}.
\]
We may then use the moment formulas $\tau(u_{t})=e^{-t}$,
$\tau(u_{t}^{2})=e^{-t}(1-t)$, and $\tau(b_{t})=1$. (The moments of $u_{t}$
are the constants $\nu_k(t)$ of \eqref{nutMoments}, while the
moments of $b_{t}$ are the $s=t$ case of \eqref{bstMoments}.) We therefore obtain
\[
\tau[b_{t}u_{t}b_{t}u_{t}|b_{t}]   = e^{-t}(b^2-tb).
\]
Thus, in this
case, $\mathscr{G}_{t}p$ is also a polynomial, given by%
\begin{equation}
(\mathscr{G}_{t}p)(b)=e^{-t}(b^{2}-tb). \label{GtU2}%
\end{equation}

As explained in Section \ref{GtBt}, the map $\mathscr{G}_{t}$ can be viewed as
the large-$N$ limit of the generalized Segal--Bargmann transform over $\mathrm{U}(N)$ introduced by
the first author in \cite{Ha1994}. The motivation for Biane's definition of
$\mathscr{G}_{t}$ is the stochastic approach to the generalized
Segal--Bargmann transform developed by Gross and Malliavin \cite{GM}.

\subsection{As an integral operator}

Using the subordination method developed
in \cite{BianeSubordination}, Biane realized $\mathscr{G}_{t}$ as an
integral operator mapping $L^{2}(\partial\mathbb{D},\nu_{t})$ into $\mathcal{H}(\Sigma
_{t})$. Explicitly,
\begin{equation}
(\mathscr{G}_{t}f)(z)=\int_{\partial\mathbb{D}}f(\omega)\frac{\left\vert 1-\chi_{t}%
(\omega)\right\vert ^{2}}{(z-\chi_{t}(\omega))(z^{-1}-\overline{\chi_{t}(\omega)})}~d\nu
_{t}(\omega),\quad z\in\Sigma_{t}, \label{GtIntegral}%
\end{equation}
where $\chi_{t}$ was defined in \eqref{e.chit}. (See 
\cite[Theorem 8]{BianeJFA} and the computations that follow it, along with Proposition
13.) Here, for $\omega\in \partial\mathbb{D}$, $\chi_{t}(\omega)$ denotes the value of
the unique continuous extension of $\chi_t$ to $\overline{\mathbb{D}}$; in other words,
it is the limiting value of $\chi_{t}(\zeta)$ as $\zeta\in\mathbb{D}$ approaches $\omega$
from \textit{inside} the unit disk. Note
that for $\omega\in\partial\mathbb{D}$ both $\chi_{t}(\omega)$ and $1/\overline{\chi_{t}(\omega)}$ lie
on the boundary of $\Sigma_{t}$, so that the integrand in \eqref{GtIntegral}
is a holomorphic function of $z$ for $z$ in the open set $\Sigma_{t}$.
Biane showed that, for $t\neq4$, the map $\mathscr{G}_{t}$ is injective, so that
it is possible to identify $L^{2}(\partial\mathbb{D},\nu_{t})$ with its image:%
\[
\mathscr{A}_{t}:=\mathrm{Image}(\mathscr{G}_{t})\subset\mathcal{H}(\Sigma
_{t}).
\]

Now, let us define $L_{\mathrm{hol}}^{2}(b_{t},\tau)$ to be the closure in the
noncommutative $L^{2}$ space $L^{2}(\mathcal{B},\tau)$ of the space of
elements of the form $p(b_{t})$, where $p$ is a Laurent polynomial in one
variable. That is to say, $L_{\mathrm{hol}}^{2}(b_{t},\tau)$ is the closure of
the span of the positive and negative integer powers of $b_{t}$, \textit{not}
including any powers of $b_{t}^{\ast}$. (Biane used a slightly different
definition that is easily seen to be equivalent to this one.) Biane showed that
for $t\neq4$, there is a bijection between $\mathscr{A}_{t}$ and
$L_{\mathrm{hol}}^{2}(b_{t},\tau)$ uniquely determined by the condition that
for each Laurent polynomial $p$, we have%
\[
p\mapsto p(b_{t}).
\]
Note that for $t\neq4$, the space $L_{\mathrm{hol}}^{2}(b_{t},\tau)$ is
identified with the space $\mathscr{A}_{t}$ of holomorphic functions.
Nevertheless, the noncommutative $L^{2}$ norm on $L_{\mathrm{hol}}^{2}%
(b_{t},\tau)$ does not correspond to an $L^{2}$ norm on $\mathscr{A}_{t}$ with
respect to any measure on $\Sigma_{t}$.  (It is, instead, the Hilbert space norm
induced by a certain reproducing kernel on $\mathscr{A}_t$ which is defined
by the integral kernel of $\mathscr{G}^t$, cf.\ \eqref{GtIntegral}.)

For a general $f\in\mathscr{A}_{t}$, we will write the corresponding element
of $L_{\mathrm{hol}}^{2}(b_{t},\tau)$ suggestively as $f(b_{t})$ and think of
the map $f\mapsto f(b_{t})$ as a variant of the usual holomorphic functional
calculus. That is to say, we think of the map from $\mathscr{A}_{t}$ to
$L_{\mathrm{hol}}^{2}(b_{t},\tau)$ as \textquotedblleft evaluation on $b_{t}%
$.\textquotedblright\ Note, however, that elements of $L_{\mathrm{hol}}%
^{2}(b_{t},\tau)$ are in general unbounded operators.

\begin{theorem}
[Biane's Free Hall Transform]\label{bianeSBT.thm}For all $t>0$ with
$t\neq4$, the map $\mathscr{G}_{t}$ is a unitary isomorphism from
$L^{2}(\partial\mathbb{D},\nu_{t})$ to $\mathscr{A}_{t}$, where the norm on
$\mathscr{A}_{t}$ is defined by identification with $L_{\mathrm{hol}}^{2}(b_{t},\tau)$.
In particular, we have%
\[
\left\Vert f\right\Vert _{L^{2}(\partial\mathbb{D},\nu_{t})}=
\tau\{[(\mathscr{G}_{t}f)(b_t)]^* (\mathscr{G}_{t}f)(b_t)\}
\]
for all $f\in L^{2}(\partial\mathbb{D},\nu_{t})$.
\end{theorem}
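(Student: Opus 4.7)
The strategy is to prove isometry on a dense subspace and then extend by continuity; once isometry is established, the unitary isomorphism onto $\mathscr{A}_t$ follows from the very definition $\mathscr{A}_t := \mathrm{Image}(\mathscr{G}_t)$. Density of Laurent polynomials in $L^2(\partial\mathbb{D}, \nu_t)$ is immediate from the complex Stone--Weierstrass theorem, since on $\partial\mathbb{D}$ one has $\overline{\omega} = \omega^{-1}$, so Laurent polynomials form a $\ast$-subalgebra separating points of the compact set $\partial\mathbb{D}$.

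For the isometry on Laurent polynomials, my first attempt would be a direct free-probabilistic calculation. Writing $p(z) = \sum_m c_m z^m$ and using \eqref{freeExpectation} together with standard properties of the conditional trace (trace preservation and the $W^{\ast}(b_t)$-bimodule property), one obtains
\begin{equation*}
\tau\bigl[(\mathscr{G}_t p)(b_t)^{\ast} (\mathscr{G}_t p)(b_t)\bigr] = \sum_{m,n} \overline{c_n}\, c_m\, \tau\bigl[\tau[(b_t u_t)^n \mid b_t]^{\ast}\, (b_t u_t)^m\bigr].
\end{equation*}
Each mixed moment on the right can in principle be computed via the alternating moment formula for freely independent variables, expressing everything in terms of the holomorphic moments $\tau[b_t^k]=1$ and the moments $\nu_k(t)$ of $u_t$. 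The target is to verify that this sum equals $\sum_{m,n}\overline{c_n}\,c_m\,\nu_{m-n}(t) = \|p\|^2_{L^2(\partial\mathbb{D},\nu_t)}$, a combinatorial matching that is in principle tractable but rapidly grows unwieldy.

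A cleaner and more illuminating approach, and the one I would ultimately pursue, is via the \emph{subordination method} developed in \cite{BianeSubordination}. The key idea is that the conditional trace $\tau[F(b_t u_t)\mid b_t]$ for suitable holomorphic $F$ has an explicit representation as $\widetilde{F}(b_t)$, with $\widetilde{F}$ obtained by an analytic subordination prescription on the Cauchy transforms of $b_t u_t$ and $b_t$; for this particular product the subordination is encoded precisely by the function $\chi_t$ of \eqref{e.chit}, which leads directly to the integral representation \eqref{GtIntegral}. Once \eqref{GtIntegral} is in hand, unitarity can be checked by computing the reproducing kernel on $\mathscr{A}_t$ induced by its identification with $L^2_{\mathrm{hol}}(b_t,\tau)$ and matching it to the Poisson-type kernel appearing in \eqref{GtIntegral}. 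The \textbf{main obstacle} is the derivation of the subordination formula and the verification of its analytic properties on $\Sigma_t$; the exclusion of the critical value $t=4$ enters at precisely this point, because there the region $\Sigma_t$ transitions from simply to doubly connected and the subordination map degenerates on the boundary.
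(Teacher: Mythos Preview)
The paper does not supply its own proof of this theorem; it is stated as a background result due to Biane, with pointers to \cite[Theorem~8 and Proposition~13]{BianeJFA} and to the Gross--Malliavin mechanism discussed in Section~\ref{GtBt}. Your broad strategy---prove isometry on Laurent polynomials and extend by density, with the subordination method yielding the integral kernel \eqref{GtIntegral}---matches the structure of Biane's argument as the paper describes it.

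There is, however, a genuine misidentification in your proposal of where the hypothesis $t\neq 4$ enters. You place the obstruction in the subordination/isometry step, asserting that at $t=4$ ``the subordination map degenerates on the boundary'' as $\Sigma_t$ changes topology. This is not the issue. As the paragraph immediately following the theorem states explicitly, the isometry $\|p\|_{L^2(\partial\mathbb{D},\nu_t)}=\|(\mathscr{G}_t p)(b_t)\|_{L^2(\mathcal{B},\tau)}$ on Laurent polynomials holds for \emph{all} $t>0$, including $t=4$; its proof (via the free analogue of the Gross--Malliavin argument) uses no topological input from $\Sigma_t$. What is not known at $t=4$ is the \emph{injectivity} of $\mathscr{G}_t$ as a map into $\mathcal{H}(\Sigma_t)$---equivalently, the injectivity of the evaluation $F\mapsto F(b_t)$ from $\mathscr{A}_t$ to $L^2_{\mathrm{hol}}(b_t,\tau)$. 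Without that injectivity one cannot put a norm on $\mathscr{A}_t$ by identification with $L^2_{\mathrm{hol}}(b_t,\tau)$, and the statement as phrased becomes meaningless. (Note also that $\Sigma_t$ is simply connected for $t\le 4$ and doubly connected for $t>4$, so the topology does not actually change \emph{at} $t=4$.) Your write-up should therefore separate the two ingredients: the isometry, which you may establish unconditionally, and the injectivity of the holomorphic realization, which is what genuinely requires $t\neq 4$ (cf.\ \cite[Lemma~17]{BianeJFA}).
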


When $t=4$, the preceding theorem is not known to hold, because it is not
known that $\mathscr{G}_{t}$ is injective. But one still has a theorem, as
follows. One considers at first the map $p\mapsto\mathscr{G}_{t}p$ on
polynomials and then constructs a map from the space of polynomials into
$L_{\mathrm{hol}}^{2}(b_{t},\tau)$ by mapping $p$ to $(\mathscr{G}_{t}%
p)(b_{t})$. This map is isometric for all $t>0$ and it extends to a unitary
map of $L^{2}(\partial\mathbb{D},\nu_{t})$ onto $L_{\mathrm{hol}}^{2}(b_{t},\tau)$; see
Section \ref{computeL2spec.sec} for details.

In light of the preceding discussion, we expect that, at least for $t\neq4$,
the spectrum of $b_{t}$ will \textit{not} be contained in $\Sigma_{t}$. After
all, if such a containment held, the operator $f(b_{t})$, $f\in\mathscr{A}%
_{t}\subset\mathcal{H}(\Sigma_{t})$ would presumably be computable by the
holomorphic functional calculus, in which case $f(b_{t})$ would be a bounded
operator. But actually, \textit{every} element of $L_{\mathrm{hol}}^{2}%
(b_{t},\tau)$ arises as $f(b_{t})$ for some $f\in\mathcal{A}_{t}$, and the
elements of $L_{\mathrm{hol}}^{2}(b_{t},\tau)$ are in general unbounded operators.

On the other hand, since we are able to define $f(b_{t})$ for any $f\in\mathscr{A}_{t}$,
at least as an unbounded operator, it seems reasonable to
expect that the spectrum of $\Sigma_{t}$ is contained in the \textit{closure
of }$\Sigma_{t}$. Our main result, that the Brown measure of $b_{t}$ is
supported in $\overline{\Sigma}_{t}$, is a step toward establishing this claim;
compare Proposition \ref{brownSupport.prop}.

\subsection{From the generalized Segal--Bargmann transform\label{GtBt}}

In 1994, the first author introduced a generalized Segal--Bargmann transform
for compact Lie groups \cite{Ha1994}. In the case of the unitary group $\mathrm{U}(N)$,
the transform, which we denote here as $\mathscr{G}_{t}^{N}$, maps $L^{2}(\mathrm{U}(N),\rho_{t})$ to the
space of \textit{holomorphic} functions in $L^{2}(\mathrm{GL}(N;\mathbb{C}),\gamma
_{t})$.  (Note: in \cite{Ha1994} and follow-up work such as \cite{DHKLargeN},
the transform was often denoted $B_t$; to avoid clashing with our present notation
$B^N_t$ for the Brownian motion on $\mathrm{GL}(N;\mathbb{C})$, we use $\mathscr{G}_t^N$ instead
for the Segal--Bargmann transform here.) Here $\rho_{t}$ and $\gamma_{t}$ are \textit{heat kernel
measures}---that is, the distributions at time $t$ of Brownian motions on the
respective groups, starting at the identity. The transform is defined as%
\begin{equation}
\mathscr{G}_{t}^{N}f=(e^{t\Delta/2}f)_{\mathbb{C}}, \label{btnDef}%
\end{equation}
where $\Delta$ is the Laplacian on $\mathrm{U}(N)$, $e^{t\Delta/2}$ is the associated
(forward) heat operator, and $(\cdot)_{\mathbb{C}}$ denotes the holomorphic
extension of a sufficiently nice function from $\mathrm{U}(N)$ to $\mathrm{GL}(N;\mathbb{C})$.
See also \cite{Hall2001} for more information. The transform can easily be
\textquotedblleft boosted\textquotedblright\ to map matrix-valued functions
on~$\mathrm{U}(N)$ to holomorphic matrix-valued functions on $\mathrm{GL}(N;\mathbb{C})$
(by acting component-wise; i.e.,\ via $\mathscr{G}^N_t\otimes\mathbf{1}_{M_N(\mathbb{C})}$).

A stochastic approach to the transform was developed by Gross and Malliavin in
\cite{GM}; this approach played an important role in Biane's paper
\cite{BianeJFA}. See also \cite{Halll2003,HS} for further
development of the ideas in \cite{GM}.  Let $U_{t}^{N}$ and $B_{t}^{N}$ be
independent Brownian motions in $\mathrm{U}(N)$ and $\mathrm{GL}(N;\mathbb{C})$ (cf.\ \eqref{e.UB.SDEs}),
and let $f$ be a function on $\mathrm{U}(N)$ that admits a holomorphic extension (also
denoted $f$) to $\mathrm{GL}(N;\mathbb{C})$. Then we have
\begin{equation}
\mathbb{E}[f(B_{t}^{N}U_{t}^{N})|B_{t}^{N}]=(\mathscr{G}_{t}^{N}f)(B_{t}^{N}).
\label{conditioningGM}%
\end{equation}
This result, by itself, is not deep. After all, in the finite-dimensional
case, the conditional expectation can be computed as an expectation with
respect to $U^N_{t}$, with $B^N_{t}$ treated as a constant. Since $U^N_{t}$ is
distributed as a heat kernel on $\mathrm{U}(N)$, the left-hand side of
\eqref{conditioningGM} becomes a convolution of $f$ with the heat kernel,
giving the heat kernel in the definition \eqref{btnDef} of the transform
$\mathscr{G}_{t}^{N}$.

The crucial next step in \cite{GM} is to regard $U^N_{t}$ and $B^N_{t}$ as
functionals of Brownian motions in the Lie algebra, by solving the relevant versions of the
stochastic differential equation \eqref{e.BM.Lie.Group}. Using this idea, Gross and
Malliavin are able to deduce the properties of the \textit{generalized}
Segal--Bargmann from the previously known properties of the \textit{classical}
Segal--Bargmann transform for an infinite-dimensional linear space, namely the
path space in the Lie algebra of $\mathrm{U}(N)$. (We are glossing over certain
technical distinctions; the preceding description is actually closer to
\cite[Theorem 18]{HS}.) The expression \eqref{conditioningGM} was the
motivation for Biane's formula \eqref{freeExpectation} in the free case, and
just as in \cite{GM}, Biane was able to obtain properties of the transform
$\mathscr{G}_{t}$ from the corresponding linear case.

In \cite{BianeJFA}, Biane conjectured, with an outline of a proof, that the
free Hall transform $\mathscr{G}_{t}$ can be realized using the large-$N$
limit of $\mathscr{G}_{t}^{N}$. This conjecture was then verified independently by the
authors and Driver in \cite{DHKLargeN} and by C\'{e}bron in \cite{Ceb}; see
also the expository paper \cite{HallExpository}.

The limiting process is as follows. Consider the transform $\mathscr{G}_{t}^{N}$ on
matrix-valued functions of the form $f(U)$, where $f$ is a function on the
unit circle and $f(U)$ is computed by the functional calculus. If, for
example, $f$ is the function $f(u)=u^{2}$ on the circle, then we can consider
the associated matrix-valued function $f(U)=U^{2}$ on the unitary group
$\mathrm{U}(N)$. For any fixed $N$, the transformed function $\mathscr{G}_{t}^{N}(f)$ on
$\mathrm{GL}(N;\mathbb{C})$ will no longer be of functional-calculus type.
Nevertheless, \textit{in the large-}$N$ \textit{limit}, $\mathscr{G}_{t}^{N}$ will map
$f(U)$ to the functional-calculus function $(\mathscr{G}_{t}f)(Z)$, $Z\in
\mathrm{GL}(N;\mathbb{C})$.

Specifically, if $p$ is a Laurent polynomial, then $\mathscr{G}_{t}p$ is also
a Laurent polynomial, and (abusing notation slightly)
\[
\mathscr{G}_{t}^{N}(p(U))=(\mathscr{G}_{t}p)(Z)+O(1/N^{2}),\quad Z\in \mathrm{GL}(N;\mathbb{C}),
\]
where $O(1/N^{2})$ denotes a term whose norm is bounded by a constant times
$1/N^{2}$. See \cite[Theorem 1.11]{DHKLargeN} and \cite[Theorem 4]{Ceb}.
In particular, if $f(U)=U^{2}$, then in light of \eqref{GtU2}, we have%
\[
(\mathscr{G}_{t}^{N}f)(Z)=e^{-t}(Z^{2}-tZ)+O(1/N^{2}),\quad Z\in \mathrm{GL}(N;\mathbb{C}).
\]
(See also Example 3.5 and the computations on p. 2592 of \cite{DHKLargeN}.)

\section{An outline of the proof of Theorem \ref{main.thm}\label{outline.sec}}

As we pointed out in Proposition \ref{brownSupport.prop}, the Brown measure of
an operator $a$ is supported on the spectrum of $a$. We strengthen this
result, as follows. Given a noncommutative probability space $(\mathcal{A},\tau)$, we can construct the noncommutative $L^{2}$ space
$L^{2}(\mathcal{A},\tau)$, which is the completion of $\mathcal{A}$ with
respect to the noncommutative $L^{2}$ inner product, $\left\langle
a,b\right\rangle =\tau(b^{\ast}a)$. It makes sense to multiply an element of
the noncommutative $L^{2}$ space $L^{2}(\mathcal{A},\tau)$ by an element of
$\mathcal{A}$ itself, and the result is again in $L^{2}(\mathcal{A},\tau)$. We
say that $a\in\mathcal{A}$ \textbf{has an inverse in }$L^{2}$ if there exists
some $b\in L^{2}(\mathcal{A},\tau)$ such that $ab=ba=1$.

\begin{theorem}
\label{BrownAndL2Spec.thm}Let $(\mathcal{A},\tau)$ be a tracial von Neumann algebra and let $\lambda_{0}$ be in $\mathbb{C}$.
Suppose that $(a-\lambda)^{2}$ has an inverse---denoted $(a-\lambda)^{-2}%
$---in $L^{2}(\mathcal{A},\tau)$ for all $\lambda$ in a neighborhood of
$\lambda_{0}$ and that $\left\Vert (a-\lambda)^{-2}\right\Vert _{L^{2}%
(\mathcal{A},\tau)}$ is bounded near $\lambda_{0}$. Then $\lambda_{0}$ does
not belong to the support of the Brown measure $\mu_{a}$.
\end{theorem}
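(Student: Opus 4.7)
The plan is to apply Corollary~\ref{BrownSupport.cor}: it will suffice to show that, for $\lambda$ in a neighborhood of $\lambda_0$, the quantity $\tau[(a_\lambda^{\ast}a_\lambda+\varepsilon)^{-1}(a_\lambda a_\lambda^{\ast}+\varepsilon)^{-1}]$ is bounded uniformly in $\varepsilon>0$. I will in fact aim for the stronger pointwise estimate
\[
\tau[(a_\lambda^{\ast}a_\lambda+\varepsilon)^{-1}(a_\lambda a_\lambda^{\ast}+\varepsilon)^{-1}]\;\le\;\|(a-\lambda)^{-2}\|_{L^2(\mathcal{A},\tau)}^{2},
\]
which, combined with the standing hypothesis, gives the required uniform bound near $\lambda_0$.

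The main device is the polar decomposition $a_\lambda=u|a_\lambda|$. First I would observe that the existence of $e_\lambda:=(a_\lambda)^{-2}$ in $L^2$ forces $a_\lambda$ to be injective with dense range (if $a_\lambda x=0$ then $x=e_\lambda a_\lambda^{2}x=0$, and dually for the range), so in the finite von Neumann algebra $(\mathcal{A},\tau)$ the partial isometry $u$ is in fact a unitary. Setting $r_\varepsilon:=(|a_\lambda|^2+\varepsilon)^{-1}\in\mathcal{A}$, the identities $a_\lambda^{\ast}a_\lambda=|a_\lambda|^2$ and $a_\lambda a_\lambda^{\ast}=u|a_\lambda|^2 u^{\ast}$ give
\[
(a_\lambda^{\ast}a_\lambda+\varepsilon)^{-1}(a_\lambda a_\lambda^{\ast}+\varepsilon)^{-1}\;=\;r_\varepsilon\,u\,r_\varepsilon\,u^{\ast},
\]
so the target inequality becomes $\tau[r_\varepsilon u r_\varepsilon u^{\ast}]\le\|e_\lambda\|_2^{2}$.

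The central step will be a monotonicity lemma: for positive operators $0\le A\le B$ affiliated with $\mathcal{A}$ and any unitary $u\in\mathcal{A}$, one has $\tau[AuAu^{\ast}]\le\tau[BuBu^{\ast}]$. The idea is to expand
\[
BuBu^{\ast}-AuAu^{\ast}\;=\;(B-A)\,uBu^{\ast}+Au\,(B-A)\,u^{\ast}
\]
and, after cyclic rearrangement, rewrite each summand as the trace of a positive operator (e.g.\ $\tau[(B-A)uBu^{\ast}]=\tau[(B-A)^{1/2}uBu^{\ast}(B-A)^{1/2}]$). I would then apply this lemma with $A=r_\varepsilon$ and $B=|a_\lambda|^{-2}$, which is a positive operator in $L^1$ (since $a_\lambda^{-1}=a_\lambda e_\lambda\in L^2$ yields $(a_\lambda^{\ast}a_\lambda)^{-1}=a_\lambda^{-1}(a_\lambda^{\ast})^{-1}\in L^1$), and where $r_\varepsilon\le|a_\lambda|^{-2}$ follows from functional calculus on $|a_\lambda|^2$. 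A short computation using cyclic traciality and the polar identity $e_\lambda=|a_\lambda|^{-1}u^{\ast}|a_\lambda|^{-1}u^{\ast}$ then identifies $\tau[|a_\lambda|^{-2}u|a_\lambda|^{-2}u^{\ast}]=\|e_\lambda\|_2^{2}$, closing the argument.

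The main obstacle I anticipate is the technical justification of the monotonicity lemma when $B$ is unbounded: the expansion and the cyclic rearrangements must be verified using the correct H\"older exponents ($L^2\!\cdot\!L^1\!\cdot\!L^2$ type pairings), and one has to check that all traces involved are well defined. A clean workaround is to establish the lemma first for bounded $B$ and then pass to the limit by monotone convergence---for instance via the spectral truncations $B_\delta:=|a_\lambda|^2(|a_\lambda|^4+\delta)^{-1}\in\mathcal{A}$, which are increasing to $|a_\lambda|^{-2}$ as $\delta\downarrow 0$.
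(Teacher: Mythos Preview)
Your strategy is sound and lands on the same pointwise inequality the paper proves (their Proposition~\ref{epsilonToZero.prop}), but the packaging differs. The paper never invokes the polar decomposition; instead it removes the two $\varepsilon$'s one at a time, each time comparing two \emph{bounded} positive operators $x\le y$ and applying a trace-monotonicity-of-inverse lemma ($x\le y$ in $\mathcal{A}$ with $L^1$ inverses $\Rightarrow\tau(y^{-1})\le\tau(x^{-1})$). Concretely,
\[
\tau[(a^\ast a+\varepsilon)^{-1}(aa^\ast+\varepsilon)^{-1}]\;\le\;\tau[(aa^\ast)^{-1}(a^\ast a+\varepsilon)^{-1}]\;\le\;\tau[((a^\ast)^2a^2)^{-1}],
\]
via $(a^\ast a+\varepsilon)^{1/2}(aa^\ast+\varepsilon)(a^\ast a+\varepsilon)^{1/2}\ge (a^\ast a+\varepsilon)^{1/2}(aa^\ast)(a^\ast a+\varepsilon)^{1/2}$ and then $a^\ast(a^\ast a+\varepsilon)a\ge (a^\ast)^2a^2$. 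In your notation this is exactly the two halves of the telescoping $BuBu^\ast-AuAu^\ast=(B-A)uBu^\ast+Au(B-A)u^\ast$, handled separately. What the paper's arrangement buys is that the order comparison is always between \emph{bounded} operators, with only their inverses living in $L^1$; this cleanly avoids the $L^2\!\cdot\!L^1\!\cdot\!L^2$ pairing issue you flag.

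There is one concrete wrinkle in your proposed workaround: the truncations $B_\delta=|a_\lambda|^2(|a_\lambda|^4+\delta)^{-1}$ do \emph{not} dominate $r_\varepsilon$ (on the spectral scale, $x/(x^2+\delta)\to 0$ as $x\to 0$ while $(x+\varepsilon)^{-1}\to 1/\varepsilon$), so you cannot feed $A=r_\varepsilon$, $B=B_\delta$ into the bounded lemma and then let $\delta\downarrow 0$. The natural repair is the asymmetric two-step move: first use boundedness of $r_\varepsilon^{1/2}$ to get $\tau[r_\varepsilon u r_\varepsilon u^\ast]\le\tau[r_\varepsilon u|a_\lambda|^{-2}u^\ast]$, then handle the remaining replacement by conjugating so that the comparison is again between bounded operators (equivalently, the paper's trick of conjugating by $a_\lambda$). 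Once you make the unbounded step rigorous this way, your argument essentially coincides with the paper's; the polar-decomposition framing is a pleasant conceptual simplification but not a genuinely different mechanism.
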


Note that if $a-\lambda_{0}$ has a \textit{bounded} inverse---that is, an
inverse in $\mathcal{A}$---then $a-\lambda$ also has an inverse for all
$\lambda$ in a neighborhood of $\lambda_{0}$, and $\left\Vert (a-\lambda
)^{-1}\right\Vert _{\mathcal{A}}$ is bounded near $\lambda_{0}$. In that case,
we have%
\[
\left\Vert (a-\lambda)^{-2}\right\Vert _{L^{2}(\mathcal{A},\tau)}%
\leq\left\Vert (a-\lambda)^{-2}\right\Vert _{\mathcal{A}}\leq\left\Vert
(a-\lambda)^{-1}\right\Vert _{\mathcal{A}}^{2},
\]
which shows that $\left\Vert (a-\lambda)^{-2}\right\Vert _{L^{2}%
(\mathcal{A},\tau)}$ is bounded near $\lambda_{0}$. Thus, we can recover from
Theorem \ref{BrownAndL2Spec.thm} the result that the support of
$\mu_{a}$ is contained in the spectrum of $a$. In general, however, Theorem
\ref{BrownAndL2Spec.thm} could apply even if $a-\lambda_{0}$ does not have a
bounded inverse.

We now briefly indicate the proof of Theorem \ref{BrownAndL2Spec.thm}. Using
the notation%
\[
a_{\lambda}=a-\lambda,
\]
we make the following intuitive but non-rigorous estimates: for all $\varepsilon>0$,
\begin{align*}
\tau\lbrack(a_{\lambda}^{\ast}a_{\lambda}+\varepsilon)^{-1}(a_{\lambda
}a_{\lambda}^{\ast}+\varepsilon)^{-1}]   \leq\tau\lbrack(a_{\lambda}^{\ast
}a_{\lambda})^{-1}(a_{\lambda}a_{\lambda}^{\ast})^{-1}]
  &=\tau\lbrack a_{\lambda}^{-2}(a_{\lambda}^{-2})^{\ast}] \\
  &=\left\Vert (a-\lambda)^{-2}\right\Vert _{L^{2}(\mathcal{A},\tau)}^{2}.
\end{align*}
(The given estimate actually does hold; the proof is in Section \ref{section L22}.) If the hypotheses of the theorem hold, this last expression is bounded for
$\lambda$ near $\lambda_{0}$. Corollary \ref{BrownSupport.cor} then shows that
$\lambda_{0}$ is not in the support of the Brown measure of $a$.

We now apply Theorem \ref{BrownAndL2Spec.thm} to the case of interest to us,
in which $a$ is taken to be a free multiplicative Brownian motion $b_{t}$ in a tracial von Neumann algebra $(\mathcal{B},\tau)$. Recall that $L_{\mathrm{hol}}^{2}%
(b_{t},\tau)$ denotes the closure in $L^{2}(\mathcal{B},\tau)$ of the space of
Laurent polynomials in the element $b_{t}$.

\begin{theorem}
\label{L2Inverse.thm}For all $t>0$, if $\lambda\in\mathbb{C}\setminus
\overline{\Sigma}_{t}$, then the element $(b_{t}-\lambda)^{n}$ has an inverse in
$L_{\mathrm{hol}}^{2}(b_{t},\tau)\subset L^{2}(\mathcal{B},\tau)$ for all
$n=1,2,3,\ldots$, with local bounds on the $L^{2}$ norm of the inverse.
\end{theorem}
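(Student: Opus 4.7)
The plan is to transfer the problem to Biane's free Hall transform $\mathscr{G}_t$ by using the unitary identification of $L_{\mathrm{hol}}^{2}(b_t,\tau)$ with the image $\mathscr{A}_t = \mathrm{Image}(\mathscr{G}_t)\subset\mathcal{H}(\Sigma_t)$ (Theorem \ref{bianeSBT.thm} for $t\neq 4$, and the extended isometry described at the end of Section \ref{BianesTransform.sec} for $t=4$). Since $\lambda\notin\overline{\Sigma}_t$, the function $g_{\lambda,n}(z)=(z-\lambda)^{-n}$ is holomorphic in a neighborhood of $\overline{\Sigma}_t$, so I aim to show $g_{\lambda,n}\in\mathscr{A}_t$; then the corresponding element $g_{\lambda,n}(b_t)\in L_{\mathrm{hol}}^{2}(b_t,\tau)$ is the candidate inverse. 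To see that it really \emph{is} an inverse: left multiplication by $b_t$ is a bounded operator on $L^{2}(\mathcal{B},\tau)$ that preserves $L_{\mathrm{hol}}^{2}(b_t,\tau)$, and under the Biane identification it corresponds to pointwise multiplication by $z$ on $\mathscr{A}_t$; hence $(b_t-\lambda)^n g_{\lambda,n}(b_t)$ matches the constant function $(z-\lambda)^n g_{\lambda,n}(z)=1\in\mathscr{A}_t$, which is the unit of $\mathcal{B}$.

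The crux is therefore to realize $g_{\lambda,n}$ as an element of $\mathscr{A}_t$ by producing a preimage $h_{\lambda,n}\in L^{2}(\partial\mathbb{D},\nu_t)$ with $\mathscr{G}_t h_{\lambda,n}=g_{\lambda,n}$. Here Proposition \ref{ftMaps.prop} is decisive: for $\lambda\notin\overline{\Sigma}_t$, the point $\mu:=f_t(\lambda)$ lies in $\mathbb{C}\setminus\mathrm{supp}\,\nu_t$, so the resolvent $\omega\mapsto 1/(\omega-\mu)^{n}$ is bounded and continuous on $\mathrm{supp}\,\nu_t$ and in particular lies in $L^{2}(\partial\mathbb{D},\nu_t)$. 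I would then try to show that $h_{\lambda,n}$ is, up to an explicit factor depending only on $\mu$ and $\chi_t$, this resolvent function. The natural route is the conditional-expectation description $(\mathscr{G}_t h)(b_t)=\tau[h(b_t u_t)\mid b_t]$ from Section \ref{freeSBT.sec}, combined with the multiplicative free subordination relating the Cauchy transform of $b_t u_t$ to that of $b_t$; the subordination function should coincide with $f_t$, essentially because $f_t=\chi_t^{\langle -1\rangle}$ in \eqref{e.ft}. Alternatively, one can verify the formula by direct contour-integration using the integral kernel \eqref{GtIntegral}, exploiting the fact that its poles at $\chi_t(\omega)$ and $1/\overline{\chi_t(\omega)}$ lie on $\partial\Sigma_t$ while $\lambda\notin\overline{\Sigma}_t$.

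Once $h_{\lambda,n}$ is constructed, the local $L^{2}$-bound is automatic: by the Biane isometry,
\[
\|(b_t-\lambda)^{-n}\|_{L_{\mathrm{hol}}^{2}(b_t,\tau)}=\|g_{\lambda,n}\|_{\mathscr{A}_t}=\|h_{\lambda,n}\|_{L^{2}(\partial\mathbb{D},\nu_t)},
\]
and the right-hand side depends continuously on $\lambda\in\mathbb{C}\setminus\overline{\Sigma}_t$ because it is controlled by the distance from $f_t(\lambda)$ to $\mathrm{supp}\,\nu_t$, which is bounded below on compact subsets of $\mathbb{C}\setminus\overline{\Sigma}_t$ by the injectivity in Proposition \ref{ftMaps.prop}. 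The principal obstacle will be pinning down the precise preimage formula for $h_{\lambda,n}$: establishing rigorously a multiplicative subordination identity of the form $\tau[(b_t u_t-\mu)^{-n}\mid b_t]=c(\mu,t)\,(b_t-\lambda)^{-n}$ with $\mu=f_t(\lambda)$, and thereby identifying the subordination function with $f_t$, is the link that ties the abstract support criterion of Theorem \ref{BrownAndL2Spec.thm} to the explicit region $\overline{\Sigma}_t$ and completes the proof of Theorem \ref{main.thm}.
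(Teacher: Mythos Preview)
Your overall strategy is the same as the paper's: transfer the problem via the unitary
$\mathscr{S}_{s,t}\colon L^{2}(\partial\mathbb{D},\nu_{s})\to L_{\mathrm{hol}}^{2}(b_{s,t},\tau)$
(equivalently $\mathscr{G}_t$ composed with evaluation), exhibit an explicit preimage
$h_{\lambda,n}$, and read off the local $L^{2}$ bounds from the isometry.  The key use
of Proposition~\ref{ftMaps.prop} to guarantee $f_t(\lambda)\notin\mathrm{supp}\,\nu_t$
is exactly what the paper does.

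There is, however, a genuine error in your conjectured form of the preimage.  The
subordination identity
$\tau[(b_t u_t-\mu)^{-n}\mid b_t]=c(\mu,t)\,(b_t-\lambda)^{-n}$
holds only for $n=1$; for $n\ge 2$ the preimage is \emph{not} a scalar multiple of
$(\omega-\mu)^{-n}$.  The transform $\mathscr{G}_t$ is not multiplicative, so one cannot
pass from the $n=1$ identity to higher powers by taking products.  The paper's
preimage for general $n$ is
\[
h_{\lambda,n}(\omega)=\frac{1}{(n-1)!}\left(\frac{\partial}{\partial\lambda}\right)^{n-1}
\left[\frac{f_t(\lambda)}{\lambda}\,\frac{1}{\omega-f_t(\lambda)}\right],
\]
which, because the differentiation is in $\lambda$ (not in $\mu=f_t(\lambda)$), is a
\emph{polynomial} in $1/(\omega-f_t(\lambda))$ with $\lambda$-dependent coefficients, not
a single term.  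This is proved by induction: once the $n=1$ case is known, one differentiates
$\mathscr{S}_{s,t}(r_\lambda)$ in $\lambda$ and checks, by multiplying through by
$(b_t-\lambda)^{n+1}$, that the result inverts $(b_t-\lambda)^{n+1}$.

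For the base case $n=1$, the paper does not use subordination or the integral kernel
\eqref{GtIntegral} directly.  Instead it expands $(b_{s,t}-\lambda)^{-1}$ as a norm-convergent
power series for $|\lambda|$ large (respectively small), applies $\mathscr{S}_{s,t}^{-1}$
term by term, and identifies the result using the generating-function formula for
$\Pi(s,t,\omega,z)$ from \cite[Theorem~1.17]{DHKLargeN}.  This gives the explicit
$r_\lambda^{s,t}(\omega)=\frac{f_{s,t}(\lambda)}{\lambda}(\omega-f_{s,t}(\lambda))^{-1}$
on two open sets; the identity $(b_{s,t}-\lambda)\mathscr{S}_{s,t}(r_\lambda^{s,t})=1$ is
then extended to all of $\mathbb{C}\setminus\overline{\Sigma}_{s,t}$ by weak holomorphy
in $\lambda$.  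This analytic-continuation step also handles the verification that the
candidate is an inverse uniformly in $t$ (including $t=4$), whereas your
``multiplication by $z$ on $\mathscr{A}_t$'' argument relies on the RKHS structure of
$\mathscr{A}_t$ and so needs separate treatment when $t=4$.
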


When $t\neq4$, the proof of this lemma draws on the transform $\mathscr{G}_{t}$
in Theorem \ref{bianeSBT.thm}. We will show that the function
$1/(z-\lambda)^{n}$ belongs to the space $\mathscr{A}_{t}$ of holomorphic
functions on $\Sigma_{t}$, at which point Theorem \ref{bianeSBT.thm} tells us
that there is a corresponding element $(b_t-\lambda)^{-n}$, which will be
the inverse of $(b_t-\lambda)^{n}$. We demonstrate this key fact --- that
$1/(z-\lambda)^{n}$ belongs to the space
$\mathscr{A}_{t}=\mathrm{Image}(\mathscr{G}_{t})$ ---
by explicitly constructing the preimage of $1/(z-\lambda)^{n}$ in
$L^{2}(\partial\mathbb{D},\nu_{t})$. Specifically, using results from
\cite{BianeJFA} or \cite{DHKLargeN} about the generating function of the
transform $\mathscr{G}_{t}$, we will show that, for all
$\omega\in\mathrm{supp}(\nu_{t})\subset \partial\mathbb{D}$,
\[
(\mathscr{G}_{t})^{-1}\left(\frac{1}{(\,\cdot\,-\lambda)^{n}}\right)(\omega)  =\frac
{1}{(n-1)!}\left(  \frac{\partial}{\partial\lambda}\right)  ^{n-1}\left[
\frac{f_{t}(\lambda)}{\lambda}\frac{1}{\omega-f_{t}(\lambda)}\right].
\]
Recall from Section \ref{domains.sec} that $f_{t}$ maps the complement of
$\overline{\Sigma}_{t}$ to the complement of $\mathrm{supp}\,\nu_{t}$. It follows
that the function on the right-hand side is bounded---and therefore square
integrable---on $\mathrm{supp}\,\nu_{t}$, for all $\lambda\in\mathbb{C}%
\setminus\overline{\Sigma}_{t}$.  When $t=4$, the proof is very similar, except that
now we have to bypass the space $\mathscr{A}_{t}$ and go directly from
$L^{2}(\partial\mathbb{D},\nu_{t})$ to $L_{\mathrm{hol}}^{2}(b_{t},\tau)$.

The $n=2$ case of Theorem \ref{L2Inverse.thm} shows that Theorem
\ref{BrownAndL2Spec.thm} applies, and we conclude that the Brown measure of
$b_{t}$ is supported in $\overline{\Sigma}_{t}$.

\section{The two-parameter case\label{twoParameter.sec}}

In this section, we discuss the generalization of the process $b_t$ and the Segal--Bargmann transform
to the two-parameter setting $b_{s,t}$ of \cite{DHKLargeN,Ho,KempLargeN}; since $b_t = b_{t,t}$, we will prove the
single-time theorems as stated as special cases of the general two-parameter framework.
We mostly follow the notation in \cite[Section 2.5]{Ho}.

\subsection{Brownian motions}

Fix positive real numbers $s$ and $t$ with $s>t/2$. Let $\{x_{r}\}_{r\ge0}$ and
$\{y_{r}\}_{r\ge0}$ be freely independent free additive Brownian motions in a tracial von Neumann algebra 
$(\mathcal{B},\tau)$, with time-parameter
denoted by $r$. Now define%
\[
w_{s,t}(r)=\sqrt{s-\frac{t}{2}}~x_{r}+i\sqrt{\frac{t}{2}}~y_{r},
\]
which we call a free elliptic $(s,t)$ Brownian motion. The particular
dependence of the coefficients on $s$ and $t$ is chosen to match the
two-parameter Segal--Bargmann transform, which will be discussed below.
Note: when $s=t$,
$$w_{t,t}(r) = \sqrt{\frac{t}{2}}(x_r+iy_r) = \sqrt{t}c_r$$
in terms of the free circular Brownian $c_r$ motion in Section \ref{section SDEs}.

We now define a \textquotedblleft free multiplicative $(s,t)$ Brownian
motion\textquotedblright\ $b_{s,t}(r)$ as a solution to the free stochastic
differential equation%
\begin{equation}
db_{s,t}(r)=i~b_{s,t}(r)~dw_{s,t}(r)-\frac{1}{2}(s-t)b_{s,t}(r)\,dr\label{bstSDE}%
\end{equation}
subject to the initial condition $b_{s,t}(0)=1$. (The second term on the
right-hand side of \eqref{bstSDE} is an It\^{o} correction term that can be
eliminated by writing the equation as a Stratonovich SDE.) We also use the
notation%
\begin{equation}
b_{s,t}=b_{s,t}(1).\label{bstDef}%
\end{equation}
When $s=t$, \eqref{bstSDE}
becomes
\[ db_{t,t}(r) = b_{t,t}(r)\,i\sqrt{t}\, dc_r. \]
Using the fact (from the usual Brownian scaling and rotational invariance) that the process $i\sqrt{t}c_r$
has the same law as the process $c_{tr}$, we see that $b_{t,t} = b_{t,t}(1)$ has the same noncommutative
distribution as the free multiplicative Brownian motion $b_{t}$.  On the other hand, the
limiting case $(s,t)=(1,0)$ gives a free unitary Brownian motion $b_{1,0}(r) = u_r$, cf.\ \eqref{e.free.SDEs}.

We can regard $b_{s,t}(r)$ as the large-$N$ limit of a certain Brownian motion
on the general linear group $\mathrm{GL}(N;\mathbb{C})$ as follows. We define an inner
product $\left\langle \cdot,\cdot\right\rangle _{s,t}$ on the Lie algebra
$\mathfrak{gl}(N;\mathbb{C})$ by%
\[
\left\langle X_{1}+iY_{1},X_{2}+iY_{2}\right\rangle _{s,t}=\frac{N}%
{\sqrt{s-t/2}}\left\langle X_{1},X_{2}\right\rangle +\frac{N}{\sqrt{t/2}%
}\left\langle Y_{1,}Y_{2}\right\rangle ,
\]
where $X_{1}$, $X_{2}$, $Y_{1}$, and $Y_{2}$ are in the Lie algebra $\mathfrak{u}(N)$ of
$\mathrm{U}(N)$ and where the inner products on the right-hand side are the standard
Hilbert--Schmidt inner product $\langle X,Y\rangle = \mathrm{Trace}(Y^\ast X)$. We extend
this inner product to a left-invariant Riemannian metric on $\mathrm{GL}(N;\mathbb{C})$ and we then let%
\[
B_{s,t}^{N}(r)
\]
be the Brownian motion with respect to this metric.  In \cite{KempLargeN}, the second author
showed that the process $B_{s,t}^{N}(\cdot)$ converges (in the sense of Definition \ref{def.limit.process})
 to the process $b_{s,t}(\cdot)$, for all positive real numbers $s$ and $t$ with $s>t/2$.
(We are translating the results of \cite{KempLargeN} into the parametrizations used in \cite{Ho}.)

\subsection{Segal--Bargmann transform}

Meanwhile, the first author and Driver introduced in \cite{DH} a
\textquotedblleft two-parameter\textquotedblright\ Segal--Bargmann transform;
see also \cite{Ha1999}. In the case of the unitary group $\mathrm{U}(N)$, the transform
is a unitary map
\[
\mathscr{G}_{s,t}^{N}:L^{2}(\mathrm{U}(N),\rho_{s})\rightarrow\mathcal{H}L^{2}(\mathrm{GL}(N;\mathbb{C}%
),\gamma_{s,t}),
\]
where $\rho_{s}$ is the same heat kernel measure as in the one-parameter
transform, but evaluated at time $s$, and where $\gamma_{s,t}$ is a heat
kernel measure on $\mathrm{GL}(N;\mathbb{C})$. Specifically, $\gamma_{s,t}$ is the
distribution of the Brownian motion $B_{s,t}^{N}(r)$ at $r=1$. The transform
itself is defined precisely as in the one-parameter case:%
\[
\mathscr{G}_{s,t}^{N}f=(e^{t\Delta/2}f)_{\mathbb{C}}\text{;}%
\]
only the inner products on the domain and range have changed. When $s=t$, the
transform $\mathscr{G}_{s,t}^{N}$ coincides with the one-parameter transform
$\mathscr{G}_{t}^{N}$.

In \cite{DHKLargeN}, the authors and Driver showed that the transform
$\mathscr{G}_{s,t}^{N}$ has limiting properties as $N\rightarrow\infty$ similar to those
of $\mathscr{G}_{t}^{N}$. Specifically, for each Laurent polynomial $p$ in one variable,
we showed that there is a unique Laurent polynomial $q_{s,t}$ in one variable
such that (abusing notation slightly)
\[
\mathscr{G}_{s,t}^{N}(p(U))=q_{s,t}(Z)+O(1/N^{2}),\quad Z\in \mathrm{GL}(N;\mathbb{C}).
\]
As an example, if $\,p(u)=u^{2}$, then $q_{s,t}(z)=e^{-t}(z^{2}-te^{-(s-t)/2}%
z)$, so that the transform of the matrix-valued function $F\colon U\mapsto U^{2}$ on
$\mathrm{U}(N)$ satisfies
\[
(\mathscr{G}_{s,t}^{N}F)(Z)=e^{-t}(Z^{2}-te^{-(s-t)/2}Z)+O(1/N^{2}),\quad Z\in
\mathrm{GL}(N;\mathbb{C}).
\]
(See \cite[p.\ 2592]{DHKLargeN}.)

In \cite{Ho}, Ho then constructed an integral transform $\mathscr{G}_{s,t}$
mapping $L^{2}(\partial\mathbb{D},\nu_{s})$ into a space of holomorphic functions on a
certain domain $\Sigma_{s,t}$ in the plane. Ho's transform $\mathscr{G}_{s,t}$
is uniquely determined by the fact that%
\[
\mathscr{G}_{s,t}(p)=q_{s,t}%
\]
for all Laurent polynomials $p$. Ho gave a description of $\mathscr{G}_{s,t}$
in terms of free probability similar to the description of Biane's transform
$\mathscr{G}_{t}$ given in Section \ref{freeSBT.sec}, and he proved a unitary
isomorphism theorem similar to Biane's result described in Theorem
\ref{bianeSBT.thm}.

\subsection{The domains $\Sigma_{s,t}$}

Ho's domains have the property that\textit{ }$f_{s-t}$\textit{ maps the
complement of }$\Sigma_{s}$\textit{ to the complement of }$\Sigma_{s,t}$. That
is to say, $\Sigma_{s,t}$ is the complement of $f_{s-t}(\mathbb{C}\setminus\Sigma_{s})$:%
\begin{equation}
\Sigma_{s,t}=\mathbb{C}\setminus f_{s-t}(\mathbb{C}\setminus\Sigma_{s})).
\label{SigmastDefn}%
\end{equation}
(See Figure \ref{two_stplots.fig} along with  \cite[Figures 2 and 3]{Ho}.)
Note that $\Sigma_{t,t}$ is the same as $\Sigma_{t}$. The topology of the
domain $\Sigma_{s,t}$ is determined by $s$; it is simply connected for
$s\leq4$ and doubly connected for $s>4$.%

\begin{figure}[htpb]%
\centering
\includegraphics[scale=0.6]
{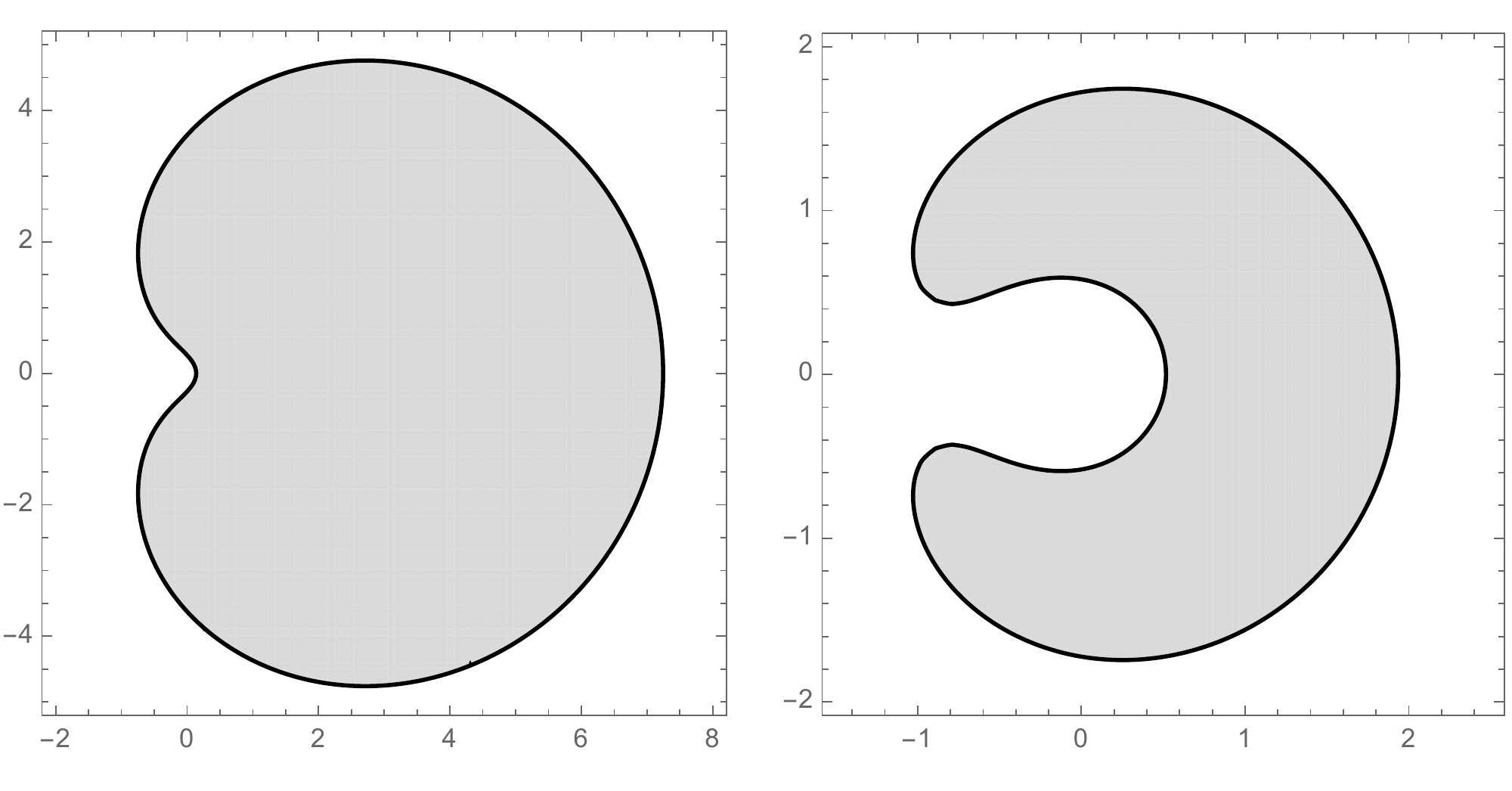}%
\caption{The domain $\Sigma_{s}$ with $s=3$ (left) and the domain
$\Sigma_{s,t}$ with $s=3$, $t=1$ (right). The map $f_{s-t}=f_{2}$ takes the
complement of the domain on the left to the complement of the domain on the
right}%
\label{two_stplots.fig}%
\end{figure}

We need a two-parameter version of Proposition \ref{ftMaps.prop}. To formulate
the correct generalization, we first note that the function $f_{s}$ satisfies%
\[
f_{s}(0)=0;\quad f_{s}^{\prime}(0)=e^{s/2}\neq0.
\]
Thus, $f_{s}$ has a local inverse defined near zero, which we denote by
$\chi_{s}$. Recall from \eqref{e.supp.nut} that the support of the
measure $\nu_{s}$ is a proper arc inside the unit circle for $s<4$ and the
whole unit circle for $s\geq4$.

\begin{proposition}
For all $s>0$, $\chi_{s}$ can be extended uniquely to a
holomorphic function on $\mathbb{C}\setminus\mathrm{supp}\,\nu_{s}$ satisfying%
\begin{equation}
\chi_{s}(1/z)=1/\chi_{s}(z). \label{chiProperty}%
\end{equation}

\end{proposition}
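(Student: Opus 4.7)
The plan is to identify the desired extension with the global holomorphic inverse of $f_s$ supplied by Proposition \ref{ftMaps.prop}, and to extract the symmetry \eqref{chiProperty} from a reflection identity satisfied by $f_s$ itself. The first step, on which everything else rests, is to verify
\[ f_s(1/z) \;=\; 1/f_s(z). \]
Starting from $f_s(z) = z\,e^{(s/2)(1+z)/(1-z)}$ and using $(1+1/z)/(1-1/z) = (z+1)/(z-1) = -(1+z)/(1-z)$, a direct substitution gives
\[ f_s(1/z) \;=\; \tfrac{1}{z}\exp\!\Bigl(-\tfrac{s}{2}\tfrac{1+z}{1-z}\Bigr) \;=\; \tfrac{1}{f_s(z)}. \]

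Next, I would construct the extension itself. Proposition \ref{ftMaps.prop} provides an injective holomorphic map $f_s\colon \mathbb{C}\setminus\overline{\Sigma}_s \to \mathbb{C}\setminus\mathrm{supp}\,\nu_s$ that is onto, and hence a biholomorphism; let $F = f_s^{-1}$ denote its holomorphic inverse. Since $0 \notin \overline{\Sigma}_s$ (recalled at the end of Section \ref{domains.sec}), while $f_s(0) = 0$ and $f_s'(0) = e^{s/2} \neq 0$, the germ of $F$ at $0$ is the unique local holomorphic inverse of $f_s$ sending $0$ to $0$, which is exactly $\chi_s$. Thus $F$ furnishes a holomorphic extension of $\chi_s$ to all of $\mathbb{C}\setminus\mathrm{supp}\,\nu_s$.

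For the symmetry, I would first observe that both $\mathrm{supp}\,\nu_s$ and $\overline{\Sigma}_s$ are invariant under $z \mapsto 1/z$: on $\partial\mathbb{D}$ this map coincides with conjugation, and the support arc in \eqref{e.supp.nut} is symmetric under $\theta\mapsto -\theta$; likewise, the level-set formula $T(\lambda) = |\lambda-1|^2\log(|\lambda|^2)/(|\lambda|^2-1)$ from Section \ref{domains.sec} satisfies $T(1/\lambda) = T(\lambda)$ by direct substitution. Given $w \neq 0$ in the domain, set $z = F(w) \in \mathbb{C}\setminus\overline{\Sigma}_s$; the functional equation then yields $f_s(1/z) = 1/w$, and since $1/z$ lies in $\mathbb{C}\setminus\overline{\Sigma}_s$ by the invariance above, the injectivity of $f_s$ forces $F(1/w) = 1/z = 1/F(w)$, which is \eqref{chiProperty}.

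Finally, for uniqueness: when $\mathbb{C}\setminus\mathrm{supp}\,\nu_s$ is connected (the support being a proper arc for small $s$), any holomorphic extension of the germ $\chi_s$ at $0$ is uniquely determined by analytic continuation, and the symmetry is then automatic. When the complement of the support is disconnected (splitting into $\mathbb{D}$ and its exterior once the support fills the circle), the extension to the component containing $0$ is still pinned down by analytic continuation, and the symmetry \eqref{chiProperty} then determines the extension uniquely on the other component (since $w\mapsto 1/w$ is a biholomorphism between the two components). The only genuine content of the argument is the reflection identity $f_s(1/z)=1/f_s(z)$, which encodes the fact that $f_s$ intertwines $z\mapsto 1/z$ on source and target; I do not anticipate any substantive obstacle beyond this elementary computation.
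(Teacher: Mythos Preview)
Your argument is correct. The paper does not actually supply a proof of this proposition; it only appends the remark that for $s\ge 4$ the complement of $\mathrm{supp}\,\nu_s$ is disconnected, so the content is that $\chi_s$ extends to the open unit disk (already established via \eqref{e.chit}--\eqref{e.ft}), after which \eqref{chiProperty} serves to \emph{define} $\chi_s$ on the exterior. Your write-up fills in precisely what the paper leaves implicit, and does so along the same lines: the extension is the global inverse of $f_s$ furnished by Proposition~\ref{ftMaps.prop}, and the reflection identity $f_s(1/z)=1/f_s(z)$ (which the paper also invokes later, in the proof of Theorem~\ref{L2inv.thm}) yields the symmetry. Your additional verification that $\overline{\Sigma}_s$ is invariant under $z\mapsto 1/z$ via $T(1/\lambda)=T(\lambda)$ is a clean way to justify that $1/F(w)$ lands in the right domain; the paper takes this for granted.
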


Note that when $s\geq4$, the support of $\nu_{s}$ is the entire unit circle,
in which case $\mathbb{C}\setminus\mathrm{supp}\,\nu_{s}$ is a disconnected
set. For such values of $s$, the proposition is really asserting just that
$\chi_{s}$ extends from a neighborhood of the origin to the open unit disk, at
which point \eqref{chiProperty} serves to \textit{define} $\chi_{s}(z)$ for
$\left\vert z\right\vert >1$.

For $s>t/2$, define a function $\chi_{s,t}$ by%
\begin{equation}
\chi_{s,t}=f_{s-t}\circ\chi_{s}. \label{chistDef}%
\end{equation}
Since $\chi_{s}$ maps $\mathbb{C}\setminus\mathrm{supp}\,\nu_{s}$
holomorphically to a region that does not include 1, we see that $\chi_{s,t}$
can also be defined holomorphically on $\mathbb{C}\setminus\mathrm{supp}\,\nu_{s}$.
Ho established the following result, generalizing Proposition
\ref{ftMaps.prop}. (See \cite[Section 4.2]{Ho}, including the discussion
following Remark 4.7.)

\begin{proposition}
\label{twoParamDomains.prop}For all positive numbers $s$ and $t$ with $s>t/2$,
define $\Sigma_{s,t}$ by \eqref{SigmastDefn}. Then the function $\chi_{s,t}$
maps $\mathbb{C}\setminus\mathrm{supp}(\nu_{s})$ injectively onto the
complement of $\overline{\Sigma}_{s,t}$. We denote the inverse function by
$f_{s,t}$, so that
\[
f_{s,t}\colon\mathbb{C}\setminus\overline{\Sigma}_{s,t}\rightarrow\mathbb{C}%
\setminus\mathrm{supp}\,\nu_{s}.
\]

\end{proposition}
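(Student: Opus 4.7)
The plan is to factor $\chi_{s,t}=f_{s-t}\circ\chi_s$ and analyze the two factors separately. Since $\chi_s$ is by construction the (suitably extended) inverse of $f_s$, Proposition \ref{ftMaps.prop} immediately tells us that $\chi_s$ is a biholomorphism from $\mathbb{C}\setminus\mathrm{supp}\,\nu_s$ onto $\mathbb{C}\setminus\overline{\Sigma}_s$. The whole claim therefore reduces to showing that $f_{s-t}$ restricts to a biholomorphism from $\mathbb{C}\setminus\overline{\Sigma}_s$ onto $\mathbb{C}\setminus\overline{\Sigma}_{s,t}$, after which composition gives the stated inverse $f_{s,t}$.

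For the image identification, note that the definition \eqref{SigmastDefn} directly encodes $f_{s-t}(\mathbb{C}\setminus\Sigma_s)=\mathbb{C}\setminus\Sigma_{s,t}$. Upgrading to closures amounts to checking that no point of $\mathbb{C}\setminus\overline{\Sigma}_s$ maps into $\partial\Sigma_{s,t}$ and that $\partial\Sigma_s$ maps onto $\partial\Sigma_{s,t}$; both follow from the open mapping theorem together with continuity of $f_{s-t}$ up to $\partial\Sigma_s$, which stays safely away from the singularity $z=1$ since $1\in\Sigma_s$ for every $s>0$.

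The core step is injectivity of $f_{s-t}$ on $\mathbb{C}\setminus\overline{\Sigma}_s$. When $s\ge t$, write $r:=s-t\ge 0$; the level-set description $\Sigma_r=\{T<r\}$ gives monotonicity $\overline{\Sigma}_r\subseteq\overline{\Sigma}_s$, and so $\mathbb{C}\setminus\overline{\Sigma}_s\subseteq\mathbb{C}\setminus\overline{\Sigma}_r$. Proposition \ref{ftMaps.prop}, applied with parameter $r$, yields injectivity of $f_r$ on the larger domain and hence on the smaller one, handling this regime essentially for free.

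The genuinely new case is the regime $t/2<s<t$, where $s-t$ is negative and Proposition \ref{ftMaps.prop} is not directly available. I would exploit three symmetries: a direct computation shows $f_r(1/z)=1/f_r(z)$ for all real $r$; the proposition preceding the main statement records $\chi_s(1/z)=1/\chi_s(z)$; and the function $T$ underlying the level-set description of $\Sigma_s$ is itself invariant under $\lambda\mapsto 1/\lambda$, so $\overline{\Sigma}_s$ is stable under inversion. Combining these, one can reduce the injectivity claim to the portion of $\mathbb{C}\setminus\overline{\Sigma}_s$ lying strictly outside the closed unit disk, where a winding-number / argument-principle analysis along $\partial\Sigma_s$---in the spirit of the arguments used by Biane \cite{BianeJFA} and Ho \cite{Ho} to establish Proposition \ref{ftMaps.prop}---forces $f_{s-t}$ to be univalent. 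I expect this negative-parameter case to be the technical crux of the argument, and in practice the cleanest route is to follow the detailed conformal-mapping analysis of Section 4.2 of \cite{Ho}.
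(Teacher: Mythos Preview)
The paper does not actually prove this proposition; it attributes the result to Ho and directs the reader to \cite[Section 4.2]{Ho} (see the sentence immediately preceding the statement). So there is no in-paper argument to compare against.

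On its own merits, your outline is the natural one and is consistent with the way the paper sets things up: factor $\chi_{s,t}=f_{s-t}\circ\chi_s$, identify $\chi_s$ with the inverse map furnished by Proposition~\ref{ftMaps.prop} (this identification is correct, but note it requires a word of justification when $s\ge 4$ since $\mathbb{C}\setminus\mathrm{supp}\,\nu_s$ is disconnected and the two definitions of $\chi_s$ must be matched on both components via the symmetry $\chi_s(1/z)=1/\chi_s(z)$), and then analyze $f_{s-t}$ on $\mathbb{C}\setminus\overline{\Sigma}_s$. Your handling of the range $s\ge t$ via the level-set monotonicity $\overline{\Sigma}_{s-t}\subseteq\overline{\Sigma}_s$ and Proposition~\ref{ftMaps.prop} at parameter $s-t$ is clean and correct.

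You rightly identify the regime $t/2<s<t$ (negative parameter in $f_{s-t}$) as the genuine crux, but your proposal for it is really a pointer back to \cite{Ho} rather than an argument---so in that case you are no more self-contained than the paper itself. If you want an independent proof, the negative-parameter univalence of $f_{s-t}$ on $\mathbb{C}\setminus\overline{\Sigma}_s$ is precisely the piece you would have to carry out in full.
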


Note that, at least for sufficiently small $z$, we have $f_{s,t}(z)=f_{s}(\chi_{s-t}(z))$,
by taking inverses in \eqref{chistDef}.

\subsection{The main result}

We are now ready to state the two-parameter version of Theorem \ref{main.thm}.

\begin{theorem}
\label{twoParam.thm}Let $b_{s,t}$ be the free multiplicative Brownian motion
with parameters $(s,t)$, as in \eqref{bstDef}. Then the support of the Brown
measure $\mu_{b_{s,t}}$ is contained in $\overline{\Sigma}_{s,t}$.
\end{theorem}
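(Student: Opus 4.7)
The plan is to mirror the one-parameter strategy outlined in Section \ref{outline.sec}, substituting Ho's two-parameter transform $\mathscr{G}_{s,t}$ and the map $f_{s,t}$ from Proposition \ref{twoParamDomains.prop} for $\mathscr{G}_t$ and $f_t$. By Theorem \ref{BrownAndL2Spec.thm}, applied to $a = b_{s,t}$ in $(\mathcal{B},\tau)$, it suffices to show that for every $\lambda_0 \in \mathbb{C}\setminus\overline{\Sigma}_{s,t}$ there is a neighborhood of $\lambda_0$ throughout which $(b_{s,t}-\lambda)^2$ has an inverse in $L^2(\mathcal{B},\tau)$ with uniformly bounded $L^2$-norm. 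Thus the core task is to establish the two-parameter analog of Theorem \ref{L2Inverse.thm}: for each $\lambda \in \mathbb{C}\setminus\overline{\Sigma}_{s,t}$ and each positive integer $n$, the operator $(b_{s,t}-\lambda)^n$ has an inverse in $L^2_{\mathrm{hol}}(b_{s,t},\tau)$, with the norm controlled locally in $\lambda$.

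To produce this inverse, I would first handle the generic case $s \neq 4$ where Ho's transform $\mathscr{G}_{s,t}\colon L^2(\partial\mathbb{D},\nu_s)\to\mathscr{A}_{s,t}\subset\mathcal{H}(\Sigma_{s,t})$ is a unitary isomorphism, and $\mathscr{A}_{s,t}$ is identified with $L^2_{\mathrm{hol}}(b_{s,t},\tau)$ by evaluation on $b_{s,t}$. The key observation is that the function $z\mapsto 1/(z-\lambda)^n$ is holomorphic on $\overline{\Sigma}_{s,t}$ whenever $\lambda \notin \overline{\Sigma}_{s,t}$, so the question is whether it actually lies in the image $\mathscr{A}_{s,t}$ of $\mathscr{G}_{s,t}$. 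I would construct the $L^2(\partial\mathbb{D},\nu_s)$-preimage explicitly: using the generating-function identity for $\mathscr{G}_{s,t}$ from \cite{Ho,DHKLargeN}, which identifies $\mathscr{G}_{s,t}\bigl(\omega\mapsto \frac{1}{\omega - \zeta}\bigr)$ for $|\zeta|$ small with a rational function determined by $\chi_{s,t}$, I would obtain the analog of the one-parameter formula
\[
(\mathscr{G}_{s,t})^{-1}\!\left(\frac{1}{(\,\cdot\,-\lambda)^{n}}\right)(\omega)
= \frac{1}{(n-1)!}\left(\frac{\partial}{\partial\lambda}\right)^{n-1}\!\left[\frac{f_{s,t}(\lambda)}{\lambda}\cdot\frac{1}{\omega - f_{s,t}(\lambda)}\right],
\]
valid for $\lambda \in \mathbb{C}\setminus\overline{\Sigma}_{s,t}$. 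Proposition \ref{twoParamDomains.prop} then ensures $f_{s,t}(\lambda) \notin \mathrm{supp}\,\nu_s$, so the displayed preimage is continuous (hence bounded) in $\omega \in \mathrm{supp}\,\nu_s$. Continuity in $\lambda$ yields the required local $L^2$-bound.

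For the transitional value $s=4$, where $\mathscr{G}_{s,t}$ is not known to be injective and $\mathscr{A}_{s,t}$ need not be well defined, I would bypass $\mathscr{A}_{s,t}$ entirely and work with the isometric extension $p \mapsto (\mathscr{G}_{s,t}p)(b_{s,t})$ from Laurent polynomials into $L^2_{\mathrm{hol}}(b_{s,t},\tau)$, as was done in the one-parameter $t=4$ case. Applying this isometry to a sequence of Laurent polynomials on $\partial\mathbb{D}$ that converges in $L^2(\partial\mathbb{D},\nu_s)$ to the boundary trace of the explicit kernel above produces a well-defined element of $L^2_{\mathrm{hol}}(b_{s,t},\tau)$, and multiplying by $(b_{s,t}-\lambda)^n$ on either side — verified first on polynomials, then extended by continuity — identifies this element as the desired $L^2$-inverse. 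Taking $n=2$, Theorem \ref{BrownAndL2Spec.thm} delivers Theorem \ref{twoParam.thm}.

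The main obstacle I anticipate is verifying the generating-function identity for $\mathscr{G}_{s,t}^{-1}$ in the form stated above. The single-time formula in Theorem \ref{L2Inverse.thm} follows from a computation of \cite{BianeJFA,DHKLargeN} exploiting $f_t = \chi_t^{\langle -1 \rangle}$; here one has instead $f_{s,t}(z) = f_s(\chi_{s-t}(z))$ near $0$, so the derivation is structurally the same but the inversion of $\chi_{s,t} = f_{s-t}\circ\chi_s$ must be tracked carefully, and the extension of $\chi_s$ beyond the unit disk (via $\chi_s(1/z) = 1/\chi_s(z)$) has to be combined consistently with the definition of $\Sigma_{s,t}$ in \eqref{SigmastDefn}. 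Once this formula is secured on a neighborhood of a generic $\lambda_0 \notin \overline{\Sigma}_{s,t}$, the remainder of the argument is uniform in the parameters $(s,t)$.
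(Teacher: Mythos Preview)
Your proposal is correct and follows essentially the same route as the paper. The one organizational difference worth noting is that, instead of splitting into the cases $s\neq 4$ and $s=4$, the paper introduces from the outset the map $\mathscr{S}_{s,t}(p)=(\mathscr{G}_{s,t}p)(b_{s,t})$ --- exactly your ``isometric extension'' from the $s=4$ case --- observes it is unitary from $L^2(\partial\mathbb{D},\nu_s)$ onto $L^2_{\mathrm{hol}}(b_{s,t},\tau)$ for \emph{all} $s$, and works with it uniformly; your displayed preimage formula is then established by expanding $(b_{s,t}-\lambda)^{-1}$ as a power series for $|\lambda|$ very large and very small, applying $\mathscr{S}_{s,t}^{-1}$ termwise via the generating function $\Pi$ of \cite{DHKLargeN}, and analytically continuing the identity $(b_{s,t}-\lambda)\,\mathscr{S}_{s,t}(r_\lambda^{s,t})=1$ across each component of $\mathbb{C}\setminus\overline{\Sigma}_{s,t}$.
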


As in the one-parameter case, we expect that the limiting empirical eigenvalue distribution for $B_{s,t}^N:=B_{s,t}^N(1)$ will also be supported in $\Sigma_{s,t}$. This is supported by simulations; see Figure \ref{2stevalplots.fig}.  More generally,
we expect that the empirical eigenvalue distribution of the Brownian motion $B_{s,t}^{N}$ in $\mathrm{GL}(N;\mathbb{C})$ 
will converge almost surely to $\mu_{b_{s,t}}$ as $N\rightarrow\infty$.  This question will be explored in a future paper.

\begin{remark} While they do not determine the Brown measure, it is worth noting that the values
of the holomorphic moments of $b_{s,t}$ were computed in \cite{KempLargeN}:
\begin{equation}
\tau(b_{s,t}^{n})=\nu_{n}(s-t)\label{bstMoments}%
\end{equation}
for all $n\in\mathbb{Z}$ and all $s>t/2>0$; here $\nu_n(r)$ are the moments of $u_r$, cf.\ \eqref{nutMoments}.
In particular, when $s=t$, since $\nu_n(0)=1$ for all $n$, this recovers the fact that all holomorphic moments of $b_t$
are $1$. \end{remark}

\begin{figure}[htpb]%
\centering
\includegraphics[scale=0.6]
{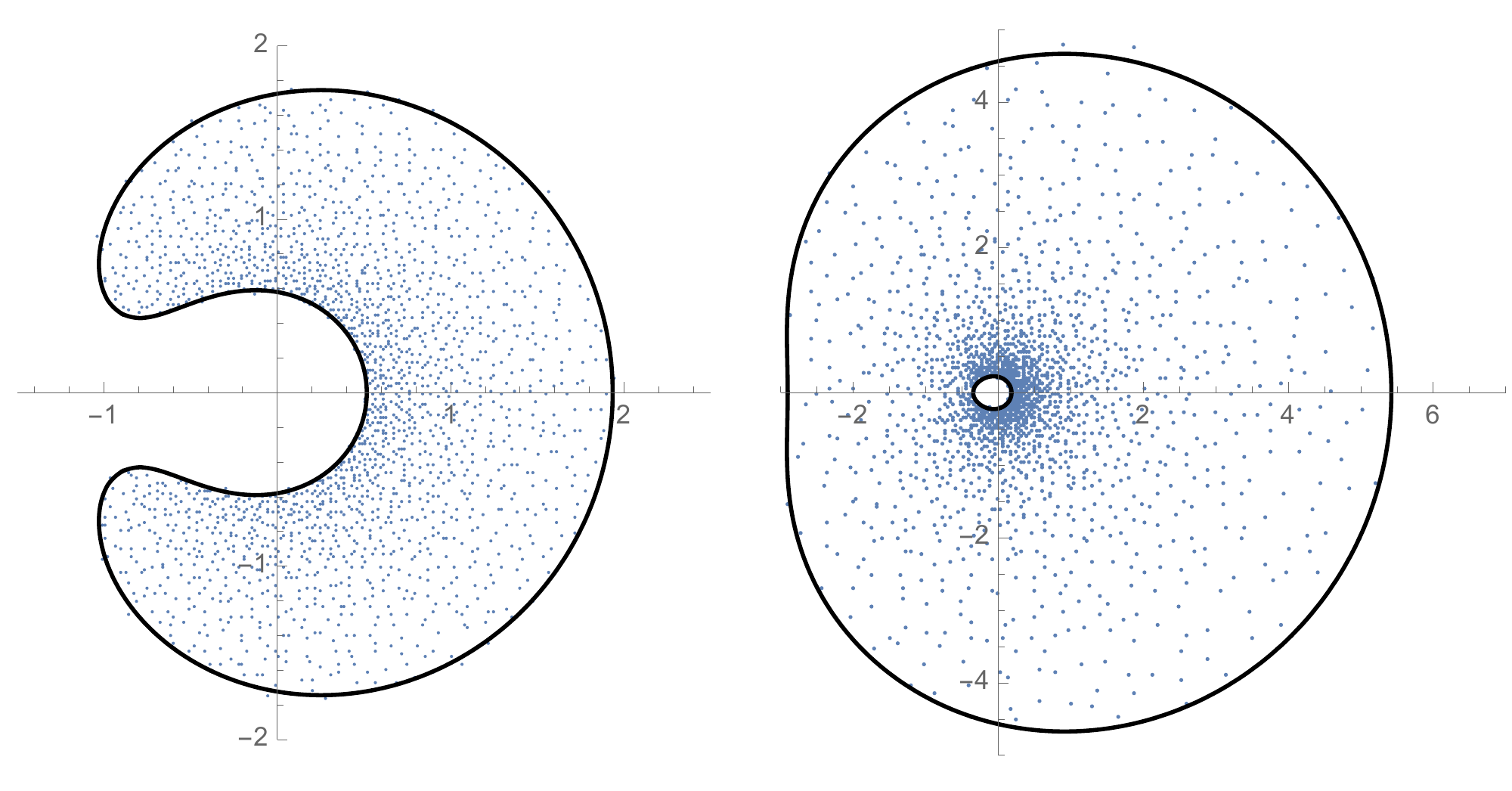}%
\caption{Simulations of the Brownian motions $B_{s,t}^N$ for $N=2000$ and $(s,t)=(3,1)$ (left) and $(s,t)=(5,3)$ (right), plotted against the domains $\Sigma_{s,t}$}%
\label{2stevalplots.fig}%
\end{figure}

\begin{remark} \label{remark.physics} Now having fully defined the two-paramer
Brownian motion and associated notation, we show how to connect the
formulas in the physics papers \cite{Nowak,Lohmayer} to our formulas.
In \cite{Nowak}, the boundary of the domain is given by Eq. (83),
which is easily seen to be equivalent to our condition for the boundary of $%
\Sigma _{t},$ namely $\left\vert f_{t}(\lambda )\right\vert =1$ with $%
\left\vert \lambda \right\vert \neq 1.$ In \cite{Lohmayer}, the boundary of
the domain in the one-parameter case is given by Eq. (5.31), which agrees
with Eq. (83) in \cite{Nowak}. In \cite{Lohmayer}, meanwhile, the boundary
of the domain in the two-parameter case is given by Eq. (6.37), which is
equivalent to saying that their variable $\hat{z}$ should belong to the
boundary of the domain $\Sigma _{t_{+}}.$ Meanwhile, using Eqs. (6.30) and
(6.36) and a bit of algebra, we find that $\hat{z}$ is related to $z$ by the
equation%
\[
z=f_{t_{-}}(\hat{z}). 
\]%
We conclude that the boundary of their domain is computed as $%
f_{t_{-}}(\partial \Sigma _{t_{+}}).$ This agrees with the boundary of our
domain $\Sigma _{s,t}$, provided we identify their parameters $t_{+}$ and $%
t_{-}$ with our $s$ and $s-t,$ respectively.

\end{remark}

\section{Proofs\label{proofs.sec}}

In this final section, we present the complete proof of Theorem \ref{twoParam.thm},
which includes the main Theorem \ref{main.thm} as the special case $s=t$.  We follow
the outline of Section \ref{outline.sec}, and will therefore provide proofs of Theorem
\ref{BrownAndL2Spec.thm} and (a two-parameter generalization of) Theorem \ref{L2Inverse.thm}.
We begin with the former.

\subsection{A general result on the support of the Brown measure\label{section L22}}

In this subsection, we work with a general operator in a tracial von Neumann algebra $(\mathcal{A},\tau)$.

For $1\leq p<\infty$, the noncommutative $L^{p}$ norm on $\mathcal{A}$ is
\[
\left\Vert a\right\Vert _{p}=(\tau\lbrack\left\vert a\right\vert ^{p}])^{1/p}.
\]
The {\bf noncommutative $L^{p}$ space} $L^{p}(\mathcal{A},\tau)$ is the completion of $\mathcal{A}$
with respect to this norm; it can be concretely realized as a space of (largely unbounded, densely-defined) operators
affiliated to $\mathcal{A}$.

For $a,b\in\mathcal{A}$ we have the inequality%
\[
\left\Vert ab\right\Vert _{p}\leq\left\Vert a\right\Vert \left\Vert
b\right\Vert _{p}.
\]
This shows that the operation of \textquotedblleft left multiplication by $a$%
\textquotedblright\ is bounded as an operator on $\mathcal{A}$ with respect to the
noncommutative $L^{p}$ norm. Thus, by the bounded linear transformation
theorem (e.g.\ \cite[Theorem I.7]{RS}), left multiplication by $a$ extends
uniquely to a bounded linear map (with the same norm) of $L^{p}(\mathcal{A}%
,\tau)$ to itself. We may easily verify that%
\begin{equation}
(ab)c=a(bc)\label{associativity1}%
\end{equation}
for $a,b\in\mathcal{A}$ and $c\in L^{p}(\mathcal{A},\tau)$; this result holds
when $c\in\mathcal{A}$ and then extends by continuity. Similar results hold
for right multiplication by $a.$ 

Similarly, since%
\begin{equation}
\left\Vert ab\right\Vert _{1}\leq\left\Vert a\right\Vert _{2}\left\Vert
b\right\Vert _{2},\label{L2L1}%
\end{equation}
for all $a,b\in\mathcal{A},$ the product map $(a,b)\mapsto ab$ can be extended
by continuity first in $a$ and then in $b,$ giving a map from~$L^{2}%
(\mathcal{A},\tau)\times L^{2}(\mathcal{A},\tau)\rightarrow L^{1}%
(\mathcal{A},\tau)$ satisfying \eqref{L2L1}. We then observe a simple
\textquotedblleft associativity\textquotedblright\ property for the actions of
$\mathcal{A}$ on $L^{1}$ and $L^{2}$:%
\begin{equation}
a(bc)=(ab)c;\quad(bc)a=b(ca).\label{associativity2}%
\end{equation}

For any $1\leq p<\infty$, we say that
an element $a\in\mathcal{A}$ \textbf{has an inverse in} $L^{p}$ if there is an
element $b$ of the noncommutative $L^{p}$ space $L^{p}(\mathcal{A},\tau)$ such
that $ab=ba=1$.

\begin{definition}
\label{LpnSpec.def} Let $a\in\mathcal{A}$, $n\in\mathbb{N}$, and $p\ge 1$.
We say that $\lambda_{0}$ belongs to the $L_{n}^{p}%
$\textbf{-resolvent set} of $a$ if $(a-\lambda)^{n}$ has an inverse, denoted
$(a-\lambda)^{-n}$, in $L^{p}$ for all $\lambda$ in a neighborhood of
$\lambda_{0}$ and $\left\Vert (a-\lambda)^{-n}\right\Vert _{L^{p}}$ is bounded
near $\lambda_{0}$. We say that $\lambda_{0}$ is in the $L_{n}^{p}%
$\textbf{-spectrum of }$a$ if $\lambda_{0}$ is not in the $L_{n}^{p}%
$-resolvent set of $a$. We denote the $L_{n}^{p}$-spectrum of $a$ by
$\mathrm{spec}_{n}^{p}(a)$.
\end{definition}

Note that if $a-\lambda_{0}$ has a bounded inverse, then so does $a-\lambda$
for all $\lambda$ sufficiently near $\lambda_{0}$, and $\left\Vert
(a-\lambda)^{-1}\right\Vert _{\mathcal{A}}$---and therefore $\left\Vert
(a-\lambda)^{-n}\right\Vert _{\mathcal{A}}$---is automatically bounded near
$\lambda_{0}$. It follows that any point in the ordinary resolvent set of $a$
is also in the $L_{n}^{p}$-resolvent set; equivalently,%
\[
\mathrm{spec}_{n}^{p}(a)\subset\mathrm{spec}(a),
\]
where $\mathrm{spec}(a)$ is the ordinary spectrum of $a$.  Theorem \ref{BrownAndL2Spec.thm} may then be restated as follows, strengthening the
standard result that the Brown measure is supported on $\mathrm{spec}(a)$.

\begin{theorem}
\label{L22support.thm}The support of the Brown measure $\mu_{a}$ of $a$
is contained in $\mathrm{spec}_{2}^{2}(a)$.
\end{theorem}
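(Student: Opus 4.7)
The plan is to apply Corollary~\ref{BrownSupport.cor}: to conclude $\lambda_0 \notin \mathrm{supp}(\mu_a)$, it suffices to establish that
\[
I_\varepsilon(\lambda) := \tau\!\left[(a_\lambda^* a_\lambda + \varepsilon)^{-1}(a_\lambda a_\lambda^* + \varepsilon)^{-1}\right]
\]
is bounded uniformly in $\varepsilon > 0$ and $\lambda$ in a neighborhood of $\lambda_0$. The hypothesis provides a neighborhood $U$ of $\lambda_0$ on which $c_\lambda := (a-\lambda)^{-2}$ exists in $L^2(\mathcal{A}, \tau)$ with $\|c_\lambda\|_{L^2}$ uniformly bounded, so the task reduces to proving the key estimate $I_\varepsilon(\lambda) \leq \|c_\lambda\|_{L^2}^2$ for all $\lambda \in U$ and $\varepsilon > 0$.

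Setting $A := a_\lambda^* a_\lambda$ and $B := a_\lambda a_\lambda^*$, the strategy is monotone convergence combined with an $L^2$-inverse computation. For the monotonicity, $(A+\varepsilon)^{-1}$ and $(B+\varepsilon)^{-1}$ are bounded positive operators that increase in the positive-operator sense as $\varepsilon \searrow 0$, and the identity
\[
\tau[P_2 Q_2] - \tau[P_1 Q_1] = \tau[(P_2 - P_1) Q_2] + \tau[P_1 (Q_2 - Q_1)] \geq 0
\]
(valid for $0 \leq P_1 \leq P_2$ and $0 \leq Q_1 \leq Q_2$, each summand being $\tau$ of a product of positive operators, hence nonnegative) shows $\varepsilon \mapsto I_\varepsilon(\lambda)$ is monotone. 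Hence
\[
I_\varepsilon(\lambda) \leq \lim_{\varepsilon \to 0^+} I_\varepsilon(\lambda) = \tau[A^{-1} B^{-1}] \in [0, \infty],
\]
where $A^{-1}, B^{-1}$ are the (possibly unbounded) positive self-adjoint operators affiliated to $\mathcal{A}$ defined by spectral calculus and $\tau[A^{-1} B^{-1}]$ is interpreted as $\|A^{-1/2} B^{-1/2}\|_{L^2}^2 \in [0, \infty]$.

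The remaining step identifies $\tau[A^{-1} B^{-1}]$ with $\|c_\lambda\|_{L^2}^2$. From $a_\lambda^2 c_\lambda = c_\lambda a_\lambda^2 = 1$, one verifies that $c_\lambda$ commutes with $a_\lambda$: compute $a_\lambda^2(a_\lambda c_\lambda - c_\lambda a_\lambda) = a_\lambda - a_\lambda = 0$ and left-multiply by $c_\lambda$ to cancel the $a_\lambda^2$. Hence $d_\lambda := a_\lambda c_\lambda = c_\lambda a_\lambda \in L^2$ satisfies $a_\lambda d_\lambda = d_\lambda a_\lambda = 1$ and $d_\lambda^2 = c_\lambda$, serving as an $L^2$-inverse of $a_\lambda$ (with $d_\lambda^*$ likewise inverting $a_\lambda^*$). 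This yields $A^{-1} = d_\lambda d_\lambda^*$ and $B^{-1} = d_\lambda^* d_\lambda$ in $L^1$, and trace cyclicity on positive operators gives
\[
\tau[A^{-1} B^{-1}] = \tau[d_\lambda d_\lambda^* d_\lambda^* d_\lambda] = \tau[(d_\lambda^*)^2 d_\lambda^2] = \tau[c_\lambda^* c_\lambda] = \|c_\lambda\|_{L^2}^2.
\]
The main technical obstacle is the careful handling of the generally unbounded operators $A^{-1}$ and $B^{-1}$: their product falls outside any standard $L^p(\mathcal{A},\tau)$ space, so the cyclicity step will be justified either via the spectral-theoretic extension of $\tau$ to positive operators affiliated with $\mathcal{A}$ (in which cyclicity holds in $[0,\infty]$) or by approximation, replacing $A^{-1}, B^{-1}$ throughout by the bounded $(A+\delta)^{-1}, (B+\delta)^{-1}$ where all algebra is transparent, and passing to the limit $\delta \searrow 0$ via monotone convergence of $\tau$ on the positive cone.
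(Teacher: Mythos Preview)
Your high-level strategy—bounding $I_\varepsilon(\lambda)$ by $\|c_\lambda\|_{L^2}^2$ and invoking Corollary~\ref{BrownSupport.cor}—is exactly the paper's, and your monotonicity argument for $\varepsilon\mapsto I_\varepsilon$ is correct. The gap is in the identification step, and your proposed fixes do not close it.

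The product $A^{-1}B^{-1}=d_\lambda d_\lambda^* d_\lambda^* d_\lambda$ involves four $L^2$ factors; a priori it lives in no $L^p$ with $p\ge 1$, and your cyclicity move $\tau[d_\lambda d_\lambda^* d_\lambda^* d_\lambda]=\tau[(d_\lambda^*)^2 d_\lambda^2]$ is of the form $\tau[xy]=\tau[yx]$ with $x=d_\lambda\in L^2$ and $y=(d_\lambda^*)^2 d_\lambda=c_\lambda^* d_\lambda\in L^1$, so $xy$ need not be in $L^1$. The approximation route does not help: once you replace $A^{-1},B^{-1}$ by $(A+\delta)^{-1},(B+\delta)^{-1}$ you lose the exact factorizations $A^{-1}=d_\lambda d_\lambda^*$, $B^{-1}=d_\lambda^* d_\lambda$, and with them the link to $c_\lambda$. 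Indeed, writing $d_\lambda=u|d_\lambda|$ with $u$ unitary, one finds $\lim_{\varepsilon\to 0}I_\varepsilon=\tau[\,|d_\lambda|\,u\,|d_\lambda|^2\,u^*\,|d_\lambda|\,]$ whereas $\|c_\lambda\|_2^2=\tau[\,|d_\lambda|\,u^*\,|d_\lambda|^2\,u\,|d_\lambda|\,]$; equating these again requires cyclicity with $|d_\lambda|^2\in L^1$ only, which is exactly the obstruction.

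The paper avoids ever forming $A^{-1}B^{-1}$. It proves $I_\varepsilon\le\|c_\lambda\|_2^2$ directly by a two-step operator-monotone comparison (this is Proposition~\ref{epsilonToZero.prop}): first $(A+\varepsilon)^{1/2}(B+\varepsilon)(A+\varepsilon)^{1/2}\ge (A+\varepsilon)^{1/2}B(A+\varepsilon)^{1/2}$ yields $I_\varepsilon\le\tau[B^{-1}(A+\varepsilon)^{-1}]$; then $a_\lambda^*(A+\varepsilon)a_\lambda\ge (a_\lambda^*)^2 a_\lambda^2$ yields $\tau[B^{-1}(A+\varepsilon)^{-1}]\le\tau[((a_\lambda^*)^2 a_\lambda^2)^{-1}]=\|c_\lambda\|_2^2$. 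At each stage both positive operators being compared are invertible in $L^1$ (this is where Lemmas~\ref{inverseProperties.lem}--\ref{l.aa*.a*a.L1.inv} enter), and every cyclicity used is an $L^2\times L^2$ or $L^1\times L^\infty$ pairing. This two-step conjugation trick is precisely what sidesteps the obstacle you flagged.
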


The concept of ``$L^2$ spectrum'' has come up in prior literature, most notably in the previously mentioned paper \cite{HaagerupLarsen} of Haagerup and Larsen on the Brown measures of $\mathscr{R}$-diagonal operators.  As noted at the end of Section \ref{section Brown measure}, if $a$ is $\mathscr{R}$-diagonal, its Brown measure is rotationally invariant. What is more, Haagerup and Larsen show (1) that the support of the Brown measure of $a$ is the closed disk of radius of $\|a\|_2$, if $a$ \textit{does not} have an inverse in $L^2$, and (2) that the support of the Brown measure of $a$ is the closed annulus with inner radius $1/\|a^{-1}\|_2$ and outer radius $\|a\|_2$, if $a$ \textit{does} have an inverse in $L^2$. 

Thus, in the notation introduced above, for an $\mathscr{R}$-diagonal element $a$, the point $0$ belongs to the support of the Brown measure if and only if 0 is in the $L^2_1$-spectrum of $a$. It is, at first, surprising that Haagerup and Larsen's result is for the $L^2_1$-spectrum, whereas Theorem \ref{L22support.thm} is for the $L^2_2$-spectrum. There is, however, a simple explanation for this apparent discrepancy.  In the case that $a$ is $\mathscr{R}$-diagonal, the restricted form of the free cumulants (cf.\ \cite[Lecture 15]{NS}) implies that $\|a^{-2}\|_2 = \|a^{-1}\|_2^2$; hence $0\in\mathrm{spec}^2_1(a)$ if and only if $0\in\mathrm{spec}_2^2(a)$. 

Thus, our condition in Theorem \ref{L22support.thm} is closely related to the one in \cite{HaagerupLarsen}. Indeed, Theorem \ref{L22support.thm} can be thought of as an extension of the line of reasoning in \cite{HaagerupLarsen} to general, non-$\mathscr{R}$-diagonal operators. 

It is natural to wonder, from the above definitions, how far the $L^p_n$-spectra of an operator may differ from each other, and from the actual spectrum.  To this end, Haagerup and Larsen give examples where $\mathrm{spec}_1^2(a) \ne \mathrm{spec}(a)$.  Let $h$ be a positive semi-definite operator that is not invertible in $\mathcal{A}$ but {\em is} invertible in $L^2(\mathcal{A},\tau)$ (i.e.\ its distribution $\mu_h$ has mass in every neighborhood of $0$, but $\int x^{-2}\mu_h(dx) < \infty$).  If $u$ is a Haar unitary operator freely independent from $h$, then $a=uh$ is an $\mathscr{R}$-diagonal operator for which $a^{-1}\in L^2(\mathcal{A},\tau)\setminus\mathcal{A}$; in other words $0\in\mathrm{spec}(a)\setminus\mathrm{spec}_1^2(a)$, so $\mathrm{spec}_1^2(a)\subsetneq\mathrm{spec}(a)$.  What's more, in this case, $\mathrm{spec}(a)$ is the full closed disk of radius $\|a\|_2$, while $\mathrm{spec}_1^2(a)$ is the afore-mentioned annulus, cf.\ \cite[Proposition 4.6]{HaagerupLarsen}.  Hence, the support of the Brown measure, and more generally the sets $\mathrm{spec}_n^p(a)$, may be substantially smaller than the spectrum of $a$.

To prove Theorem \ref{L22support.thm}, we need is the following.

\begin{proposition}
\label{epsilonToZero.prop}Suppose $a\in\mathcal{A}$ and $a^{2}$ has an
inverse, denoted $a^{-2}$, in $L^{2}(\mathcal{A},\tau)$. Then for all
$\varepsilon>0$, we have%
\begin{equation}
\tau\lbrack(a^{\ast}a+\varepsilon)^{-1}(aa^{\ast}+\varepsilon)^{-1}%
]\leq\left\Vert a^{-2}\right\Vert _{L^{2}(\mathcal{A},\tau)}^{2}.
\label{epsilonIneq}%
\end{equation}
\end{proposition}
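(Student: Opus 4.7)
The plan is to make rigorous the intuitive chain of estimates suggested by the sketch in Section \ref{outline.sec}, namely
\[
\tau\bigl[(a^*a+\varepsilon)^{-1}(aa^*+\varepsilon)^{-1}\bigr] \leq \tau\bigl[(a^*a)^{-1}(aa^*)^{-1}\bigr] = \tau\bigl[a^{-2}(a^{-2})^*\bigr] = \|a^{-2}\|_2^2.
\]
The key inequality will follow from operator monotonicity of the inverse combined with monotonicity of the trace, while the middle equality is pure trace cyclicity applied to positive operators.

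Writing $h = a^*a$ and $k = aa^*$, I would first verify that both $h$ and $k$ have trivial kernel as operators on $L^2(\mathcal{A},\tau)$, so that $h^{-1}$ and $k^{-1}$ are well-defined positive self-adjoint (in general unbounded) operators affiliated with $\mathcal{A}$. Indeed, if $h\xi = 0$ then $\|a\xi\|_2 = 0$, hence $a^2\xi = 0$, and left-multiplying by the hypothesized $a^{-2} \in L^2$ (via the associativity property \eqref{associativity2}) gives $\xi = 0$; the argument for $k$ is identical with $(a^*)^{-2} = (a^{-2})^*$ playing the role of $a^{-2}$. The spectral theorem then yields the form inequalities $(h+\varepsilon)^{-1} \leq h^{-1}$ and $(k+\varepsilon)^{-1} \leq k^{-1}$, from the pointwise estimate $(\lambda+\varepsilon)^{-1} \leq \lambda^{-1}$ on $(0,\infty)$.

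To derive the trace inequality, I would apply twice the standard monotonicity principle that, for $0 \leq A \leq A'$ with $A$ bounded and $B \geq 0$, the rewriting $\tau[AB] = \tau[B^{1/2}AB^{1/2}]$ together with positivity of $B^{1/2}(A'-A)B^{1/2}$ and normality of $\tau$ gives $\tau[AB] \leq \tau[A'B]$ (with $+\infty$ allowed on the right). Chaining two such applications yields
\[
\tau\bigl[(h+\varepsilon)^{-1}(k+\varepsilon)^{-1}\bigr] \leq \tau\bigl[h^{-1}(k+\varepsilon)^{-1}\bigr] \leq \tau\bigl[h^{-1}k^{-1}\bigr].
\]
Finally, using $h^{-1} = a^{-1}(a^*)^{-1}$ and $k^{-1} = (a^*)^{-1}a^{-1}$ together with trace cyclicity on positive operators gives $\tau[h^{-1}k^{-1}] = \tau[a^{-2}(a^{-2})^*] = \|a^{-2}\|_2^2$, which is finite by hypothesis.

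The main technical obstacle is that $h^{-1}$ and $k^{-1}$ are typically unbounded, so each of the trace-monotonicity and cyclicity manipulations must be justified in the affiliated-operator setting. I would handle this uniformly by spectral truncation: replace $h^{-1}$ and $k^{-1}$ by the bounded elements $h^{-1}\,E_h([1/N,\|h\|])$ and $k^{-1}\,E_k([1/N,\|k\|])$ of $\mathcal{A}$, establish each inequality and equality with these bounded replacements (where everything reduces to elementary manipulations of bounded elements), and then pass to the limit $N\to\infty$ by monotone convergence of the normal, faithful trace $\tau$.
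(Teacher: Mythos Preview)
Your approach and the paper's share the same skeleton---operator monotonicity of the inverse plus trace cyclicity---but diverge at the second step in a way that leaves a real gap in your version.

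Your chain passes through the quantity $\tau[h^{-1}k^{-1}]$, and you propose to identify it with $\|a^{-2}\|_2^2$ by ``trace cyclicity on positive operators,'' made rigorous via spectral truncation of $h^{-1}$ and $k^{-1}$. The difficulty is twofold. First, $h^{-1}k^{-1}$ is not positive, and as a product of two $L^1$ elements it need not lie in $L^1$, so its trace is not a priori defined. Second, and more seriously, the spectral projections $E_h([1/N,\|h\|])$ and $E_k([1/N,\|k\|])$ have no tractable relationship to $a$ or $a^{-2}$; replacing $h^{-1},k^{-1}$ by their truncations gives you bounded operators, but you then have no way to evaluate $\tau[h^{-1}p_N\,k^{-1}q_N]$ in terms of $a^{-2}$, so the claimed equality cannot be recovered in the limit. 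The cyclicity you need is $\tau[a^{-1}\cdot(a^*)^{-2}a^{-1}]=\tau[(a^*)^{-2}a^{-2}]$, which would require the second factor to lie in $L^2$; but $(a^*)^{-2}a^{-1}$ is only known to be in $L^1$.

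The paper avoids this entirely by never forming $\tau[h^{-1}k^{-1}]$. After the first step (which matches yours up to swapping $h$ and $k$, and lands at $\tau[(aa^*)^{-1}(a^*a+\varepsilon)^{-1}]$, a product of an $L^1$ element and a bounded one), the paper conjugates by the \emph{bounded} operator $a$ rather than by the unbounded $h^{-1/2}$: from $a^*(a^*a+\varepsilon)a\geq (a^*)^2a^2$ one obtains, via Lemma~\ref{l.inv.monotone} applied to two positive elements each invertible in $L^1$, the bound $\tau[(aa^*)^{-1}(a^*a+\varepsilon)^{-1}]\leq\tau[((a^*)^2a^2)^{-1}]$. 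The right-hand side is now the trace of the $L^1$ inverse of the single positive operator $(a^2)^*a^2$, which equals $\|a^{-2}\|_2^2$ immediately. The point is that conjugating by $a$ keeps every comparison between positive elements of $\mathcal{A}$ with $L^1$ inverses, so Lemmas~\ref{posInv.lem}--\ref{l.aa*.a*a.L1.inv} apply directly and no truncation is needed.
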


Theorem \ref{L22support.thm} follows immediately from Proposition \ref{epsilonToZero.prop} together
with Corollary \ref{BrownSupport.cor}. To prove the proposition, we need the following lemmas.

\begin{lemma}
\label{inverseProperties.lem}For $a,b\in\mathcal{A},$ if $a$ is invertible in
$\mathcal{A}$ and $b$ is invertible in $L^{p},$ then $ab$ and $ba$ are
invertible in $L^{p}$ with inverses $b^{-1}a^{-1}$ and $a^{-1}b^{-1},$
respectively. If $a$ and $b$ are invertible in $L^{2}$ then $ab$ is invertible
in $L^{1}$ with inverse $b^{-1}a^{-1}.$ Finally, if $a$ is invertible
in~$L^{p}$ then $a^{\ast}$ is invertible in $L^{p}$ with inverse
$(a^{-1})^{\ast}.$ 
\end{lemma}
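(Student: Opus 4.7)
The three parts are each direct verifications of candidate inverses; the plan is to combine the associativity properties of the $\mathcal{A}$-bimodule structure on $L^p(\mathcal{A},\tau)$ with continuity and density arguments. For the first claim, I would set $c = b^{-1}a^{-1}$, which is well-defined in $L^p$ since $L^p$ is a right $\mathcal{A}$-module. Using \eqref{associativity1} for the grouping $(ab)c = a(bc)$, together with the mixed associativity $b(b^{-1}a^{-1}) = (bb^{-1})a^{-1}$ (which follows from \eqref{associativity1} applied with the roles of factors permuted), one collapses $(ab)(b^{-1}a^{-1})$ to $a\cdot 1\cdot a^{-1} = 1$. The identity $(b^{-1}a^{-1})(ab)=1$ is computed analogously using the right-module associativity in \eqref{associativity2}, and the corresponding statement for $ba$ is symmetric.

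For the second claim, the product $b^{-1}a^{-1}$ now lives in $L^1$ via the bilinear extension from $L^2 \times L^2$ guaranteed by \eqref{L2L1}. My plan is to approximate $a^{-1},b^{-1}\in L^2$ by sequences $\alpha_n,\beta_n\in\mathcal{A}$, so that $\beta_n\alpha_n \to b^{-1}a^{-1}$ in $L^1$ by \eqref{L2L1}. Inside $\mathcal{A}$ the intrinsic product $a b \beta_n \alpha_n$ reorganizes as $a(b\beta_n)\alpha_n$ by ordinary associativity; using continuity of left multiplication by $b$ on $L^2$ (so $b\beta_n \to 1$), and then by $a$, together with right multiplication by $\alpha_n$ tending to $a^{-1}$, this tends to $a\cdot 1\cdot a^{-1}=1$. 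On the other hand, by continuity of left multiplication by $ab$ as an operator $L^1\to L^1$, the same sequence tends to $(ab)(b^{-1}a^{-1})$, so the two limits agree. A symmetric computation establishes $(b^{-1}a^{-1})(ab)=1$.

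For the third claim, I would invoke the standard fact that the involution extends to a conjugate-linear isometry of $L^p(\mathcal{A},\tau)$ (immediate from $\|c^*\|_p = \|c\|_p$, which holds on $\mathcal{A}$ by tracality and passes to the completion), so $(a^{-1})^*\in L^p$. The identity $(xy)^* = y^*x^*$ extends from $\mathcal{A}\times\mathcal{A}$ to $\mathcal{A}\times L^p$ by continuity of the $*$-operation and of left and right multiplication by $x$; applying $*$ to $aa^{-1} = a^{-1}a = 1$ then yields $(a^{-1})^* a^* = a^* (a^{-1})^* = 1$, as required. The main obstacle throughout is bookkeeping: the symbol $xy$ refers to three distinct multiplications---inside $\mathcal{A}$, the action of $\mathcal{A}$ on $L^p$, and the bilinear $L^2\times L^2 \to L^1$ product---and one must track which associativity and continuity statement from \eqref{associativity1}--\eqref{associativity2} applies at each step; no new estimates beyond those already in place are required.
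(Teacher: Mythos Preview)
Your proposal is correct and follows essentially the same approach as the paper. The only difference is in the second claim: where you run an explicit approximation argument with sequences $\alpha_n,\beta_n\in\mathcal{A}$, the paper simply invokes the associativity identity \eqref{associativity2} (for $a\in\mathcal{A}$ and $b,c\in L^2$) directly, which already encapsulates that density/continuity argument; so your version just unpacks what \eqref{associativity2} is doing.
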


\begin{proof}
For $a,b\in\mathcal{A}$ with $b$ invertible in $L^{p},$ we have%
\begin{align*}
b^{-1}a^{-1}(ab)  & =((b^{-1}a^{-1})a)b=((b^{-1}(a^{-1}a))b\\
& =b^{-1}b=1,
\end{align*}
where we have used \eqref{associativity1} twice, and similarly for the product
in the other order. A similar argument, using both \eqref{associativity1} and
\eqref{associativity2}, verifies the second claim. Finally, the identity
$(ab)^{\ast}=b^{\ast}a^{\ast}$, which holds initially for $a,b\in\mathcal{A}$,
extends by continuity to the case $a\in\mathcal{A},$ $b\in L^{p}%
(\mathcal{A},\tau).$ Thus, if $a$ is invertible in $L^{p},$ then $a^{\ast
}(a^{-1})^{\ast}=(a^{-1}a)^{\ast}=1$ and similarly for the product in the
reverse order. 
\end{proof}

\begin{lemma}\label{posInv.lem}
Let $x$ be a non-negative element of $\mathcal A$ and suppose $x$ has an inverse in $L^1$. Then
\begin{equation*}
\lim_{\varepsilon\rightarrow 0^+}\tau[(x+\varepsilon)^{-1}]=\tau[x^{-1}].
\end{equation*}
\end{lemma}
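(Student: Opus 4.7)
The plan is to reduce to a classical scalar convergence statement via the functional calculus for the positive operator $x$, and then apply monotone convergence.

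First, because $x \in \mathcal{A}$ is non-negative, it is in particular a bounded self-adjoint (hence normal) element, so it admits a spectral resolution $E^x$ on $[0,\|x\|]$. Let $\mu_x$ denote its spectral distribution, the probability measure on $[0,\|x\|]$ defined by $\mu_x(V) = \tau(E^x(V))$ for Borel $V$, as in Section \ref{section Brown measure}. By the bounded functional calculus, for each $\varepsilon>0$, $(x+\varepsilon)^{-1}$ is a positive bounded operator and
\[
\tau\!\left[(x+\varepsilon)^{-1}\right] = \int_{[0,\|x\|]} \frac{1}{t+\varepsilon}\,d\mu_x(t).
\]

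Next I would translate the hypothesis that $x$ is invertible in $L^1$ into a statement about $\mu_x$. Using the (unbounded) measurable functional calculus for normal operators affiliated to $\mathcal{A}$, the $L^1$-inverse of $x$ must agree with the functional-calculus element $f(x)$ where $f(t)=1/t$, and its affiliation with $\mathcal{A}$ together with membership in $L^1(\mathcal{A},\tau)$ is equivalent to
\[
\mu_x(\{0\}) = 0 \quad\text{and}\quad \int_{(0,\|x\|]} \frac{1}{t}\,d\mu_x(t) < \infty,
\]
with $\tau[x^{-1}] = \int t^{-1}\,d\mu_x(t)$. (Uniqueness of $L^1$-inverses, which follows from Lemma \ref{inverseProperties.lem}-style manipulations in the affiliated algebra, ensures the $L^1$-inverse is nothing but $f(x)$.)

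With these identifications in place, the result is a scalar integration-theoretic fact. For $t>0$ the function $\varepsilon \mapsto 1/(t+\varepsilon)$ increases monotonically to $1/t$ as $\varepsilon\searrow 0$, and it is uniformly dominated by $1/t$. Since $\mu_x(\{0\})=0$, the integrals over $[0,\|x\|]$ and $(0,\|x\|]$ coincide. The monotone convergence theorem (or equivalently dominated convergence with dominating function $1/t$, which is integrable by hypothesis) gives
\[
\lim_{\varepsilon\to 0^+} \int_{(0,\|x\|]} \frac{1}{t+\varepsilon}\,d\mu_x(t) = \int_{(0,\|x\|]} \frac{1}{t}\,d\mu_x(t),
\]
which is exactly the claim $\lim_{\varepsilon\to 0^+}\tau[(x+\varepsilon)^{-1}] = \tau[x^{-1}]$.

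The only nontrivial step is the middle one: identifying the abstract $L^1$-inverse of $x$ with the spectral-calculus function $1/t$ applied to $x$, and identifying its trace with $\int t^{-1}\,d\mu_x$. This is purely a matter of unwinding definitions in the theory of operators affiliated to a finite von Neumann algebra; once it is done, the limit is immediate from monotone convergence.
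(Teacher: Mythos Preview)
Your argument is correct, and its endgame---reducing to monotone or dominated convergence against the spectral measure $\mu_x$---is the same as the paper's. The difference lies in how the key identification ``$\tau[x^{-1}]=\int t^{-1}\,d\mu_x(t)<\infty$'' is obtained. You invoke the theory of closed operators affiliated to a finite von Neumann algebra to say that the abstract $L^1$-inverse must coincide with the functional-calculus element $t\mapsto 1/t$ applied to $x$, and then read off integrability and the trace formula. The paper instead derives this identification from scratch: it first uses the resolvent-type identity $x^{-1}-(x+\varepsilon)^{-1}=\varepsilon\,x^{-1}(x+\varepsilon)^{-1}$ together with $\|(x+\varepsilon)^{-1}\|\le 1/\varepsilon$ to obtain the uniform bound $\|(x+\varepsilon)^{-1}\|_1\le 2\|x^{-1}\|_1$; monotone convergence then gives $\int t^{-1}\,d\mu_x(t)<\infty$, after which dominated convergence shows $(x+\varepsilon)^{-1}$ is Cauchy in $L^1$ and its limit is identified as the $L^1$-inverse of $x$.

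Your route is shorter and perfectly legitimate once one accepts the standard facts about uniqueness of inverses in the $\ast$-algebra of affiliated operators and the trace formula for positive affiliated operators in $L^1$; the paper's route is more self-contained, avoiding any appeal to that background by proving the needed integrability and convergence directly with only the algebra of bounded and $L^1$ elements already set up in the text. If you want your version to be fully self-contained within the paper's framework, you should spell out why $xb=bx=1$ with $b\in L^1$ forces $\mu_x(\{0\})=0$ and $b$ to equal the spectral-calculus inverse, rather than deferring this to ``unwinding definitions.''
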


\begin{proof}
We begin by noting that
\begin{align*}
x^{-1}-(x+\varepsilon)^{-1}&=x^{-1}((x+\varepsilon)-x)(x+\varepsilon)^{-1}\\
&=\varepsilon x^{-1}(x+\varepsilon)^{-1}.
\end{align*}
Now, $\Vert (x+\varepsilon)^{-1}\Vert$ is at most $1/\varepsilon$, by the equality of norm and spectral radius for self-adjoint elements. Thus,
\begin{equation*}
\Vert x^{-1}-(x+\varepsilon)^{-1}\Vert_1 \leq\varepsilon \Vert (x+\varepsilon)^{-1}\Vert\Vert x^{-1}\Vert_1 \leq\Vert x^{-1}\Vert_1 .
\end{equation*}
It follows that $\Vert (x+\varepsilon)^{-1}\Vert_1 \leq 2\Vert x^{-1}\Vert_1$ for all $\varepsilon>0$.

Let $E^x$ be the projection-valued measure associated to $x$ by the spectral theorem. Then 
\begin{equation*}
\Vert (x+\varepsilon)^{-1}\Vert_1 = \int_0^\infty \frac{1}{\lambda+\varepsilon}\,\tau[E^x(d\lambda)]
\leq 2\Vert x^{-1}\Vert_1 .
\end{equation*}
Thus, by monotone convergence, we have
\begin{equation*}
 \int_0^\infty \frac{1}{\lambda}\,\tau[E^x(d\lambda)]\leq 2\Vert x^{-1}\Vert_1 <\infty .
\end{equation*}
Once this is established, we note that for $\lambda>0$,
\begin{equation*}
\left\vert\frac{1}{\lambda}-\frac{1}{\lambda+\varepsilon}\right\vert =\frac{\varepsilon}{\lambda(\lambda+\varepsilon)}\leq \frac{1}{\lambda}.
\end{equation*}
Thus, by dominated convergence, $1/(\lambda+\varepsilon)$ converges in $L^1((0,\infty),\tau\circ E^x)$ to $1/\lambda$ as $\varepsilon\rightarrow 0$. It follows that for any sequence $\{\varepsilon_n\}$ tending to zero, the operators $(x+\varepsilon_n)^{-1}$ form a Cauchy sequence with respect to the noncommutative $L^1$ norm. Since, by dominated convergence, the functions $\lambda/(\lambda+\varepsilon_n)$ converge to 1 in $L^1((0,\infty),\tau\circ E^x)$, we can easily see that the limit of $(x+\varepsilon_n)^{-1}$ in the Banach space $L^1(\mathcal A,\tau)$ is the inverse in $L^1$ of $x$.

We have shown, then, that the (unique) inverse in $L^1$ of $x$ is the limit in $L^1$ of $(x+\varepsilon_n)^{-1}$. Applying $\tau$ to this result gives the claim, along any sequence $\{\varepsilon_n\}$ tending to zero.
\end{proof}

\begin{lemma} \label{l.inv.monotone} Let $x,y\ge 0$ be positive semidefinite operators in $\mathcal{A}$, and suppose that they are invertible in $L^1(\mathcal{A},\tau)$.  If $x\le y$ (i.e.,\ $x-y$ is positive semidefinite) then $\tau(y^{-1})\le\tau(x^{-1})$. \end{lemma}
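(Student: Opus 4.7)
The plan is to use the classical operator monotonicity of $t\mapsto -1/t$ (equivalently, the fact that $0<A\le B$ in $\mathcal{A}$ implies $B^{-1}\le A^{-1}$), combined with the $\varepsilon$-regularization already established in Lemma \ref{posInv.lem}.  Since $x$ and $y$ are only assumed invertible in $L^1(\mathcal{A},\tau)$ and not in $\mathcal{A}$, we must do the comparison at the level of the bounded, strictly positive operators $x+\varepsilon$ and $y+\varepsilon$, and then pass to the limit.

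First, I observe that because $x,y\ge 0$, for every $\varepsilon>0$ both $x+\varepsilon$ and $y+\varepsilon$ are positive elements of $\mathcal{A}$ with spectrum contained in $[\varepsilon,\infty)$, hence invertible in $\mathcal{A}$ with norm of the inverse at most $1/\varepsilon$.  The hypothesis $x\le y$ gives $x+\varepsilon\le y+\varepsilon$.  I then invoke the classical operator-monotonicity fact: conjugating by $(y+\varepsilon)^{-1/2}$ yields
\[
(y+\varepsilon)^{-1/2}(x+\varepsilon)(y+\varepsilon)^{-1/2}\le 1,
\]
and inverting this positive bounded operator (whose spectrum lies in $(0,1]$) and conjugating back gives $(y+\varepsilon)^{-1}\le(x+\varepsilon)^{-1}$ in $\mathcal{A}$.

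Next, since $\tau$ is a positive linear functional, applying it to the inequality $(y+\varepsilon)^{-1}\le(x+\varepsilon)^{-1}$ yields
\[
\tau\bigl[(y+\varepsilon)^{-1}\bigr]\le\tau\bigl[(x+\varepsilon)^{-1}\bigr]
\]
for every $\varepsilon>0$.  Finally, because $x$ and $y$ are assumed to have inverses in $L^1(\mathcal{A},\tau)$, Lemma \ref{posInv.lem} applied separately to $x$ and to $y$ shows that $\tau[(x+\varepsilon)^{-1}]\to\tau[x^{-1}]$ and $\tau[(y+\varepsilon)^{-1}]\to\tau[y^{-1}]$ as $\varepsilon\to 0^+$.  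Passing to the limit in the above inequality gives $\tau(y^{-1})\le\tau(x^{-1})$, as desired.

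There is essentially no obstacle: the only subtlety is that one cannot apply operator monotonicity directly to $x^{-1}$ and $y^{-1}$ because these live in $L^1$ rather than $\mathcal{A}$, but the $\varepsilon$-regularization sidesteps this, and Lemma \ref{posInv.lem} ensures the limit behaves correctly.  No additional machinery beyond what has already been developed in this subsection is required.
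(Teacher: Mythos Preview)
Your proof is correct and follows essentially the same approach as the paper: regularize by adding $\varepsilon$, apply the operator monotonicity of the inverse to the resulting bounded invertible elements, take traces, and then pass to the limit using Lemma~\ref{posInv.lem}. Your version spells out the operator-monotonicity step in slightly more detail than the paper (which simply cites \cite[Prop.~V.1.6]{Bh}), but the argument is the same.
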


\begin{proof} 
For all $\varepsilon >0$, the elements $x+\varepsilon$ and $y+\varepsilon$ are invertible in $\mathcal A$ and $x+\varepsilon\geq y+\varepsilon$. Thus, by the (reverse) operator monotonicity of the inverse \cite[Prop. V.1.6]{Bh}, we have 
\begin{equation}
\tau((y+\varepsilon)^{-1})\geq \tau((x+\varepsilon)^{-1}). 
\end{equation}
By Lemma \ref{posInv.lem}, the claimed result then follows.
\end{proof}

\begin{lemma} \label{l.aa*.a*a.L1.inv} Let $a\in\mathcal{A}$ and suppose that $a^2$ has an inverse in $L^2(\mathcal{A},\tau)$.  Then $a$ and $a^\ast$ have inverses in $L^2$; and hence $a^\ast a$ and $aa^\ast$ have inverses in $L^1(\mathcal{A},\tau)$.  \end{lemma}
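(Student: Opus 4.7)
The plan is to show that $a^2$ being invertible in $L^2$ forces $a$ itself to be invertible in $L^2$; the rest of the conclusion then follows by direct appeal to Lemma \ref{inverseProperties.lem}. Concretely, write $b\in L^2(\mathcal{A},\tau)$ for the inverse of $a^2$, so that $a^2b=ba^2=1$. I propose $c:=ab\in L^2(\mathcal{A},\tau)$ as the candidate inverse of $a$, with the auxiliary element $c':=ba\in L^2(\mathcal{A},\tau)$ playing the role of a left-inverse counterpart.

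To verify that $c$ is a right inverse and $c'$ is a left inverse, I would apply the associativity property \eqref{associativity1} twice:
\[ ac = a(ab) = (aa)b = a^2 b = 1, \qquad c'a = (ba)a = b(aa) = ba^2 = 1. \]
The next step is to identify $c$ with $c'$, which upgrades each of them to a genuine two-sided inverse of $a$ in $L^2$. The usual one-line argument
\[ c' \;=\; c'\cdot 1 \;=\; c'(ac) \;=\; (c'a)c \;=\; 1\cdot c \;=\; c \]
uses a mixed associativity identity $c'(ac)=(c'a)c$, with $c',c\in L^2$ and $a\in\mathcal{A}$ and the intermediate products landing in $L^1$ via \eqref{L2L1}. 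This identity is not literally one of \eqref{associativity1}--\eqref{associativity2}, but it follows from them by exactly the continuity argument used to establish those identities: approximate $c$ and $c'$ in $L^2$-norm by elements of $\mathcal{A}$, invoke ordinary associativity in $\mathcal{A}$, and pass to the $L^1$-limit using Hölder. This step --- justifying the triple-product associativity for an element of $\mathcal{A}$ sandwiched between two $L^2$ elements --- is the only real technical point in the proof.

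Once $a$ is known to have an inverse $c\in L^2(\mathcal{A},\tau)$, the remaining conclusions are immediate invocations of Lemma \ref{inverseProperties.lem}. The ``adjoint'' clause of that lemma gives $(a^\ast)^{-1}=(a^{-1})^\ast=c^\ast\in L^2(\mathcal{A},\tau)$, establishing the $L^2$-invertibility of $a^\ast$. Then the ``$L^2\times L^2\to L^1$'' clause applied to the pairs $(a^\ast,a)$ and $(a,a^\ast)$ produces $(a^\ast a)^{-1}=a^{-1}(a^\ast)^{-1}=cc^\ast\in L^1(\mathcal{A},\tau)$ and $(aa^\ast)^{-1}=(a^\ast)^{-1}a^{-1}=c^\ast c\in L^1(\mathcal{A},\tau)$, completing the proof. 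No new ideas beyond the continuity argument above are required.
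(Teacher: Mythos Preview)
Your proposal is correct and follows essentially the same route as the paper: set $b=a^{-2}\in L^2$, exhibit $ab$ and $ba$ as right and left $L^2$-inverses of $a$ via \eqref{associativity1}, identify them by the standard $c'=c'(ac)=(c'a)c=c$ argument, and then invoke Lemma \ref{inverseProperties.lem} (adjoints, and the $L^2\times L^2\to L^1$ clause) for $a^\ast$, $a^\ast a$, and $aa^\ast$. You are in fact slightly more careful than the paper, which simply cites ``the usual argument and \eqref{associativity1}'' for the identification $c=c'$; you correctly flag that the needed identity $c'(ac)=(c'a)c$ (with $c',c\in L^2$, $a\in\mathcal{A}$) is not literally one of \eqref{associativity1}--\eqref{associativity2}, and your continuity justification is exactly the right way to fill that in.
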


\begin{proof} Let $b=a^{-2}$ denote the $L^2(\mathcal{A},\tau)$ inverse of $a^2$.  First, note that $a(ab) = a^2b = 1$ and $(ba)a = ba^2 =1$.  Since $ab$ and $ba$ are in $L^2(\mathcal{A},\tau)$, it follows that $a$ is both left and right invertible in $L^2(\mathcal{A},\tau)$. Hence, $a$ is invertible in $L^2(\mathcal{A},\tau)$, by the usual argument and \eqref{associativity1}. Taking adjoints shows that $a^\ast$ is also invertible in $L^2(\mathcal{A},\tau)$.  Thus $aa^\ast[(a^\ast)^{-1}a^{-1}] = 1 = [(a^\ast)^{-1}a^{-1}]aa^\ast$.  Since $(a^\ast)^{-1}$ and $a^{-1}$ are in $L^2(\mathcal{A},\tau)$, their product is in $L^1(\mathcal{A},\tau)$ by H\"older's inequality; this shows $aa^\ast$ is invertible in $L^1(\mathcal{A},\tau)$.  An analogous argument holds for $a^\ast a$.  \end{proof}

We are now ready for the proof of Proposition \ref{epsilonToZero.prop}.

\begin{proof}[Proof of Proposition \ref{epsilonToZero.prop}] 
We begin by arguing formally and then fill in the details. We start by noting
that%
\begin{align*}
(a^{\ast}a+\varepsilon)^{1/2}(aa^{\ast}+\varepsilon)(a^{\ast}a+\varepsilon
)^{1/2}-(a^{\ast}a+\varepsilon)^{1/2}(aa^{\ast})(a^{\ast}a+\varepsilon)^{1/2}
&  =\varepsilon(a^{\ast}a+\varepsilon)\\
&  \geq0.
\end{align*}
Thus, by Lemma 6.4 and the cyclic property of the trace, we have%
\begin{align}
\tau\lbrack(a^{\ast}a+\varepsilon)^{-1}(aa^{\ast}+\varepsilon)^{-1}] &
=\tau\lbrack(a^{\ast}a+\varepsilon)^{-1/2}(aa^{\ast}+\varepsilon)^{-1}%
(a^{\ast}a+\varepsilon)^{-1/2}]\nonumber\\
&  \leq\tau\lbrack(a^{\ast}a+\varepsilon)^{-1/2}(aa^{\ast})^{-1}(a^{\ast
}a+\varepsilon)^{-1/2}]\nonumber\\
&  =\tau\lbrack(aa^{\ast})^{-1}(a^{\ast}a+\varepsilon)^{-1}].\label{firstStep}%
\end{align}

We then use the same argument again. We note that%
\begin{equation}
a^{\ast}(a^{\ast}a+\varepsilon)a-a^{\ast}(a^{\ast}a)a=\varepsilon a^{\ast
}a\geq0,\label{secondTerms}%
\end{equation}
from which we obtain%
\begin{align}
\tau\lbrack(aa^{\ast})^{-1}(a^{\ast}a+\varepsilon)^{-1}] &  =\tau\lbrack
a^{-1}(a^{\ast}a+\varepsilon)^{-1}(a^{\ast})^{-1}]\nonumber\\
&  =\tau\lbrack(a^{\ast}(a^{\ast}a+\varepsilon)a)^{-1}]\nonumber\\
&  \leq\tau\lbrack((a^{\ast})^{2}a^{2})^{-1}]\nonumber\\
&  =\tau\lbrack a^{-2}(a^{-2})^{\ast}].\nonumber\\
&  =\left\Vert a^{-2}\right\Vert _{L^{2}(\mathcal{A},\tau)}^{2}%
\label{secondStep}%
\end{align}
Combining \eqref{firstStep} and \eqref{secondStep} gives the desired inequality.

To make the argument rigorous, we need to make sure that all the relevant
inverses exist in $L^{1}(\mathcal{A},\tau),$ so that Lemma 6.4 is applicable.
The operator $(a^{\ast}a+\varepsilon)^{1/2}(aa^{\ast}+\varepsilon)(a^{\ast
}a+\varepsilon)^{1/2}$ is invertible in $\mathcal{A}$ and thus in
$L^{1}(\mathcal{A},\tau).$ On the other hand, by assumption $a^{2}$ is
invertible in $L^{2}(\mathcal{A},\tau)$. It then follows from Lemma 6.5 that
$aa^{\ast}$ is invertible in $L^{1}(\mathcal{A},\tau),$ so that $(a^{\ast
}a+\varepsilon)^{1/2}(aa^{\ast})(a^{\ast}a+\varepsilon)^{1/2}$ is also
invertible in $L^{1}(\mathcal{A},\tau),$ by Lemma \ref{inverseProperties.lem}.
Thus, Lemma 6.4 is applicable in \eqref{firstStep}.

We now consider the two terms on the left-hand side of \eqref{secondTerms}. By
Lemma 6.5, both $a$ and $a^{\ast}$ are invertible in $L^{2}$; it then follows
from Lemma \ref{inverseProperties.lem} that $a^{\ast}(a^{\ast}a+\varepsilon)$
is invertible in $L^{2}$ and that $a^{\ast}(a^{\ast}a+\varepsilon)a$ is
invertible in $L^{1}.$ Meanwhile, $a^{2}$ is, by assumption, invertible in
$L^{2}$; it then follows from Lemma \ref{inverseProperties.lem} that
$(a^{\ast})^{2}$ is invertible in $L^{2}.$ Thus, using Lemma
\ref{inverseProperties.lem} one last time, we conclude that $(a^{\ast}%
)^{2}a^{2}$ is invertible in $L^{1}.$ Thus, Lemma 6.5 is also applicable in
\eqref{secondStep}.
\end{proof}

\subsection{Computing the $L_{2}^{2}$ spectrum\label{computeL2spec.sec}}

We consider the free multiplicative $(s,t)$ Brownian motion $b_{s,t}$ defined
in \eqref{bstDef} as an element of a tracial von Neumann algebra $(\mathcal{B},\tau)$. Recall that when $s=t$, the operator $b_{s,t}$
has the same noncommutative distribution as the ordinary free multiplicative
Brownian motion $b_{t}$ described in Section \ref{section SDEs}. Our
goal is to prove a two-parameter version of Theorem \ref{L2Inverse.thm} from
Section \ref{outline.sec}, by constructing an $L^{2}$ inverse to
$(b_{s,t}-\lambda)^{n}$ for $\lambda$ in the complement of
$\overline{\Sigma}_{s,t}$, with local bounds on the norm of the inverse. This will, in
particular, show that%
\begin{equation}
\mathrm{spec}_{n}^{2}(b_{s,t})\subset\overline{\Sigma}_{s,t} \label{specContain}%
\end{equation}
for all $n$. (Recall Definition \ref{LpnSpec.def}.) If we specialize
\eqref{specContain} to the case $n=2$, Theorem \ref{L22support.thm} will then
tell us that the support of the Brown measure of $b_{s,t}$ is contained
in $\overline{\Sigma}_{s,t}$.

Our tool is the two-parameter \textquotedblleft free Hall
transform\textquotedblright\ $\mathscr{G}_{s,t}$ (cf.\ Section
\ref{twoParameter.sec}), which includes the one-parameter transform
$\mathscr{G}_{t}=\mathscr{G}_{t,t}$ (cf.\ Section \ref{BianesTransform.sec}) as a
special case. To avoid technical issues for the case $s=4$, we introduce a
variant of the transform $\mathscr{G}_{s,t}$, denoted $\mathscr{S}_{s,t}$.
Define%
\[
L_{\mathrm{hol}}^{2}(b_{s,t},\tau)
\]
to be the closure in the noncommutative $L^{2}$ norm of the space of elements
of the form $p(b_{s,t})$, where $p$ is a Laurent polynomial in one variable.

If $\mathcal{P}$ denotes the space of Laurent polynomials in one variable,
then \cite[Theorem 1.13]{DHKLargeN} shows that $\mathscr{G}_{s,t}$ maps
$\mathcal{P}$ bijectively onto $\mathcal{P}$. We then define $\mathscr{S}%
_{s,t}$ initially on $\mathcal{P}$ by evaluating each polynomial
$\mathscr{G}_{s,t}(p)$ on the element $b_{s,t}$:%
\[
\mathscr{S}_{s,t}(p)=\mathscr{G}_{s,t}(p)(b_{s,t})\in L_{\mathrm{hol}}%
^{2}(b_{s,t},\tau).
\]
By \cite[Theorem 5.7]{Ho}, $\mathscr{S}_{s,t}$ maps $\mathcal{P}\subset
L^{2}(\partial\mathbb{D},\nu_{s})$ isometrically into $L_{\mathrm{hol}}^{2}(b_{s,t},\tau)$.
(Although some parts of the just-cited theorem implicitly assume that
$s\neq4$, Part 2 of the theorem does not depend on this assumption.)
Furthermore, since $\mathscr{G}_{s,t}$ maps \textit{onto} $\mathcal{P}$, the
image of $\mathscr{S}_{s,t}$ is dense in $L_{\mathrm{hol}}^{2}(b_{s,t},\tau)$.
Thus, $\mathscr{S}_{s,t}$ extends to a unitary map%
\[
\mathscr{S}_{s,t}:L^{2}(\partial\mathbb{D},\nu_{s})\rightarrow L_{\mathrm{hol}}^{2}%
(b_{s,t},\tau).
\]

For $\lambda\in\mathbb{C}\setminus\overline{\Sigma}_{s,t}$, we will then construct
an inverse $(b_{s,t}-\lambda)^{-n}$ to $(b_{s,t}-\lambda)^{n}$ in
$L_{\mathrm{hol}}^{2}(b_{s,t},\tau)$ by constructing the preimage of
$(b_{s,t}-\lambda)^{-n}$ under $\mathscr{S}_{s,t}$. Note that in Section
\ref{outline.sec}, we outlined the proof in the case $s=t$, with $t\neq4$. In
that case, \cite[Lemma 17]{BianeJFA} allows us to identify
$L_{\mathrm{hol}}^{2}(\mathcal{B},\tau)$ with the space $\mathscr{A}_{t}$, in which case, it
is harmless to work with $\mathscr{G}_{s,t}$ instead of $\mathscr{S}_{s,t}$.

Recall the definition of the function $f_{s,t}$ in Proposition \ref{twoParamDomains.prop}.
\begin{theorem}
\label{L2inv.thm}Define a function $r_{\lambda}^{s,t}\in L^{2}(\partial\mathbb{D},\nu_{s})$
by the formula%
\begin{equation}
r_{\lambda}^{s,t}(\omega)=\frac{f_{s,t}(\lambda)}{\lambda}\left(  \frac
{1}{\omega-f_{s,t}(\lambda)}\right)  ,\quad \omega\in\mathrm{supp}(\nu_{s}),
\label{rstFormula}%
\end{equation}
for all $\lambda\in\mathbb{C}\setminus\overline{\Sigma}_{s,t}$. Then for all such
$\lambda$, we have%
\[
\mathscr{S}_{s,t}(r_{\lambda}^{s,t})=(b_{s,t}-\lambda)^{-1}.
\]
That is to say, $\mathscr{S}_{s,t}(r_{\lambda}^{s,t})$ is an inverse in
$L^{2}$ of $(b_{s,t}-\lambda)$.
\end{theorem}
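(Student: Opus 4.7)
The plan is to identify $r_\lambda^{s,t}$ as the preimage under $\mathscr{G}_{s,t}$ of the resolvent function $(z-\lambda)^{-1}$ on $\Sigma_{s,t}$, and then transfer this to a statement about $\mathscr{S}_{s,t}(r_\lambda^{s,t})$ in $L^{2}(\mathcal{B},\tau)$. As a preliminary step, I would verify that $r_\lambda^{s,t}$ is a bounded continuous function on $\mathrm{supp}(\nu_s)$, hence lies in $L^{2}(\partial\mathbb{D},\nu_{s})$: by Proposition \ref{twoParamDomains.prop}, $\lambda\in\mathbb{C}\setminus\overline{\Sigma}_{s,t}$ forces $f_{s,t}(\lambda)\in\mathbb{C}\setminus\mathrm{supp}(\nu_{s})$, so $|\omega-f_{s,t}(\lambda)|$ is bounded below on $\mathrm{supp}(\nu_{s})$.

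Granting the generating-function identity
\begin{equation}
\mathscr{G}_{s,t}(r_\lambda^{s,t})(z)=\frac{1}{z-\lambda}\qquad\text{for }z\in\Sigma_{s,t},
\label{eq:GenFun-plan}
\end{equation}
the theorem would follow by a standard $L^{2}$-approximation argument. Choose Laurent polynomials $p_{N}\to r_\lambda^{s,t}$ in $L^{2}(\partial\mathbb{D},\nu_{s})$; by isometry of $\mathscr{S}_{s,t}$, $\mathscr{S}_{s,t}(p_{N})=\mathscr{G}_{s,t}(p_{N})(b_{s,t})\to\mathscr{S}_{s,t}(r_\lambda^{s,t})$ in $L^{2}(\mathcal{B},\tau)$. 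Since left multiplication by $b_{s,t}-\lambda\in\mathcal{B}$ is bounded on $L^{2}(\mathcal{B},\tau)$, one obtains
\[
(b_{s,t}-\lambda)\mathscr{S}_{s,t}(p_{N})=\bigl[(z-\lambda)\mathscr{G}_{s,t}(p_{N})(z)\bigr](b_{s,t})\longrightarrow(b_{s,t}-\lambda)\mathscr{S}_{s,t}(r_\lambda^{s,t})
\]
in $L^{2}(\mathcal{B},\tau)$. Identity \eqref{eq:GenFun-plan} implies that $(z-\lambda)\mathscr{G}_{s,t}(p_{N})(z)\to 1=\mathscr{G}_{s,t}(1)$ in the $L^{2}_{\mathrm{hol}}(b_{s,t},\tau)$-norm, so the limit above equals $\mathscr{S}_{s,t}(1)=1$. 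Hence $(b_{s,t}-\lambda)\mathscr{S}_{s,t}(r_\lambda^{s,t})=1$, as required.

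The heart of the proof — and the main obstacle — is establishing \eqref{eq:GenFun-plan}. My approach is to expand $r_\lambda^{s,t}$ as a convergent geometric series in $\omega^{\pm 1}$: for $|f_{s,t}(\lambda)|\neq 1$,
\[
r_\lambda^{s,t}(\omega)=
\begin{cases}
-\lambda^{-1}\sum_{k\ge 0}f_{s,t}(\lambda)^{-k}\,\omega^{k}, & |f_{s,t}(\lambda)|>1,\\[4pt]
\phantom{-}\lambda^{-1}\sum_{k\ge 1}f_{s,t}(\lambda)^{k}\,\omega^{-k}, & |f_{s,t}(\lambda)|<1,
\end{cases}
\]
both converging uniformly on $\partial\mathbb{D}$. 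Applying $\mathscr{G}_{s,t}$ term by term reduces \eqref{eq:GenFun-plan} to a closed-form generating-function identity for $\sum_{k}\mathscr{G}_{s,t}(\omega^{k})(z)\,\xi^{-k-1}$ evaluated at $\xi=f_{s,t}(\lambda)$, in which case $\chi_{s,t}(\xi)=\lambda$. Such a formula is essentially the content of \cite[Proposition 13]{BianeJFA} in the one-parameter case, reflecting the fact that $\chi_{t}=f_{t}^{\langle-1\rangle}$ is the spectral subordination underlying the Hall transform; its two-parameter extension follows from the generating-function computations in \cite{DHKLargeN} and \cite[Section 5]{Ho}, using $\chi_{s,t}=f_{s-t}\circ\chi_{s}$.

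Finally, the borderline case $|f_{s,t}(\lambda)|=1$, which can occur only for $s<4$ (where $\mathrm{supp}(\nu_{s})$ is a proper sub-arc of $\partial\mathbb{D}$), is handled by continuity. The map $\lambda\mapsto r_\lambda^{s,t}$ is continuous from $\mathbb{C}\setminus\overline{\Sigma}_{s,t}$ into $L^{2}(\partial\mathbb{D},\nu_{s})$ (by uniform boundedness and joint continuity of the integrand on $\mathrm{supp}(\nu_{s})$), hence $\lambda\mapsto(b_{s,t}-\lambda)\mathscr{S}_{s,t}(r_\lambda^{s,t})$ is continuous into $L^{2}(\mathcal{B},\tau)$; since $\{\,|f_{s,t}(\lambda)|\neq 1\,\}$ is dense in each connected component of $\mathbb{C}\setminus\overline{\Sigma}_{s,t}$, the identity extends to all $\lambda$ in the domain.
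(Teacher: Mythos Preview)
Your approximation argument contains a circular step. You compute the limit of $(b_{s,t}-\lambda)\mathscr{S}_{s,t}(p_N)$ in two ways: first (correctly) as $(b_{s,t}-\lambda)\mathscr{S}_{s,t}(r_\lambda^{s,t})$, via boundedness of left multiplication; and second as $1$, by asserting that $(z-\lambda)\mathscr{G}_{s,t}(p_N)(z)\to 1$ ``in the $L^2_{\mathrm{hol}}(b_{s,t},\tau)$-norm''. But that second claim is precisely the statement you are trying to prove. The identity $\mathscr{G}_{s,t}(r_\lambda^{s,t})(z)=1/(z-\lambda)$ only yields convergence of $(z-\lambda)\mathscr{G}_{s,t}(p_N)(z)$ to $1$ locally uniformly on $\Sigma_{s,t}$, which is a strictly weaker topology than the $L^2_{\mathrm{hol}}(b_{s,t},\tau)$-norm; indeed, the whole point of the theorem is that the spectrum of $b_{s,t}$ need not lie inside $\Sigma_{s,t}$. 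Put differently: writing $(z-\lambda)\mathscr{G}_{s,t}(p_N)=\mathscr{G}_{s,t}(\tilde p_N)$ for some Laurent polynomial $\tilde p_N$, you would need $\tilde p_N\to 1$ in $L^2(\nu_s)$, but since $\mathscr{G}_{s,t}$ is not multiplicative there is no simple relation between $\tilde p_N$ and $p_N$ that delivers this.

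The paper closes this gap by reversing the direction of the computation. For $|\lambda|>\|b_{s,t}\|$ (respectively $0<|\lambda|<1/\|b_{s,t}^{-1}\|$) the inverse $(b_{s,t}-\lambda)^{-1}$ already exists as a \emph{bounded} operator, with a norm-convergent geometric series in positive (respectively negative) powers of $b_{s,t}$. One then applies the unitary $\mathscr{S}_{s,t}^{-1}$ term by term and invokes the generating function $\Pi$ for the \emph{inverse} polynomials $p_k^{s,t}=\mathscr{G}_{s,t}^{-1}(z^k)$ from \cite[Theorem~1.17]{DHKLargeN} to identify the result with the formula \eqref{rstFormula}. This proves $\mathscr{S}_{s,t}(r_\lambda^{s,t})=(b_{s,t}-\lambda)^{-1}$ on a nonempty open subset of each connected component of $\mathbb{C}\setminus\overline{\Sigma}_{s,t}$. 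The extension to all $\lambda$ then proceeds by analytic continuation: $\lambda\mapsto(b_{s,t}-\lambda)\mathscr{S}_{s,t}(r_\lambda^{s,t})$ is weakly holomorphic into $L^2$ and equals $1$ on an open set, hence identically. Your density-plus-continuity argument would also handle this last step; the essential missing idea is anchoring the computation in the \emph{ordinary} resolvent set of $b_{s,t}$, where the operator inverse is available from the outset.
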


Recall from Proposition \ref{twoParamDomains.prop} that $f_{s,t}(\lambda)$
is outside the support of $\nu_{s}$ for all $\lambda$ in the complement
of $\overline{\Sigma}_{s,t}$. Thus, $r_{\lambda}^{s,t}(\omega)$ is bounded and
is therefore a $\nu_s$--square integrable function of $\omega$ for all
$\lambda\in\mathbb{C}\setminus\overline{\Sigma}_{s,t}$.

When $\lambda=0$, we interpret $r_{\lambda}^{s,t}$ to be the limit of the
right-hand side of \eqref{rstFormula} as $\lambda$ approaches zero, which is
easily computed to be $r_{0}^{s,t}(\omega)=e^{t/2}\omega^{-1}$.

\begin{proof}
For the case $\lambda=0$, we compute that 
$\mathscr{G}_{s,t}(\omega^{-1})=e^{-t/2}z^{-1}$,
as may be verified from the behavior of $\mathscr{G}_{s,t}$
with respect to inversion \cite[Eq.\ (5.2)]{DHKLargeN} and the recursive
formula in \cite[Proposition 5.2]{DHKLargeN}. Thus, by definition, we have
$\mathscr{S}_{s,t}(\omega^{-1})=e^{-t/2}b_{s,t}^{-1}$ so that $\mathscr{S}%
_{s,t}(e^{t/2}\omega^{-1})=b_{s,t}^{-1}$, which is the $\lambda=0$ case of the
theorem. The subsequent calculations assume $\lambda\neq0$.

We start by considering large values of $\lambda$. For $\left\vert
\lambda\right\vert >\left\Vert b_{s,t}\right\Vert $, the element
$b_{s,t}-\lambda$ has a \textit{bounded} inverse, which may be computed as a
power series:%
\[
(b_{s,t}-\lambda)^{-1}=-\frac{1}{\lambda}(1-b_{s,t}/\lambda)^{-1}=-\frac
{1}{\lambda}\sum_{k=0}^{\infty}\lambda^{-k}(b_{s,t})^{k},
\]
with the series converging in the operator norm and thus also in the
noncommutative $L^{2}$ norm. Applying $\mathscr{S}_{s,t}^{-1}$ term by term
gives
\begin{align}
\mathscr{S}_{s,t}^{-1}\left(  (b_{s,t}-\lambda)^{-1}\right)  (\omega) &=
-\frac{1}{\lambda}\sum_{k=0}^{\infty}\lambda^{-k}p_{k}^{s,t}(\omega)\label{InverseResolve1}\\
& = -\frac{1}{\lambda}\left[  1+\Pi(s,t,\omega,1/\lambda)\right],
\label{InverseResolve2}%
\end{align}
where $p_{k}^{s,t}$ is the unique polynomial such that
\[
\mathscr{G}_{s,t}(p_{k}^{s,t})(z)=z^{k},\quad k\in\mathbb{Z},
\]
and where $\Pi$ is the generating function in \cite[Theorem 1.17]{DHKLargeN}.

Meanwhile, according to \cite[Eq.\ (1.21)]{DHKLargeN}, we have the following implicit formula for $\Pi$:
\[
\Pi(s,t,\omega,f_{s-t}(z))=\frac{1}{1-\omega f_{s}(z)}-1.
\]
(This result extends a formula of Biane in the $s=t$ case.) Then, at least for
sufficiently small $z$, we may replace $z$ by $\chi_{s-t}(z)$, where $\chi_{s,t}$ is the inverse function to $f_{s-t}$. This substitution gives
\begin{equation}
\Pi(s,t,\omega,z)=\frac{1}{1-\omega f_{s}(\chi_{s-t}(z))}-1. \label{PiFormula}%
\end{equation}
Plugging this expression into \eqref{InverseResolve2} gives%
\[
r_{\lambda}^{s,t}(\omega)=-\frac{1}{\lambda}\frac{1}{1-\omega f_{s}(\chi_{s-t}%
(1/\lambda))},
\]
for $\lambda$ sufficiently large, which
simplifies---using \eqref{chiProperty} and the identity $f_s(1/z)=1/f_s(z)$---to
the expression in \eqref{rstFormula}.

Similarly, for $0<\left\vert \lambda\right\vert <1/\left\Vert b_{s,t}%
^{-1}\right\Vert $, we use the series expansion%
\begin{align*}
(b_{s,t}-\lambda)^{-1}  &  =b_{s,t}^{-1}(1-\lambda b_{s,t}^{-1})^{-1}\\
&  =b_{s,t}^{-1}\sum_{k=0}^{\infty}\lambda^{k}b_{s,t}^{-k}\\
&  =\frac{1}{\lambda}\sum_{k=1}^{\infty}\lambda^{k}b_{s,t}^{-k}.
\end{align*}
Since $p_{-k}^{s,t}(\omega)=p_{k}^{s,t}(\omega^{-1})$ (see \cite[Eq. (5.2)]{DHKLargeN}),
we may apply $\mathscr{S}_{s,t}^{-1}$ term by term to obtain
\begin{align*}
\mathscr{S}_{s,t}^{-1}\left(  (b_{s,t}-\lambda)^{-1}\right)  (\omega)  &  =\frac
{1}{\lambda}\sum_{k=1}^{\infty}\lambda^{k}p_{k}^{s,t}(\omega^{-1})\\
&  =\frac{1}{\lambda}\Pi(s,t,\omega^{-1},\lambda).
\end{align*}
Using the formula \eqref{PiFormula} for $\Pi$ (for sufficiently small
$\lambda$) and simplifying gives the same expression as for the large-$\lambda
$ case.

Finally, for general $\lambda\in\mathbb{C}\setminus\overline{\Sigma}_{s,t}$ we use
an analytic continuation argument. The complement of the closure of
$\Sigma_{s,t}$ has at most two connected components, the unbounded component
and, for $s\geq4$, a bounded component containing 0. Now, for all $\lambda
\in\mathbb{C}\setminus\overline{\Sigma}_{s,t}$, the function $r_{\lambda}^{s,t}$ in
\eqref{rstFormula} is a well-defined element of $L^{2}(\partial\mathbb{D},\nu_{s})$,
because (by Proposition \ref{twoParamDomains.prop}) $f_{s,t}$ maps $\mathbb{C}\setminus\overline{\Sigma}_{s,t}$ into
$\mathbb{C}\setminus\mathrm{supp}\,\nu_{s}$. Furthermore, it is evident from \eqref{rstFormula} that  $r_{\lambda}^{s,t}$
is a weakly holomorphic function of
$\lambda\in\mathbb{C}\setminus\overline{\Sigma}_{s,t}$, with values in
$L^{2}(\partial\mathbb{D},\nu_{s})$, meaning that $\lambda\mapsto \phi(r_{\lambda}^{s,t})$
is a holomorphic for each bounded linear functional on $L^{2}(\partial\mathbb{D},\nu_{s})$.

Thus, since $\mathscr{S}_{s,t}$ is unitary (hence bounded), $\mathscr{S}_{s,t}(r_{\lambda}^{s,t})$ is a weakly holomorphic function
of $\lambda\in\mathbb{C}\setminus\overline{\Sigma}_{s,t}$, with values in
$L_{\mathrm{hol}}^{2}(\mathcal{B},\tau)$. It is then easy to see that
\begin{equation}
(b_{s,t}-\lambda)\mathscr{S}_{s,t}(r_{\lambda}^{s,t})=b_{s,t}\mathscr{S}_{s,t}(r_{\lambda}^{s,t})-\lambda\mathscr{S}_{s,t}(r_{\lambda}^{s,t})
\label{holomorphicExpression}%
\end{equation}
is also weakly holomorphic. After all, applying a bounded linear functional $\phi$ gives
\begin{equation}
\phi(b_{s,t}\mathscr{S}_{s,t}(r_{\lambda}^{s,t}))-\lambda\phi(\mathscr{S}_{s,t}(r_{\lambda}^{s,t})).
\label{twoTermsHolomorphic}
\end{equation}
Since multiplication by $b_{s,t}$ is a bounded linear map on $L_{\mathrm{hol}}^{2}(\mathcal{B},\tau)$, the linear functional $\phi(b_{s,t}\cdot)$ is bounded, so both terms in \eqref{twoTermsHolomorphic} are holomorphic in $\lambda$.

Meanwhile, we
have shown that \eqref{holomorphicExpression} is equal to 1 on a nonempty,
open subset of each connected component of
$\mathbb{C}\setminus\overline{\Sigma}_{s,t}$. Since, also, \eqref{holomorphicExpression} is weakly holomorphic, it must be equal to 1 on all of $\mathbb{C}\setminus\overline{\Sigma}_{s,t}$.
\end{proof}

We emphasize that, although for very large and small $\lambda$ the standard
power-series argument gives an inverse of $b_{s,t}-\lambda$ in the algebra
$\mathcal{B}$, the analytic continuation takes place in $L_{\mathrm{hol}}%
^{2}(\mathcal{B},\tau)$, which is the range of the transform $\mathscr{S}%
_{s,t}$. Thus, for general $\lambda\in\mathbb{C}\setminus\overline{\Sigma}_{s,t}$,
we are guaranteed that $b_{s,t}-\lambda$ has an inverse in $L^{2}$, but not
necessarily in $\mathcal{B}$.

\begin{corollary}
For all $\lambda\in\mathbb{C}\setminus\overline{\Sigma}_{s,t}$ and all positive
integers $n$, the operator $(b_{s,t}-\lambda)^{n}$ has an inverse
$(b_{s,t}-\lambda)^{-n}$ in $L^{2}(\mathcal{B},\tau)$. Specifically,%
\begin{equation}
(b_{s,t}-\lambda)^{-n}=\mathscr{S}_{s,t}\left(\frac{1}{(n-1)!}\left(\frac{\partial}{\partial\lambda}\right)  ^{n-1}r_{\lambda}^{s,t}\right),
\label{InverseN}%
\end{equation}
where $r_{\lambda}^{s,t}$ is as in \eqref{rstFormula}. Furthermore,
$\left\Vert (b_{s,t}-\lambda)^{-n}\right\Vert_{L^2(\mathcal{B},\tau)} $ is locally bounded on
$\mathbb{C}\setminus\overline{\Sigma}_{s,t}$.
\end{corollary}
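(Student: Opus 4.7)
The plan is to build the inverse explicitly, extending Theorem \ref{L2inv.thm} (which handles $n=1$) by differentiating in $\lambda$ inside the $L^2$-topology. On the scalar side, successively differentiating $(z-\lambda)^{-1}$ in $\lambda$ produces $k!(z-\lambda)^{-(k+1)}$; the strategy is to show that the identical relationship holds in $L^2(\mathcal{B},\tau)$ for the operator $b_{s,t}$, using the two-parameter free Hall transform $\mathscr{S}_{s,t}$ as a conduit.

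First I would verify that $\lambda\mapsto r_\lambda^{s,t}$ is holomorphic from $\mathbb{C}\setminus\overline{\Sigma}_{s,t}$ into $L^2(\partial\mathbb{D},\nu_s)$, in fact into $L^\infty(\partial\mathbb{D},\nu_s)$. By Proposition \ref{twoParamDomains.prop}, $f_{s,t}$ is a holomorphic bijection from $\mathbb{C}\setminus\overline{\Sigma}_{s,t}$ onto $\mathbb{C}\setminus\mathrm{supp}\,\nu_s$, so for $\lambda$ in any compact subset of $\mathbb{C}\setminus\overline{\Sigma}_{s,t}$, the denominator $\omega-f_{s,t}(\lambda)$ in \eqref{rstFormula} is bounded away from zero uniformly in $\omega\in\mathrm{supp}\,\nu_s$, and depends holomorphically on $\lambda$ in the $L^\infty$-norm. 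By Cauchy's formula for Banach-space-valued holomorphic functions, every derivative $\partial_\lambda^{k}r_\lambda^{s,t}$ is again holomorphic and locally bounded in $L^2(\partial\mathbb{D},\nu_s)$. Define
\[
T_n(\lambda):=\mathscr{S}_{s,t}\!\left(\tfrac{1}{(n-1)!}\partial_\lambda^{n-1}r_\lambda^{s,t}\right)\in L_{\mathrm{hol}}^2(b_{s,t},\tau).
\]
Since $\mathscr{S}_{s,t}$ is unitary (hence bounded), it commutes with $\lambda$-differentiation, so $T_n(\lambda)=\tfrac{1}{(n-1)!}\partial_\lambda^{n-1}\mathscr{S}_{s,t}(r_\lambda^{s,t})$ is a holomorphic $L^2(\mathcal{B},\tau)$-valued function, with $T_1(\lambda)=(b_{s,t}-\lambda)^{-1}$ by Theorem \ref{L2inv.thm}.

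Then I would prove by induction on $n$ that $(b_{s,t}-\lambda)^nT_n(\lambda)=T_n(\lambda)(b_{s,t}-\lambda)^n=1$, giving the formula \eqref{InverseN}. The base case is Theorem \ref{L2inv.thm}. For the inductive step, use that left and right multiplication by the bounded operator $(b_{s,t}-\lambda)^n\in\mathcal{B}$ act as bounded linear maps on $L^2(\mathcal{B},\tau)$ (via \eqref{associativity1}), so they commute with the complex derivative $\partial_\lambda$. Differentiating $(b_{s,t}-\lambda)^nT_n(\lambda)=1$ and noting $\partial_\lambda T_n(\lambda)=nT_{n+1}(\lambda)$ yields, after the product rule and multiplying on the left by $(b_{s,t}-\lambda)$,
\[
(b_{s,t}-\lambda)^{n+1}T_{n+1}(\lambda)=(b_{s,t}-\lambda)^nT_n(\lambda)=1;
\]
a symmetric computation applied to $T_n(\lambda)(b_{s,t}-\lambda)^n=1$ and followed by right multiplication by $(b_{s,t}-\lambda)$ gives the two-sided relation. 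Local boundedness of $\|(b_{s,t}-\lambda)^{-n}\|_{L^2(\mathcal{B},\tau)}=\tfrac{1}{(n-1)!}\|\partial_\lambda^{n-1}r_\lambda^{s,t}\|_{L^2(\partial\mathbb{D},\nu_s)}$ is then immediate from the unitarity of $\mathscr{S}_{s,t}$ together with the Cauchy bound from the first step.

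The main obstacle is essentially bookkeeping rather than conceptual: justifying that $\lambda$-differentiation commutes cleanly with each of the linear operations in play (application of $\mathscr{S}_{s,t}$, left multiplication, right multiplication), all in the $L^2$-topology where products of $L^2$ elements need not stay in $L^2$. This is handled by the fact that every map involved is either unitary or is left/right multiplication by a bounded element of $\mathcal{B}$, and any bounded linear map between Banach spaces commutes with Banach-space-valued holomorphic differentiation, so the formal manipulations of the scalar calculus transfer verbatim provided one works consistently in $L^2(\mathcal{B},\tau)$.
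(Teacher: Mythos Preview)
Your proposal is correct and follows the same inductive scheme as the paper: both prove the $n=1$ case via Theorem \ref{L2inv.thm} and then pass from $n$ to $n+1$ by differentiating in $\lambda$ through the unitary $\mathscr{S}_{s,t}$. The only difference is packaging: the paper carries out the inductive step by explicit difference quotients and a binomial expansion of $(b_{s,t}-\lambda)^n=(b_{s,t}-(\lambda+h)+h)^n$, whereas you invoke the Banach-space product rule on the identity $(b_{s,t}-\lambda)^nT_n(\lambda)=1$ and Cauchy estimates for the local bounds, which is a slightly cleaner route to the same conclusion.
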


\begin{proof}
We let%
\[
r_{\lambda}^{s,t,n}(\omega)=\frac{1}{(n-1)!}\left(  \frac{\partial}{\partial
\lambda}\right)  ^{n-1}r_{\lambda}^{s,t}(\omega).
\]
If we inductively compute the derivatives in the definition of $r_{\lambda}^{s,t,n}(\omega)$, we will find that the result is polynomial in $1/(\omega-f_{s,t}(\lambda))$, with coefficients that are holomorphic functions of $\lambda\in\mathbb{C}\setminus\overline{\Sigma}_{s,t}$. Thus, for each $n$, the quantity $r_{\lambda}^{s,t,n}(\omega)$ is jointly continuous as a function of $\omega\in\mathrm{supp}(\nu_t)$ and $\lambda\in\mathbb{C}\setminus\overline{\Sigma}_{s,t}$. Thus, $\| r_{\lambda
}^{s,t,n}\|_{L^{2}(\partial\mathbb{D},\nu_{s})}$ is finite and depends continuously on $\lambda$. Thus, once \eqref{InverseN} is verified, the
local bounds on the norm of $(b_{s,t}-\lambda)^{-n}$ will follow from the
unitarity of $\mathscr{S}_{s,t}$.

We establish \eqref{InverseN} by induction on~$n$, the $n=1$ case being the
content of Theorem \ref{L2inv.thm}. Assume, then, the result for a fixed $n$ and recall that $r_{\lambda}^{s,t,n}(\omega)$ is a polynomial in $1/(\omega-f_{s,t}(\lambda))$, with coefficients that are holomorphic functions of $\lambda$. It is then an elmentary matter to see that for each fixed $\lambda\in\mathbb{C}\setminus\overline{\Sigma}_{s,t}$, the limit
\[
\lim_{h\rightarrow0}\frac{r_{\lambda+h}^{s,t,n}(\omega)-r_{\lambda}^{s,t,n}(\omega)}{h}%
\]
exists as a uniform limit on $\mathrm{supp}\,\nu_{s}$, and thus also in
$L^{2}(\partial\mathbb{D},\nu_{s})$. Applying $\mathscr{S}_{s,t}$ and using our induction
hypothesis gives%
\begin{equation} \label{stageN.0}
\mathscr{S}_{s,t}\left(  \frac{\partial}{\partial\lambda}r_{\lambda}%
^{s,t,n}(\omega)\right)  =\lim_{h\rightarrow0}\frac{1}{h}[(b_{s,t}-(\lambda
+h))^{-n}-(b_{s,t}-\lambda)^{-n}],
\end{equation}
where the limit is in $L_{\mathrm{hol}}^{2}(b_{s,t},\tau)$.

We now multiply both sides of \eqref{stageN.0} by $(b_{s,t}-\lambda)^{n+1}$,
which we write as $(b_{s,t}-\lambda)(b_{s,t}-\lambda)^{n}$. Since
multiplication by an element of $\mathcal{B}$ is a continuous linear map on
$L^{2}(\mathcal{B},\tau)$, we obtain%
\begin{align}
&  (b_{s,t}-\lambda)^{n+1}\mathscr{S}_{s,t}\left(  \frac{\partial}%
{\partial\lambda}r_{\lambda}^{s,t,n}(\omega)\right) \nonumber\\
&  =\lim_{h\rightarrow0}(b_{s,t}-\lambda)\frac{1}{h}[(b_{s,t}-\lambda
)^{n}(b_{s,t}-(\lambda+h))^{-n}-1]. \label{stageN}%
\end{align}
In the first term on the right-hand side of \eqref{stageN}, we write%
\begin{equation}
(b_{s,t}-\lambda)^{n}=(b_{s,t}-(\lambda+h)+h)^{n}=\sum_{k=0}^{n}\binom{n}%
{k}(b_{s,t}-(\lambda+h))^{n-k}h^{k}. \label{binomial}%
\end{equation}
Upon substituting \eqref{binomial} into \eqref{stageN}, the $k=0$ term cancels
with the existing term of $-1$, while the $k=1$ term gives%
\begin{equation}
(b_{s,t}-\lambda)\frac{1}{h}\cdot nh(b_{s,t}-(\lambda+h))^{-1}=n(b_{s,t}%
-\lambda)(b_{s,t}-(\lambda+h))^{-1}. \label{kEqualsOne}%
\end{equation}
If we again write $(b_{s,t}-\lambda)=(b_{s,t}-(\lambda+h)+h)$, we see that
\eqref{kEqualsOne} tends to $n\cdot1$ as $h\to0$. Finally, all terms
with $k\geq2$ will vanish in the limit, leaving us with%
\[
(b_{s,t}-\lambda)^{n+1}\mathscr{S}_{s,t}\left(  \frac{\partial}{\partial
\lambda}r_{\lambda}^{s,t,n}(\omega)\right)  =n\cdot1.
\]
Thus,%
\[
\mathscr{S}_{s,t}\left(  \frac{1}{n}\frac{\partial}{\partial\lambda}%
r_{\lambda}^{s,t,n}(\omega)\right)  =(b_{s,t}-\lambda)^{-(n+1)},
\]
which is just the level-$(n+1)$ case of the corollary.
\end{proof}

\subsection*{Acknowledgments} We would like to thank Adrian Ioana and Ching Wei Ho for useful conversations.  We also thank Maciej Nowak for making us aware of the papers \cite{Nowak,Lohmayer} and for many illuminating discussions about them.  Finally, we thank the anonymous referee for carefully reading our manuscript and, in particular, for suggesting additions that improved the discussion of $L^p_n$-spectrum.


\begin{thebibliography}{99}                                                                                               %
                                                                                             %
\bibitem {Bai}Z. D. Bai, Circular law, \textit{Ann. Probab.} \textbf{25}
(1997), 494--529.

\bibitem {BV}H. Bercovici and D. Voiculescu, L\'{e}vy--Hin\v{c}in type
theorems for multiplicative and additive free convolution, \textit{Pacific J.
Math.} \textbf{153} (1992), 217--248.

\bibitem {Bh}R. Bhatia, Matrix analysis. Graduate Texts in Mathematics, 169.
Springer-Verlag, New York, 1997.

\bibitem {BianeFields}P. Biane, \textquotedblleft Free Brownian motion, free
stochastic calculus and random matrices.\textquotedblright\ \textit{In}\ Free
Probability Theory (Waterloo, ON, 1995), 1--19. Fields Institute
Communications 12. Providence, RI: American Mathematical Society, 1997.

\bibitem {BianeJFA}P. Biane, Segal--Bargmann transform, functional calculus on
matrix spaces and the theory of semi-circular and circular systems, \textit{J.
Funct. Anal.}, \textbf{144} (1997), 232--286.

\bibitem {BianeSubordination}P. Biane, Processes with free increments,
\textit{Math. Z.} \textbf{227} (1998), 143--174.

\bibitem {BS1} P. Biane and R. Speicher, Stochastic calculus with respect to
free Brownian motion and analysis on Wigner space, \textit{Probab. Theory
Related Fields} \textbf{112} (1998), 373--409.

\bibitem {BS2} P. Biane and R. Speicher, Free diffusions, free entropy and
free Fisher information, \textit{Ann. Inst. H. Poincar\'{e} Probab. Statist.}
\textbf{37} (2001), 581--606.

\bibitem {Blackadar} B. Blackadar, Operator algebras.  Theory of $C^\ast$-algebras and von Neumann algebras, \textit{Encyclopedia of Mathematical Sciences} \textbf{122}.  Operator Algebras and Non-commutative Geometry, III.  Springer-Verlag, Berlin, 2006.

\bibitem {Br}Brown, L. G. Lidski\u{\i}'s theorem in the type II case. \textit{In}
Geometric methods in operator algebras (Kyoto, 1983), 1--35, Pitman Res. Notes
Math. Ser., 123, Longman Sci. Tech., Harlow, 1986.

\bibitem {Ceb}G. C\'{e}bron, Free convolution operators and free Hall
transform, \textit{J. Funct. Anal.} \textbf{265} (2013), 2645--2708.

\bibitem{CK}B. Collins and T. Kemp, Liberation of projections, \textit{J. Funct. Anal.} \textbf{266} (2014), 1988--2052.

\bibitem {DH}B. K. Driver and B. C. Hall, Yang--Mills theory and the
Segal--Bargmann transform, \textit{Comm. Math. Phys.} \textbf{201} (1999), 249--290.

\bibitem {DHKLargeN}B. K. Driver, B. C. Hall, and T. Kemp, The large-$N$ limit
of the Segal--Bargmann transform on $\mathrm{U}(N)$, \textit{J. Funct. Anal.}
\textbf{265} (2013), 2585--2644.

\bibitem {DHK-Brown-Measure}B. K. Driver, B.C. Hall, and T. Kemp, The Brown measure
of the free multiplicative Brownian motion, \textit{Preprint} arXiv:1903.11015

\bibitem {FK1}B. Fuglede and R. V. Kadison, On determinants and a property of
the trace in finite factors, \textit{Proc. Nat. Acad. Sci. U. S. A.}
\textbf{37} (1951), 425--431.

\bibitem {FK2}B. Fuglede and R. V. Kadison, Determinant theory in finite
factors, \textit{Ann. of Math. (2)} \textbf{55} (1952), 520--530.

\bibitem {Gin}J. Ginibre, Statistical ensembles of complex, quaternion, and real matrices,
\textit{J. Math. Phys.} \textbf{6} (1965), 440--449.

\bibitem {Girko}V. L. Girko, The circular law. (Russian) \textit{Teor.
Veroyatnost. i Primenen.} \textbf{29} (1984), 669--679.

\bibitem {GT}F. G\"{o}tze and A. Tikhomirov, The circular law for random
matrices, \textit{Ann. Probab.} \textbf{38} (2010), 1444--1491.

\bibitem {GM}L. Gross and P. Malliavin, Hall's transform and the
Segal--Bargmann map. \textit{In} It\^{o}'s stochastic calculus and probability
theory (N. Ikeda, S. Watanabe, M. Fukushima and H. Kunita, Eds.), 73--116,
Springer, 1996.

\bibitem{HaagerupLarsen} U. Haagerup and F. Larsen, Brown's spectral distribution measure for $R$-diagonal elements in finite von Neumann algebras, \textit{J. Funct. Anal.} \textbf{176} (2000), 331--367.

\bibitem {Ha1994}B. C. Hall, The Segal--Bargmann \textquotedblleft coherent
state\textquotedblright\ transform for compact Lie groups. \textit{J. Funct.
Anal.} \textbf{122} (1994), 103--151.

\bibitem {Ha1999}B. C. Hall, A new form of the Segal--Bargmann transform for
Lie groups of compact type, \textit{Canad. J. Math.} \textbf{51} (1999), 816--834.

\bibitem {Hall2001}B. C. Hall, Harmonic analysis with respect to heat kernel
measure, \textit{Bull. Amer. Math. Soc. (N.S.)} \textbf{38} (2001), 43--78.

\bibitem {Halll2003}B. C. Hall, The Segal--Bargmann transform and the Gross ergodicity
theorem. \textit{In} Finite and infinite dimensional analysis in honor of
Leonard Gross (New Orleans, LA, 2001), 99--116, Contemp. Math., 317, Amer.
Math. Soc., Providence, RI, 2003.

\bibitem {HS}B. C. Hall and A. N. Sengupta, The Segal--Bargmann transform for
path-groups, \textit{J. Funct. Anal.} \textbf{152} (1998), 220--254.

\bibitem {HallExpository}B. C. Hall, The Segal--Bargmann transform for unitary
groups in the large-$N$ limit, preprint arXiv:1308.0615 [math.RT].

\bibitem{Ho}C.-W. Ho, The two-parameter free unitary Segal--Bargmann
transform and its Biane--Gross--Malliavin identification, \textit{J. Funct.
Anal.} \textbf{271} (2016), 3765--3817.

\bibitem {KempLargeN}T. Kemp, The large-$N$ limits of Brownian motions on
$GL(N)$, \textit{Int. Math. Res. Not.}, (2016), 4012--4057.

\bibitem {KNPS} T. Kemp, I. Nourdin, G. Peccati, and R. Speicher, Wigner chaos and the fourth moment.
\textit{Ann. Prob.} {\textbf 40} (2012), 1577--1635.

\bibitem{Lohmayer} R. Lohmayer, H. Neuberger, and T. Wettig, Possible large-%
\textit{N} transitions for complex Wilson loop matrices, \textit{J. High
Energy Phys.} 2008, no. 11, 053, 44 pp.

\bibitem {McKean} H. P. McKean, Stochastic Integrals.  Probability and Mathematical Statistics 5.
New York: Academic Press, 1969

\bibitem {MS}J. A. Mingo and R. Speicher, Free probability and random
matrices. Fields Institute Monographs, 35. Springer, New York; Fields
Institute for Research in Mathematical Sciences, Toronto, ON, 2017.

\bibitem {NS}A. Nica and R. Speicher, Lectures on the combinatorics of free
probability. London Mathematical Society Lecture Note Series, 335. Cambridge
University Press, Cambridge, 2006. xvi+417 pp.

\bibitem{Nowak} E. Gudowska-Nowak, R. A. Janik, J. Jurkiewicz, and M. A.
Nowak, Infinite products of large random matrices and matrix-valued
diffusion, \textit{Nuclear Phys. B} \textbf{670} (2003), 479--507.

\bibitem {RS}M. Reed and B. Simon, Methods of modern mathematical physics. I.
Functional analysis. Second edition. Academic Press, Inc. [Harcourt Brace
Jovanovich, Publishers], New York, 1980. xv+400 pp.

\bibitem {Singer} I. M. Singer, On the master field in two dimensions. \textit{In}
Functional analysis on the eve of the 21st century, Vol. 1 (New Brunswick, NJ, 1993),
volume 131 of \textit{ Progr. Math.}, 263--281.  Birkh\"auser Boston, Boston, MA, 1995.

\bibitem {TV}T. Tao and V. Vu, Random matrices: universality of ESDs and the
circular law. With an appendix by Manjunath Krishnapur. \textit{Ann. Probab.}
\textbf{38} (2010), 2023--2065.

\bibitem{Voiculescu}D. V. Voiculescu, Limit laws for random matrices and free products.
\textit{Invent. Math.} \textbf{104} (1991), 201--220.


\end{thebibliography}
\end{document}